\definecolor{black}{rgb}{0.0, 0.0, 0.0}
\newcommand{\margnote}[1]{
\ifthenelse{\boolean{shownotes}}%
{\marginpar{\raggedright\tiny\texttt{#1}}}%
{}%
}
\newcommand{\hole}[1]{
\ifthenelse{\boolean{shownotes}}%
{\begin{center} \fbox{ \rule {.25cm}{0cm} \rule[-.1cm]{0cm}{.4cm}
\parbox{.85\textwidth}{\begin{center} \texttt{#1}\end{center}} \rule
{.25cm}{0cm}}\end{center}} {} }
\title[A hydrodynamic model for synchronization phenomena]{A hydrodynamic model for synchronization phenomena}
\author[Choi]{Young-Pil Choi}
\address[Young-Pil Choi]{\newline Department of Mathematics and Institute of Applied Mathematics\newline
Inha University, Incheon 22212, Republic of Korea}
\email{ypchoi@inha.ac.kr}
\author[Lee]{Jaeseung Lee}
\address[Jaeseung Lee]{\newline The Research Institute of Basic Sciences \newline Seoul National University, Seoul 08826, Republic of Korea}
\email{jaeseunglee@snu.ac.kr}
\subjclass{}
\keywords{}
\numberwithin{equation}{section}
\newtheorem{theorem}{Theorem}[section]
\newtheorem{lemma}{Lemma}[section]
\newtheorem{proposition}{Proposition}[section]
\newtheorem{remark}{Remark}[section]
\newcommand{\R}{\mathbb R}
\newcommand{\ls}{\lesssim}
\newcommand{\T}{\mathbb T}
\newcommand{\N}{\mathbb N}
\newcommand{\Z}{\mathbb Z}
\newcommand{\mc}{\mathcal C}
\newcommand{\bq}{\begin{equation}}
\newcommand{\eq}{\end{equation}}
\newcommand{\e}{\varepsilon}
\newcommand{\lt}{\left}
\newcommand{\rt}{\right}
\newcommand{\pa}{\partial}
\newcommand{\ml}{\mathcal{L}}
\newcommand{\me}{\mathcal{E}}
\newcommand{\mms}{\mathcal{S}}
\newcommand{\tot}{\theta,\Omega,t}
\newcommand{\tots}{\theta_*,\Omega_*,t}
\begin{document}
\allowdisplaybreaks

\date{\today}

\keywords{synchronization, pressureless Euler equations, phase transition, existence theory, hysteresis phenomena.}

\begin{abstract} We present a new hydrodynamic model for synchronization phenomena which is a type of pressureless Euler system with nonlocal interaction forces. This system can be formally derived from the Kuramoto model with inertia, which is a classical model of interacting phase oscillators widely used to investigate synchronization phenomena, through a kinetic description under the mono-kinetic closure assumption. For the proposed system, we first establish local-in-time existence and uniqueness of classical solutions. For the case of identical natural frequencies, we provide synchronization estimates under suitable assumptions on the initial configurations. We also analyze critical thresholds leading to finite-time blow-up or global-in-time existence  of classical solutions. In particular, our proposed model exhibits the finite-time blow-up phenomenon, which is not observed in the classical Kuramoto models, even with a smooth distribution function for natural frequencies. Finally, we numerically investigate synchronization, finite-time blow-up, phase transitions, and hysteresis phenomena.
\end{abstract}

\maketitle \centerline{\date}

\tableofcontents

%
%
%
%
\section{Introduction} 
The phenomenon of collective synchronization exhibited by various biological systems is ubiquitous in nature, and it has been extensively studied in many different scientific disciplines such as applied mathematics, physics, biology, sociology, and control theory due to their biological and engineering applications, \cite{ABPRS, BB, CS09, Erm, Kura, PLR, Str, Ward, Win}. The mathematical treatment of synchronization phenomena was pioneered by Winfree \cite{Win} and Kuramoto \cite{Kura}. Winfree first introduced a first-order model for collective synchronization of weakly coupled nonlinear oscillators. Subsequently, Kuramoto proposed a mathematically tractable model consisting of a population of coupled phase oscillators having natural frequencies extracted from a given distribution, and all of them are coupled by a mean-field interaction, sinusoidal coupling. The Kuramoto model contains all the main features of interest. In particular, the Kuramoto model displays a phase transition between coherent and incoherent states: the oscillators rotate on a circle incoherently when the coupling strength is weak enough, while the collective synchronization occurs when the coupling strength is beyond some threshold. Since then, the Kuramoto model has become a paradigmatic model for synchronization phenomena.

There already exist various extensions, such as additive/multiplicative noises, time-delayed coupling, inertia, frustration, and networks, extensively explored in \cite{BM16, CCP19, CHY1, CHY2, CHN, CL19, Erm, HKLZ, HLZ, TLO1, TLO2}. However, to the best of our knowledge, there is no available literature on hydrodynamic models for synchronization phenomena. In the current work, we present a new hydrodynamic model, which is a pressureless Euler-type system, for the synchronization phenomena. More specifically, let $\rho = \rho(\theta,\Omega,t)$ and $u = u(\theta,\Omega,t)$ be the density and velocity functions of Kuramoto oscillators, respectively, in $\theta \in \T := \R/(2\pi \Z)$ with a natural frequency $\Omega$ extracted from a given distribution function $g = g(\Omega)$ at time $t > 0$. Then our main system is given by
\begin{align}\label{hydro_Ku}
\begin{aligned}
&\pa_t \rho + \pa_\theta(\rho u) = 0, \quad (\theta,\Omega) \in \T \times \R, \quad t >0,\cr
&\pa_t (\rho u) + \pa_\theta(\rho u^2) = \frac1m \left( -\rho u +  \rho \Omega + K\rho\int_{\T \times \R} \sin(\theta_* - \theta)\rho(\theta_*,\Omega_*,t)g(\Omega_*)\,d\theta_* d\Omega_*\right),
\end{aligned}
\end{align}
with the initial data
\bq\label{ini_hydro_Ku}
(\rho,u)(\theta,\Omega,0) =: (\rho_0(\theta,\Omega),u_0(\theta,\Omega)), \quad (\theta,\Omega) \in \T \times \R.
\eq
Here $m > 0$ and $K >0$ denote the strength of inertia and coupling strength, respectively. 

The system \eqref{hydro_Ku} can be formally derived from the second-order system of ordinary differential equations for synchronization, called the Kuramoto model with inertia, through a kinetic description under a mono-kinetic closure assumption. More precisely, our starting point is the $N$-particle Kuramoto oscillators with inertia. Let $\theta_i \in \R$ be the phase of the $i$-th oscillator with the natural frequency $\Omega_i$. Then the dynamics of second-order Kuramoto oscillators is governed by the following system:
\bq\label{particle_Ku}
m\ddot{\theta}_i(t) + \dot{\theta}_i(t) = \Omega_i + \frac KN\sum_{j=1}^N \sin(\theta_j(t) - \theta_i(t)), \quad i=1,\cdots, N, \quad t >0.
\eq

The particle model \eqref{particle_Ku} is introduced in \cite{Erm} as a phenomenological model to describe the slow relaxation in the synchronization process in certain biological systems, e.g., fireflies of the Pteroptyx malaccae. Note that the classical Kuramoto model can be simply obtained by disregarding the inertial effect, i.e., setting $m = 0$. A different set of applications of the second-order phase model \eqref{particle_Ku} includes power grids, superconducting Josephson junction arrays, and explosive synchronization \cite{CL19, DB11, DB12, JPMRK13, WS94, WS97, WCS96}. Furthermore, it is known that the model \eqref{particle_Ku} exhibits rich phenomena such as the discontinuous phase transition and hysteretic dynamics \cite{BM16, TLO1, TLO2}. For mathematical results on \eqref{particle_Ku}, we refer to \cite{CHY1, CHM18, CHN, CLHXY14, DB12}.

On the other hand, when the number of oscillators $N$ is very large, the microscopic description \eqref{particle_Ku} is computationally complicated, and thus understanding how this complexity can be reduced is an important issue. The classical strategy to reduce this complexity is to derive a mesoscopic description, i.e., continuum model of the dynamics, by introducing a distribution function. Let $f = f(\theta,\omega,\Omega,t)$ be the one-oscillator distribution function on the space $\T \times \R$ with the natural frequency $\Omega$ at time $t$ and satisfy the normalized condition $\int_{\T \times \R}f(\theta,\omega,\Omega,t)\,d\theta d\omega = 1$. At the formal level, we can expect that as the number of oscillators $N$ goes to infinity, the $N$-particle system \eqref{particle_Ku} will be replaced by the following Vlasov-type equation:
\bq\label{kinetic_Ku}
\pa_t f + \pa_\theta(\omega f) +\pa_\omega (F[f]f) = 0, \quad (\theta,\omega,\Omega) \in \T \times \R \times \R, \quad t > 0,
\eq
where the interaction term $F[f]$ is given by
\[
F[f](\theta,\omega,\Omega,t) = \frac1m\lt( -\omega + \Omega + K\int_{\T \times \R \times \R}\sin(\theta_* - \theta) f(\theta_*,\omega_*,\Omega_*,t)g(\Omega_*)\,d\theta_*d\omega_*d\Omega_*\rt).
\]
The kinetic equation \eqref{kinetic_Ku} is often used in the physics literature to study the phase transition phenomena \cite{ABS00, AS98}. The rigorous derivation of the equation \eqref{kinetic_Ku} from \eqref{particle_Ku} is established in \cite{CHY2}. The global-in-time existence of measure-valued solutions and its long-time behavior are also studied in \cite{CHY2}. We refer to \cite{Do, Ne, Sp} for the rigorous derivation of kinetic equations.

Note that the mesoscopic description model \eqref{kinetic_Ku} is posed in $3+1$ dimensions, thus obtaining a numerical solution of \eqref{kinetic_Ku} is computationally expensive. For this reason, we next derive a macroscopic description model from \eqref{kinetic_Ku} by taking moments together with zero temperature closure or mono-kinetic assumption for the local hydrodynamic solutions. In this way, we can remove $\omega$-variable in solutions. For this, we first set local density $\rho$, moment $\rho u$, and energy $\rho E$, which is the sum of kinetic and internal energies:
\[
\rho(\theta,\Omega,t) = \int_\R f\,d\omega, \quad (\rho u)(\theta,\Omega,t) = \int_\R \omega f\,d\omega, 
\]
and
\[
(\rho E)(\theta,\Omega,t) = \frac12 \rho u^2 + \rho e, \quad \mbox{where} \quad \rho e = \frac12 \int_{\R} |\omega - u|^2 f\,d\omega.
\]
Then straightforward computations yield 
$$\begin{aligned}
\pa_t \rho &= -\pa_\theta\lt( \int_\R \omega f \,d\omega\rt) = -\pa_\theta(\rho u),\cr
\pa_t (\rho u) &= -\pa_\theta\lt( \int_\R \omega^2 f\,d\omega\rt) + \int_\R F[f]f\,d\omega\cr
&=-\pa_\theta(\rho u^2 + p) - \frac1m\lt(\rho u - \rho \Omega - \rho K\int_{\T \times \R} \sin(\theta_* - \theta)\rho(\theta_*,\Omega_*,t)g(\Omega_*)\,d\theta_*d\Omega_*\rt),
\end{aligned}$$
where we denote by $p$ the pressure given by
\[
p = \int_\R |\omega - u|^2 f\,d\omega,
\]
and we used 
\[
 \int_\R \omega^2 f\,d\omega = \int_\R u^2 f\,d\omega + \int_\R |\omega - u|^2 f\,d\omega = \rho u^2 + p.
\]
Since 
\[
\frac12\omega^3 = \frac12(\omega - u + u)^3 = \frac12(\omega- u)^3 + \frac32(\omega - u)^2 u + \frac32 (\omega- u) u^2 + \frac12 u^3,
\]
we also find
$$\begin{aligned}
\pa_t (\rho E) &= -\pa_\theta\lt(\frac12\int_\R \omega^3 f\,d\omega \rt) + \int_\R \omega F[f]f\,d\omega\cr
&= -\pa_\theta \lt(q + \frac{3pu}{2} + \frac12 \rho u^3 \rt) - \frac2m \rho E + \frac1m  \rho u \Omega \cr
&\quad + \frac Km \rho u \int_{\T \times \R} \sin(\theta_* - \theta)\rho(\theta_*,\Omega_*,t)g(\Omega_*)\,d\theta_*d\Omega_*.
\end{aligned}$$
Here we denote by $q$ the heat flux given by
\[
q = \frac12 \int_\R (\omega - u)^3 f\,d\omega.
\]
By collecting the above observations, we have the following local conservation laws:
\begin{align}\label{lcl}
\begin{aligned}
&\pa_t \rho + \pa_\theta(\rho u) = 0,\cr
&\pa_t (\rho u) + \pa_\theta(\rho u^2 + p) \cr
&\qquad = - \frac{\rho}{m}\lt(u + \Omega - K\int_{\T \times \R} \sin(\theta_* - \theta)\rho(\theta_*,\Omega_*,t)g(\Omega_*)\,d\theta_*d\Omega_* \rt),\cr
&\pa_t (\rho E) + \pa_\theta \lt(q + \frac{3pu}{2} + \frac12 \rho u^3 \rt) \cr
&\qquad = -\frac{\rho}{m}\lt(2E + u\Omega - K u\int_{\T \times \R} \sin(\theta_* - \theta)\rho(\theta_*,\Omega_*,t)g(\Omega_*)\,d\theta_*d\Omega_* \rt).
\end{aligned}
\end{align}
In order to close the local conservation laws \eqref{lcl}, we use the mono-kinetic closure assumption:
\[
f(\theta,\omega,\Omega,t) \simeq \rho(\theta,\Omega,t)\delta_{u(\theta,\Omega,t)}(\omega).
\]
Then this reduces to our main system \eqref{hydro_Ku}. Although the mono-kinetic assumption is not fully justified, it is known that the hydrodynamic system derived gives quantitative results comparable to the particle simulations, see \cite{CCP17, CCZ, CKM10}.

\begin{remark}We can also employ another closure assumption, for instance, a local Maxwellian-type ansatz, 
\[
f(\theta,\omega,\Omega,t) \simeq \rho(\theta,\Omega,t) \exp\lt(- \frac{(\omega - u(\theta,\Omega,t))^2}{2}\rt),
\]
then we derive the isothermal Euler-type equations from \eqref{lcl}.
\end{remark}
In the current work, we first establish local-in-time existence and uniqueness of classical solutions to the system \eqref{hydro_Ku}. For this, we consider the moving domain problem and reformulate our main system \eqref{hydro_Ku} into the Lagrangian coordinate. To be more precise, let us define the characteristic flow $\eta(\theta,\Omega,t)$ by
\bq\label{char}
\pa_t\eta(\theta,\Omega,t) = u(\eta(\theta,\Omega,t),\Omega, t) \quad \mbox{with} \quad \eta(\theta,\Omega,0) = \theta.
\eq
Set 
\[
h(\theta,\Omega,t) := \rho(\eta(\theta,\Omega,t),\Omega,t) \quad \mbox{and} \quad v(\theta,\Omega,t) := u(\eta(\theta,\Omega,t),\Omega,t),
\]
and let us denote by the time-varying set $\mms_t := \{ (\theta,\Omega) \in \T \times \R : \rho(\theta,\Omega,t) \neq 0 \}$ for given initially bounded open set $\mms_0$. Using these newly defined notations, we can rewrite the system \eqref{hydro_Ku} along the characteristic flow given in \eqref{char} as
\begin{align}\label{lag_Ku}
\begin{aligned}
&h(\theta,\Omega,t)\pa_\theta \eta(\theta,\Omega,t) = \rho_0(\theta,\Omega), \quad (\theta,\Omega) \in \mathcal S_0, \quad t>0,\cr
&m\pa_t v(\theta,\Omega,t) + v(\tot) \cr
&\qquad = \Omega  + K\int_{\T \times \R} \sin(\eta(\tots) - \eta(\tot))\rho_0(\theta_*,\Omega_*)g(\Omega_*)\,d\theta_* d\Omega_*,
\end{aligned}
\end{align}
with the initial data
\bq \label{init_lag}
(h, v)(\theta,\Omega,0) = (\rho_0,u_0)(\theta,\Omega), \quad (\theta,\Omega) \in \mms_0.
\eq

For the system \eqref{lag_Ku}, we introduce a weighted Sobolev space $H^s_g$ by the distribution function $g$ and construct a unique $H^s_g$-solution. This newly defined solution space together with our careful analysis allows us to apply directly our strategy for the identical case, i.e., the distribution function $g$ is the form of the Dirac measure on $\R$ giving unit mass to the point $\Omega_0$, $g(\Omega) = \delta_{\Omega_0}(\Omega)$ for some $\Omega_0 \in \R$. We construct the approximated solutions and provide that they are Cauchy sequences in the proposed weighted Sobolev spaces by obtaining uniform bound estimates of approximated solutions. We then show that the limiting functions are solutions to \eqref{lag_Ku}. The details of proof are discussed in Section \ref{sec_local_thm}. It is worth noticing that our system is a type of pressureless Euler equations with nonlocal forces, and it is well known that the pressureless Euler system may develop a singularity such as a $\delta$-shock in finite-time, i.e., fail to admit a global classical solution, no matter how smooth the initial data are. This is one of the main difficulties in analyzing the Euler equations. However, for the identical case, in general the case where the distribution function $g$ is a sum of Dirac measures, we expect that the density $\rho$ converges toward a Dirac measure, see Remark \ref{rmk_dirac}. This infers that the existence time of $H^s_g$-solutions cannot be infinity for general $g$. 

\begin{remark}
Since $\pa_t v = \pa_{tt}\eta$, we can also rewrite the momentum equation in \eqref{lag_Ku} as 
\begin{align}\label{eq_eta}
\begin{aligned}
&m\pa_{tt}\eta(\tot) + \pa_t\eta(\tot) \cr
&\quad = \Omega + K\int_{\T \times \R} \sin(\eta(\tots) - \eta(\tot))\rho_0(\theta_*,\Omega_*)g(\Omega_*)\,d\theta_* d\Omega_*,
\end{aligned}
\end{align}
subject to the initial data
\[
(\eta,\pa_t\eta)(\theta,\Omega,0) = (\theta,u_0(\theta,\Omega)),\quad (\theta,\Omega) \in \mms_0.
\]
Note that the equation \eqref{eq_eta} is a closed equation, i.e., the continuity equation in \eqref{lag_Ku} is decoupled from the equation for $v$. 
\end{remark}

After we construct the local-in-time existence of classical solutions, we discuss the synchronization estimate for the case of identical oscillators in Section \ref{sec_sync}. In this case, upon rotating frame if necessary, we may assume that $g(\Omega) = \delta_0(\Omega)$ and the system \eqref{hydro_Ku} reduces to
\begin{align}\label{hydro_iKu}
\begin{aligned}
&\pa_t \rho + \pa_\theta(\rho u) = 0, \quad \theta \in \T, \quad t >0,\cr
&\pa_t (\rho u) + \pa_\theta(\rho u^2) = \frac1m \left( -\rho u +  \rho \Omega + K\rho\int_\T \sin(\theta_* - \theta)\rho(\theta_*,t)\,d\theta_*\right).
\end{aligned}
\end{align}
For the system \eqref{hydro_iKu}, we present two different methods for the synchronization estimates. Inspired by a recent work \cite{CHM18}, in Section \ref{sec_energy}, we propose a strategy based on kinetic energy combined with order parameter $r$ estimates, where $r$ is defined by
\bq\label{order_pa}
r(t)e^{\mathrm i \varphi(t)} = \int_{\T \times \R} e^{\mathrm i\theta}\rho(\theta,\Omega,t)g(\Omega)\,d\theta d\Omega,
\eq
i.e.,
\[
r(t) = \int_{\T \times \R} \cos(\theta - \varphi(t))\rho(\theta,\Omega,t)g(\Omega)\,d\theta d\Omega.
\]
Here $\varphi$ represents the average phase associated to the system \eqref{hydro_Ku}. Note that the order parameter $r$ is employed to measure the phase transition from incoherent to coherent states mentioned above. To be more specific, $r^\infty := \lim_{t \to \infty} r(t)$ as a function of the coupling strength $K$, i.e., $r^\infty = r^\infty(K)$, changes from zero (incoherent state or disordered state) to a non-zero value (coherent state or ordered state) when the coupling strength $K$ exceeds a critical value $K_c$. It is known that the critical coupling strength is $K_c = 2/(\pi g(0))$ for the classical Kuramoto model \eqref{particle_Ku} with $m=0$ when $g$ is unimodal and symmetric about $\Omega=0$. Our synchronization estimate provides the convergences of the velocity toward the mean velocity and the order parameter to some positive constant under  general assumptions on the initial configurations, this strategy can be applied for the case $\mms_0 = \T \times \R$, see Remark \ref{rmk_inic} and Theorem \ref{thm_gene}. However, it does not give any information about the decay rate of convergence and the limit profiles $\rho^\infty := \lim_{t \to +\infty} \rho$. 
\begin{remark}
Using newly defined notations in \eqref{order_pa}, we can rewrite the momentum equation in \eqref{hydro_Ku} as
\[
\pa_t (\rho u) + \pa_\theta(\rho u^2) = -\frac\rho m \lt( u -  \Omega - K r(t) \sin(\varphi(t) -\theta)\rt).
\]
For the system \eqref{lag_Ku}, the order parameter $r$ and the average phase $\varphi$ can be expressed by
\[
r(t) e^{\mathrm i\varphi(t)} = \int_{\T \times \R} e^{\mathrm i\eta(\theta,\Omega,t)}\rho_0(\theta,\Omega)g(\Omega)\,d\theta d\Omega,
\]
and the equation of velocity in \eqref{lag_Ku} can be rewritten as 
\begin{equation} \label{lag_order_form}
m\pa_t v(\theta,\Omega,t) + v(\tot) = \Omega + Kr(t)\sin(\varphi(t) - \eta(\theta,\Omega,t)).
\end{equation}
\end{remark}
In order to complement the drawbacks of the previous strategy, in Section \ref{sec_pv}, we provide a second-order Gr\"onwall-type inequality estimates on phase and velocity diameters for the synchronization. Note that the equation for velocity in \eqref{lag_Ku} is an integro-differential equation, and the equation \eqref{eq_eta} resembles the particle Kuramoto model with inertia \eqref{particle_Ku}. In view of this fact, we use the idea of \cite{CHY1} and estimate the phase and velocity diameters to show the exponential synchronization behavior under certain assumptions on the initial data. Although this approach requires more restricted class of initial data than that in the previous approach, it gives decay rates of convergences and shows the limit profiles $\rho^\infty$ is the form of Dirac measure, see Remark \ref{rmk_dirac}. 

In Section \ref{sec_cri}, we show that our main system \eqref{hydro_Ku} exhibits critical threshold phenomena. For this, we first study the local-in-time well-posedness of the system \eqref{hydro_Ku} if the initial data are sufficiently regular and the initial density has no vacuum. Then, we analyze critical thresholds determining regions of initial conditions for global-in-time existence and finite-time blow-up of solutions to the system \eqref{hydro_Ku}. More precisely, we provide thresholds between the supercritical region with finite-time breakdown and the subcritical region with global-in-time existence of the classical solutions. The critical threshold phenomenon for Eulerian dynamics is studied in \cite{ELT01,LT02,TW08} for Euler-Poisson equations and \cite{CCTT,TT14} for pressureless Euler equations with nonlocal velocity alignment forces. We want to emphasize that the finite-time blow-up of solutions cannot be observed in the Kuramoto model with inertia at both microscopic level \eqref{particle_Ku} and mesoscopic level \eqref{kinetic_Ku}.  As mentioned above, it is an important issue for the global existence of solutions to the Euler-type equations how to prevent the formation of singularity. However, this implies that our hydrodynamic model \eqref{hydro_Ku} may describe the finite-time synchronization phenomena \cite{LT03, YWC13} commonly found in some natural networks, see also Remark \ref{rem_comm_cri}. We investigate the supercritical region for the system \eqref{hydro_Ku} so that the classical solution will blow up in finite time if its initial data belong to that region. On the other hand, we show that if the initial data is in the subcritical region, then the initial regularity of solutions is preserved in time. These results are stated in Theorem \ref{thm_cri}. 

Numerical experiments validating our theoretical results and giving further insights are presented in Section \ref{sec_numer}. We employ a finite-volume type scheme for the numerical simulations. We use initial data and parameters lying in sub/supercritical regions to illustrate the time evolution of solutions $\rho$ and $u$. The numerical simulations show that the finite-time blow-up of solutions may imply that the formation of finite-time synchronization, see Figures \ref{fig_id}, \ref{fig_nid}, \ref{fig_nidsup}, and \ref{fig_nidsup2}. It is also very interesting that our main system \eqref{hydro_Ku} also exhibits the hysteresis phenomenon. Depending on the strength of $m$, we show different types of phase transitions of the order parameter $r^\infty(K)$, see Figure \ref{fig_hyst}. We would like to emphasize that our hydrodynamic model \eqref{hydro_Ku} is much more efficient than the $N$-particle system \eqref{particle_Ku} in terms of computational cost when $N$ is large. We finally summarize our main results and report future research directions in the last section.

Before closing this section, we introduce several notations used throughout the paper. For a function $f(\theta, \Omega)$, $\|f\|_{L^p}$ denotes the usual $L^p(\T \times \R)$-norm. $L^p_g(\T \times \R)$ represents the space of weighted measurable functions whose $p$-th powers weighted by $g = g(\Omega)$ are integrable on $\T \times \R$, with the norm
\[
\|f\|_{L^p_g} := \lt(\int_{\T \times \R} f^p(\theta,\Omega) g(\Omega)\,d\theta d\Omega\rt)^{1/p}.
\]
$f \ls g$ implies that there exists a positive constant $C>0$ such that $f \leq Cg$. We also denote by $C$ a generic, not necessarily identical, positive constant. For any nonnegative integer $s$, $H^s$ and $H^s_g$ denote the $s$-th order $L^2$ and $L^2_g$ Sobolev spaces, respectively.   $\mc^s([0,T];E)$ is the set of $s$ times continuously differentiable functions from an interval $[0,T] \subset \R$ into a Banach space $E$, and $L^p(0,T;E)$ is the set of $L^p$ functions from an interval $(0,T)$ to a Banach space $E$.

Throughout this paper, we assume that the distribution function $g = g(\Omega)$ satisfies
\bq\label{condi_g}
\int_\R g(\Omega)\,d\Omega = 1 \quad \mbox{and} \quad \int_\R \Omega^2 g(\Omega)\,d\Omega < \infty.
\eq
Note that the case of identical oscillators, $g(\Omega) = \delta_{\Omega_0}$ for some $\Omega_0 \in \R$, satisfies the above conditions \eqref{condi_g}. Furthermore, we assume that the initial density $\rho_0$ satisfies
\bq\label{condi_ir}
\int_\T \rho_0(\theta,\Omega)\,d\theta = 1 \quad \mbox{for} \quad \Omega \in \R.
\eq

%
%
%
%
\section{Preliminaries}\setcounter{equation}{0}
In this section, we present {\it a priori} energy estimates and some useful lemmas which will be frequently used later.
\subsection{A priori estimates}
We first provide {\it a priori} energy estimates for the system \eqref{hydro_Ku}.
\begin{lemma}\label{lem_energy} Let $(\rho,u)$ be a global classical solution to the system \eqref{hydro_Ku}. Then we have
$$\begin{aligned}
&(i)\,\,\,\,\,\, \frac{d}{dt}\int_{\T \times \R} \rho(\tot)g(\Omega)\,d\theta d\Omega = 0,\cr
&(ii)\,\,\,\, \frac{d}{dt}\int_{\T \times \R} \rho(\tot)\lt(u(\tot) - \Omega\rt) g(\Omega) \,d\theta d\Omega \cr
&\qquad = - \frac1m \int_{\T \times \R} \rho(\tot)\lt(u(\tot) - \Omega\rt) g(\Omega) \,d\theta d\Omega, \cr
&\mbox{ and}\cr
&(iii) \,\, \frac{d}{dt}\int_{\T \times \R} (\rho u^2)(\tot)g(\Omega) \,d\theta d\Omega\cr
&\qquad = - \frac2m\int_{\T \times \R} (\rho u^2)(\tot)g(\Omega)\,d\theta d\Omega + \frac1m\int_{\T \times \R} (\rho u)(\tot)\Omega g(\Omega)\,d\theta d\Omega\cr
&\qquad  \,\,-\frac {K}{m} \int_{\T^2 \times \R^2} \sin(\theta - \theta_*)(u(\tot) - u(\tots))\cr
&\hspace{4cm} \times \rho(\tots)g(\Omega_*)\rho(\tot)g(\Omega)\,d\theta d\theta_* d\Omega  d\Omega_*.
\end{aligned}$$
\end{lemma}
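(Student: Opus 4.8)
The three identities are \emph{a priori} balance laws, and the plan is to obtain each by testing the two equations in \eqref{hydro_Ku} against the weights $1$, $u$, and (for the energy) $u$ once more, integrating over $\T \times \R$ against $g(\Omega)$, and exploiting two structural features: periodicity in $\theta$, which annihilates every exact $\theta$-derivative once integrated over $\T$, and the antisymmetry of the coupling kernel $\sin(\theta_*-\theta)$ tested against the symmetric product measure $\rho(\tot)g(\Omega)\,\rho(\tots)g(\Omega_*)$. Throughout I would use that $\Omega$ is only a parameter carrying no time dynamics, and that a global classical solution has enough integrability in $\Omega$ (guaranteed by the moment bound in \eqref{condi_g}) to differentiate under the integral sign and to keep each integral finite.

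For (i), I would integrate the continuity equation $\pa_t\rho + \pa_\theta(\rho u)=0$ over $(\theta,\Omega)$ against $g(\Omega)$; the flux $\int_\T \pa_\theta(\rho u)\,d\theta$ vanishes because $\theta\mapsto(\rho u)(\tot)$ is $2\pi$-periodic, leaving $\frac{d}{dt}\int_{\T\times\R}\rho\,g=0$. For (ii), I would differentiate $\int_{\T\times\R}\rho u\,g$, insert the momentum equation, and discard $\int_\T\pa_\theta(\rho u^2)\,d\theta=0$ by periodicity. The source then splits into three pieces: the relaxation piece gives $-\frac1m\int\rho u\,g$, the forcing piece gives $\frac1m\int\rho\Omega\,g$, and the nonlocal piece vanishes after relabeling $(\theta,\Omega)\leftrightarrow(\theta_*,\Omega_*)$ since $\sin(\theta_*-\theta)$ is odd while the measure is symmetric. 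Because $\frac{d}{dt}\int\rho\Omega\,g=0$ (the same periodicity argument with $\Omega$ constant), the quantity $\int\rho\Omega\,g$ is time-independent, and collecting the pieces reproduces exactly $\frac{d}{dt}\int\rho(u-\Omega)g=-\frac1m\int\rho(u-\Omega)g$.

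For (iii), the plan is to pass to the kinetic-energy balance. Multiplying the momentum equation by $u$ and using the continuity equation to remove $\pa_t\rho$ yields the conservative form $\pa_t(\tfrac12\rho u^2)+\pa_\theta(\tfrac12\rho u^3)=u\cdot(\text{source})$, which I would verify by checking that the convective contributions assemble into an exact $\theta$-derivative. Integrating this against $g$ over $\T\times\R$ (and multiplying by $2$ to match the normalization $\rho u^2$) kills the flux by periodicity, so only the source survives: the relaxation and forcing terms produce the $\int\rho u^2\,g$ and $\int\rho u\Omega\,g$ contributions. The nonlocal term now carries the extra factor $u(\tot)$, which destroys the antisymmetry that saved us in (ii). The decisive step is to symmetrize it: averaging the integrand with its image under $(\theta,\Omega)\leftrightarrow(\theta_*,\Omega_*)$ converts $u(\tot)\sin(\theta_*-\theta)$ into $-\tfrac12\sin(\theta-\theta_*)(u(\tot)-u(\tots))$, producing the stated symmetric double integral with kernel $\sin(\theta-\theta_*)(u(\tot)-u(\tots))$ and the prefactor $-\frac{K}{m}$.

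The routine parts are the periodicity cancellations and the verification of the energy identity $\pa_t(\tfrac12\rho u^2)+\pa_\theta(\tfrac12\rho u^3)=u\cdot(\text{source})$. \textbf{The main obstacle} is the symmetrization bookkeeping in (iii): one must track the factor coming from the change of variables so that the odd-kernel/symmetric-measure structure delivers precisely the velocity difference $u(\tot)-u(\tots)$ together with the correct constant. A secondary technical point is justifying the exchange of $\frac{d}{dt}$ with the $\Omega$-integral and the finiteness of $\int\rho u^2\,g$ and $\int\rho u\Omega\,g$; here I would invoke the second-moment condition in \eqref{condi_g} and the regularity and decay built into the notion of a global classical solution.
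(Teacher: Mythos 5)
Your proposal is correct and follows essentially the same route as the paper: integrate the continuity equation for (i); test the momentum equation against $g$, kill the flux by periodicity and the nonlocal term by the odd-kernel/symmetric-measure cancellation, and use the time-invariance of $\int\rho\,\Omega\,g$ for (ii); and multiply the momentum equation by $u$, eliminate $\pa_t\rho$ via the continuity equation, and symmetrize the nonlocal term under $(\theta,\Omega)\leftrightarrow(\theta_*,\Omega_*)$ for (iii). One bookkeeping note: carrying out your plan (as the paper's own displayed computation \eqref{est_energy1} confirms) produces the coefficient $\frac{2}{m}$ on the $\int\rho u\,\Omega\,g$ term after doubling, so the $\frac{1}{m}$ appearing in the lemma's statement of (iii) seems to be a typo in the paper rather than a defect of your argument.
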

\begin{proof} (i) It clearly follows from the continuity equation in \eqref{hydro_Ku}. \newline

(ii) Multiplying the momentum equation in \eqref{hydro_Ku} by $g(\Omega)$ and integrating the resulting relation with respect to $\theta$ and $\Omega$, we find
\begin{align}\label{est_mv}
\begin{aligned}
&\frac{d}{dt}\int_{\T \times \R} (\rho u)(\tot)g(\Omega) \,d\theta d\Omega \cr
&\quad = -\frac1m\int_{\T \times \R} (\rho u)(\tot) g(\Omega) \,d\theta d\Omega + \frac1m \int_{\T \times \R} \rho(\tot)\Omega g(\Omega)\,d\theta d\Omega,\cr
&\quad = - \frac1m \int_{\T \times \R} \rho(\tot)\lt(u(\tot) - \Omega\rt) g(\Omega) \,d\theta d\Omega, \cr
\end{aligned}
\end{align}
since $\sin(-\theta) = -\sin(\theta)$ for $\theta \in \T$. On the other hand, it again follows from the continuity equation in \eqref{hydro_Ku} that
\[
\frac{d}{dt}\int_{\T \times \R} \rho(\tot)\Omega g(\Omega)\,d\theta d\Omega = 0.
\]
This together with \eqref{est_mv} asserts (ii). \newline

(iii) A straightforward computation yields
\begin{align*}
\frac 12 \frac{d}{dt}\int_{\T \times \R}\rho u^2\,d\theta d\Omega &= \frac 12 \int_{\T \times \R}u^2 \pa_t \rho \,d\theta d\Omega +  \int_{\T \times \R}\rho u \pa_t u \,d\theta d\Omega \\
&=-\frac 12  \int_{\T \times \R}u^2 \pa_\theta(\rho u) \,d\theta d\Omega +  \int_{\T \times \R}\rho u \pa_t u \,d\theta d\Omega \\
&= \int_{\T \times \R}\rho u^2 \pa_\theta u \,d\theta d\Omega +  \int_{\T \times \R}\rho u \pa_t u \,d\theta d\Omega\\
&=  \int_{\T \times \R}u(\pa_t(\rho u) + \pa_\theta (\rho u^2)) \,d\theta d\Omega.
\end{align*}
Then, we use the momentum equation in \eqref{hydro_Ku} to find
\begin{align}\label{est_energy1}
\begin{aligned}
&\frac12\frac{d}{dt}\int_{\T \times \R} (\rho u^2)(\tot)g(\Omega)\,d\theta d\Omega \cr
&\quad = -\frac1m\int_{\T \times \R} (\rho u^2)(\tot)g(\Omega)\,d\theta d\Omega + \frac1m\int_{\T \times \R} (\rho u)(\tot)\Omega g(\Omega)\,d\theta d\Omega\cr
&\qquad + \frac Km \int_{\T^2 \times \R^2} \sin(\theta_* - \theta)(\rho u)(\tot)\rho(\tots)g(\Omega_*)g(\Omega)\,d\theta d\theta_* d\Omega  d\Omega_*.
\end{aligned}
\end{align}
By exchanging $(\theta,\Omega)\leftrightarrow(\theta_*,\Omega_*)$, we can rewrite the last term on the right hand side of the above equation as
\begin{align}\label{est_energy2}
\begin{aligned}
&\frac12\frac Km \int_{\T^2 \times \R^2} \sin(\theta_* - \theta)\lt(u(\tot) - u(\tots)\rt)\cr
&\hspace{3cm} \times \rho(\tot)\rho(\tots)g(\Omega_*)g(\Omega)\,d\theta d\theta_* d\Omega  d\Omega_*.
\end{aligned}
\end{align}
Combining \eqref{est_energy1} and \eqref{est_energy2}, we have the desired result.
\end{proof}

\begin{remark}\label{rmk_21} From the continuity equation in \eqref{hydro_Ku}, we easily get
\[
\int_\T \rho(\tot)\,d\theta = \int_\T \rho_0(\theta,\Omega)\,d\theta = 1,
\]
due to \eqref{condi_ir}. Then this together with Lemma \ref{lem_energy} (i) and \eqref{condi_g} yields
\[
\int_{\T \times \R} \rho(\tot)g(\Omega)\,d\theta d\Omega = \int_{\T \times \R} \rho_0(\theta,\Omega)g(\Omega)\,d\theta d\Omega = \int_\R g(\Omega)\,d\Omega = 1.
\]
\end{remark}
\begin{remark}It follows from Lemma \ref{lem_energy} (ii) that 
$$\begin{aligned}
&\int_{\T \times \R} \rho(\tot)\lt(u(\tot) - \Omega\rt) g(\Omega) \,d\theta d\Omega \cr
&\quad = e^{-\frac tm}\int_{\T \times \R} (\rho_0(\theta,\Omega) (u_0(\theta,\Omega) - \Omega)g(\Omega) \,d\theta d\Omega \to 0 \quad \mbox{as} \quad t \to \infty.
\end{aligned}$$
In particular, if we consider the case of identical oscillators, i.e., $g(\Omega) = \delta_0$, upon shifting if necessary, then the above estimate gives the exponential decay of the momentum:
\[
\int_{\T} (\rho u)(\theta,t) \,d\theta = e^{-\frac tm}\int_{\T} (\rho_0 u_0)(\theta) \,d\theta \to 0 \quad \mbox{as} \quad t \to \infty.
\]
\end{remark}

\subsection{Auxiliary lemmas}
In this part, we present several useful lemmas that will be used out later. We first provide the exponential decay estimates for the nonnegative functions satisfying the following second-order differential inequality:
\begin{align} \label{CHY}
\begin{aligned}
&a x''(t) + b x'(t) +cx(t) \leq 0, \quad t > 0, \\
& x(0) = x_0, \quad  x'(0) = x_1,
\end{aligned}
\end{align}
where $a>0, b$ and $c$ are constants. We recall \cite[Lemma 3.1]{CHY1} the following inequalities. 
\begin{lemma} \label{lem_gronwall}
Let $x = x(t)$ be a nonnegative $\mathcal C^2$-function satisfying the differential inequality \eqref{CHY}. 
\begin{itemize}
\item[(i)] If $b^2 -4ac>0$, then we have \\
\[ 
x(t) \leq x_0 e^{-\alpha_1 t} + a\left(x_1 + \alpha_1 x_0\right)  \frac{e^{-\alpha_2 t}-e^{-\alpha_1 t}}{\sqrt{b^2-4ac}},
\]
where decay exponents $\alpha_1$ and $\alpha_2$ are given by
\[
\alpha_1 := \frac{b+\sqrt{b^2-4ac}}{2a} \quad \mbox{and} \quad \alpha_2:= \frac{b-\sqrt{b^2-4ac}}{2a},
\]
respectively.
\item[(ii)] If $b^2 - 4ac \leq 0$, then we have \\
\[
x(t) \leq e^{-\frac{b}{2a}t}\left(x_0 + \left(\frac{b}{2a}x_0 + x_1\right)t\right).
\]
\end{itemize}
\end{lemma}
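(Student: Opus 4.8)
The plan is to exploit the fact that the differential operator in \eqref{CHY} has constant coefficients and therefore factors through its characteristic roots. A direct computation gives $\alpha_1+\alpha_2 = b/a$, $\alpha_1\alpha_2 = c/a$, and $\alpha_1-\alpha_2 = \sqrt{b^2-4ac}/a$, so that for any $\mc^2$ function $x$ one has the operator identity
\[
a x'' + b x' + c x = a\lt(\frac{d}{dt}+\alpha_2\rt)\lt(\frac{d}{dt}+\alpha_1\rt)x.
\]
Thus \eqref{CHY} says precisely that the right-hand side is nonpositive, and since $a>0$ the idea is to peel off the two first-order factors one at a time, each time using an integrating factor (a first-order Gr\"onwall argument).

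For part (i), where $b^2-4ac>0$ (so $\alpha_1\neq\alpha_2$ are real), I would set $y:=x'+\alpha_1 x$, so that the inequality becomes the first-order relation $y'+\alpha_2 y\le 0$. Multiplying by the integrating factor $e^{\alpha_2 t}$ and integrating gives $y(t)\le y(0)e^{-\alpha_2 t}=(x_1+\alpha_1 x_0)e^{-\alpha_2 t}$. Substituting back, $x$ then satisfies the first-order inequality $x'+\alpha_1 x\le (x_1+\alpha_1 x_0)e^{-\alpha_2 t}$; multiplying by $e^{\alpha_1 t}$, integrating from $0$ to $t$, and using $\alpha_1-\alpha_2=\sqrt{b^2-4ac}/a$ to simplify $\int_0^t e^{(\alpha_1-\alpha_2)s}\,ds$ yields exactly the stated bound. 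The only point requiring care is the order in which the factors are removed: peeling off $(\frac{d}{dt}+\alpha_1)$ first (rather than $(\frac{d}{dt}+\alpha_2)$) is what produces the leading term $x_0 e^{-\alpha_1 t}$ and the combination $x_1+\alpha_1 x_0$ in the stated form.

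For part (ii), where $b^2-4ac\le 0$, the characteristic roots coincide or are complex, so the clean factorization above is no longer directly usable, and this is the step where the nonnegativity hypothesis becomes essential. Here I would first observe that $b^2-4ac\le0$ together with $a>0$ forces $c\ge b^2/(4a)$; combined with $x(t)\ge0$, this gives $cx\ge \frac{b^2}{4a}x$, whence
\[
0 \ge a x'' + b x' + c x \ge a x'' + b x' + \frac{b^2}{4a}x = a\lt(\frac{d}{dt}+\frac{b}{2a}\rt)^2 x.
\]
Writing $\alpha:=b/(2a)$, the inequality $(\frac{d}{dt}+\alpha)^2 x\le 0$ is now a perfect square, and the same two-step integrating-factor argument applies with the repeated root: setting $z:=x'+\alpha x$ gives $z'+\alpha z\le0$, hence $z(t)\le (x_1+\alpha x_0)e^{-\alpha t}$, and then $x'+\alpha x\le (x_1+\alpha x_0)e^{-\alpha t}$ integrates (the factor $e^{\alpha t}$ turns the right side into a constant) to the polynomial-times-exponential bound $x(t)\le e^{-\alpha t}(x_0+(x_1+\alpha x_0)t)$, which is the claim.

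I expect the main obstacle — indeed essentially the only nonroutine idea — to be the reduction in part (ii): recognizing that one should \emph{lower} $c$ to $b^2/(4a)$ by invoking $x\ge0$, thereby converting the operator into an exact square $(\frac{d}{dt}+b/(2a))^2$, after which the argument parallels part (i). Everything else amounts to a pair of first-order Gr\"onwall/integrating-factor estimates plus the bookkeeping needed to match the precise constants $\alpha_1,\alpha_2$ and the factor $a/\sqrt{b^2-4ac}$.
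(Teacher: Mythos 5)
Your proof is correct. Note that the paper itself gives no proof of this lemma --- it is simply recalled from \cite[Lemma 3.1]{CHY1} --- so there is nothing in the text to diverge from; your argument (factoring $a\frac{d^2}{dt^2}+b\frac{d}{dt}+c$ through its characteristic roots and peeling off the two first-order factors with integrating factors, in the order that produces $x_0e^{-\alpha_1 t}$ and the combination $x_1+\alpha_1x_0$) is the standard one and reproduces the stated constants exactly. You also correctly isolate the one genuinely non-routine point: in case (ii) the nonnegativity of $x$ is what lets you replace $c$ by $b^2/(4a)$ and turn the operator into the exact square $a\left(\frac{d}{dt}+\frac{b}{2a}\right)^2$, after which the repeated-root integration gives the polynomial-times-exponential bound.
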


We next provide the following simple lemma without the proof.
\begin{lemma}  \label{Barbalat}
Suppose that  a real-valued function $f: [0, \infty) \to \R$ is uniformly continuous and satisfies
\[ 
\lim_{t \to \infty} \int_0^t f(s) \,ds \quad \text{exists}. 
\]
Then, $f$ tends to zero as $t \to \infty$:
\[ 
\lim_{t \to \infty} f(t) = 0. 
\]
\end{lemma}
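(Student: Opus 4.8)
The plan is to argue by contradiction, using uniform continuity to upgrade a pointwise lower bound on $|f|$ into a uniform lower bound on integrals over short intervals, which then collides with the Cauchy criterion for the convergent improper integral.

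First I would suppose, for contradiction, that $f(t) \not\to 0$ as $t \to \infty$. Then there exist $\varepsilon_0 > 0$ and an increasing sequence $t_n \to \infty$ with $|f(t_n)| \geq \varepsilon_0$ for every $n$. Next I would invoke the uniform continuity of $f$ to select $\delta > 0$ such that $|f(t) - f(s)| < \varepsilon_0/2$ whenever $|t - s| \leq \delta$. For each $n$ and every $t \in [t_n, t_n + \delta]$, the reverse triangle inequality then yields
\[
|f(t)| \geq |f(t_n)| - |f(t) - f(t_n)| \geq \varepsilon_0 - \frac{\varepsilon_0}{2} = \frac{\varepsilon_0}{2} > 0 .
\]
Since $f$ is continuous and nonvanishing on $[t_n, t_n + \delta]$, it keeps a constant sign there by the intermediate value theorem, so that
\[
\left| \int_{t_n}^{t_n+\delta} f(s)\,ds \right| = \int_{t_n}^{t_n+\delta} |f(s)|\,ds \geq \frac{\varepsilon_0 \delta}{2}.
\]

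Finally, the assumed existence of $\lim_{t\to\infty}\int_0^t f(s)\,ds$ gives, via the Cauchy criterion, a $T>0$ such that $\left|\int_a^b f(s)\,ds\right| < \varepsilon_0\delta/4$ for all $b>a>T$. Choosing $n$ so large that $t_n > T$ and applying this with $a = t_n$ and $b = t_n + \delta$ contradicts the lower bound just obtained. Hence $f(t) \to 0$ as $t \to \infty$.

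The only delicate point is the sign argument: without the constant-sign conclusion on $[t_n, t_n+\delta]$, interior cancellation could in principle destroy the lower bound $\varepsilon_0\delta/2$, so the estimate $\left|\int_{t_n}^{t_n+\delta} f\right| = \int_{t_n}^{t_n+\delta}|f|$ must be justified rather than taken for granted. This is where continuity of $f$ together with the intermediate value theorem is essential; it is the sole genuine (if minor) obstacle, the remaining steps being routine applications of uniform continuity and the Cauchy criterion.
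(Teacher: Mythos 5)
Your argument is correct and complete: it is the standard proof of Barbalat's lemma, and the constant-sign observation on $[t_n,t_n+\delta]$ that you flag is indeed the step needed to prevent cancellation. The paper states this lemma explicitly ``without the proof,'' so there is nothing to compare against; your contradiction argument via uniform continuity plus the Cauchy criterion for the convergent improper integral is exactly the expected one.
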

We also present a decay estimate for some differential equation, the proof of which can be found in \cite[Lemma 4.1]{CHM18}.
\begin{lemma}\label{lem_decay} Let $y =y(t)$ be a nonnegative $\mc^1$-function satisfying 
\[
y'(t) + \alpha y(t) = \beta(t) \quad t > 0, \quad y(0) = y_0,
\]
where $\alpha > 0$ and $\beta$ is a bounded continuous function decaying to zero as t goes to infinity. Then $y$ satisfies
\[
y(t) \leq y_0 e^{-\alpha t} + \frac1\alpha \max_{s \in [t/2, t]}|\beta(s)| + \frac{\|\beta\|_{L^\infty}}{\alpha} e^{-\frac{\alpha t}{2}}, \quad t \geq 0.
\]
In particular, $y$ tends to zero as $t$ goes to infinity. 
\end{lemma}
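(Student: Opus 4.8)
The plan is to treat the identity $y'(t) + \alpha y(t) = \beta(t)$ as a scalar linear ODE, solve it explicitly by the integrating-factor (Duhamel) method, and then extract the decay by a dyadic split of the resulting time integral. Multiplying through by $e^{\alpha t}$ gives $\frac{d}{dt}\left(e^{\alpha t} y(t)\right) = e^{\alpha t}\beta(t)$, and integrating from $0$ to $t$ yields the variation-of-constants representation
\[
y(t) = y_0 e^{-\alpha t} + \int_0^t e^{-\alpha(t-s)}\beta(s)\,ds.
\]
The first term already accounts for the $y_0 e^{-\alpha t}$ appearing in the claim, so the whole matter reduces to bounding the Duhamel integral in a way that separates the transient from the long-time behavior of $\beta$.

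The key step is to split the integral at the midpoint $s = t/2$ and estimate the two pieces by different means. On the tail $[t/2, t]$ the integrand is controlled by the local supremum of $\beta$: since $\int_{t/2}^t e^{-\alpha(t-s)}\,ds = \alpha^{-1}(1 - e^{-\alpha t/2}) \le \alpha^{-1}$, one obtains
\[
\left| \int_{t/2}^t e^{-\alpha(t-s)}\beta(s)\,ds \right| \le \frac1\alpha \max_{s\in[t/2,t]}|\beta(s)|.
\]
On the initial piece $[0,t/2]$ I would instead use the global bound $|\beta| \le \|\beta\|_{L^\infty}$ together with $\int_0^{t/2} e^{-\alpha(t-s)}\,ds = \alpha^{-1}(e^{-\alpha t/2} - e^{-\alpha t}) \le \alpha^{-1} e^{-\alpha t/2}$, which produces the exponential factor and gives
\[
\left| \int_0^{t/2} e^{-\alpha(t-s)}\beta(s)\,ds \right| \le \frac{\|\beta\|_{L^\infty}}{\alpha}\, e^{-\frac{\alpha t}{2}}.
\]
Summing the three contributions yields exactly the stated bound.

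For the final assertion, I would let $t \to \infty$ in the estimate: the first and third terms vanish because of their exponential factors, while the middle term tends to zero since $t/2 \to \infty$ and the hypothesis $\beta(s) \to 0$ forces $\max_{s\in[t/2,t]}|\beta(s)| \to 0$; together with $y \ge 0$ this gives $y(t) \to 0$. There is no genuine obstacle here: the only point that requires a small amount of care is the choice of split point, which must advance with $t$ (so that the local supremum of $\beta$ is taken over a window that escapes to infinity) while still leaving enough of the interval $[0,t/2]$ to furnish the exponential smallness $e^{-\alpha t/2}$ on the complementary piece. The midpoint $t/2$ is the natural choice balancing these two requirements, and any fixed fraction of $t$ would work equally well with adjusted constants.
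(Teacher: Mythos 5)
Your proof is correct and is essentially the same argument as the one the paper cites (\cite[Lemma 4.1]{CHM18}): Duhamel's representation followed by splitting the convolution integral at $s=t/2$, bounding the tail by the local supremum of $\beta$ and the initial piece by $\|\beta\|_{L^\infty}$ times the exponentially small kernel mass. No issues.
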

We finally recall from \cite[Lemma 2.4]{Kawa} the following Sobolev inequality.
\begin{lemma}\label{lem_kawa} Let $k\geq 1$. For $f \in (H^k \cap L^\infty)(\T)$, let $p \in [1,\infty]$, and $h \in \mc^k(B(0,\|f\|_{L^\infty}))$ where $B(0,R)$ denotes the ball of radius $R>0$ centered at the origin in $\R$. Then there exists a positive constant $C = C(k,p,h)$ such that
\[
\|\pa_\theta^k h(f)\|_{L^p} \leq C(1 + \|f\|_{L^\infty})^{k-1}\|\pa_\theta^k f\|_{L^p}.
\]
\end{lemma}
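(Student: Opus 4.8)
The plan is to reduce this composition estimate to a chain-rule expansion followed by an interpolation (Gagliardo--Nirenberg) inequality. First I would apply the Fa\`a di Bruno formula to write
\[
\pa_\theta^k h(f) = \sum_{\vec m} c_{\vec m}\, h^{(|\vec m|)}(f)\prod_{j=1}^k \bigl(\pa_\theta^j f\bigr)^{m_j},
\]
where the sum runs over the finitely many multi-indices $\vec m = (m_1,\dots,m_k)$ of nonnegative integers satisfying $\sum_{j=1}^k j\,m_j = k$, where $|\vec m| := \sum_{j=1}^k m_j \in \{1,\dots,k\}$, and where the $c_{\vec m}$ are universal combinatorial constants. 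It then suffices to bound each summand in $L^p(\T)$ by $C(k,p,h)(1+\|f\|_{L^\infty})^{k-1}\|\pa_\theta^k f\|_{L^p}$ and to add up the finitely many contributions.

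For a fixed $\vec m$, the composition factor is controlled pointwise by $\|h^{(|\vec m|)}(f)\|_{L^\infty} \le \sup_{|y|\le \|f\|_{L^\infty}} |h^{(|\vec m|)}(y)|$, which is finite since $h \in \mc^k$ and $|\vec m|\le k$; this quantity is absorbed into the constant $C(k,p,h)$. To estimate the remaining product of derivatives I would use the generalized H\"older inequality with the exponents $q_j := pk/j$: since
\[
\sum_{j=1}^k \frac{m_j}{q_j} = \frac{1}{pk}\sum_{j=1}^k j\,m_j = \frac{1}{p},
\]
H\"older gives $\bigl\|\prod_j (\pa_\theta^j f)^{m_j}\bigr\|_{L^p} \le \prod_j \|\pa_\theta^j f\|_{L^{q_j}}^{m_j}$. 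To each factor I would then apply the one-dimensional Gagliardo--Nirenberg interpolation inequality on the torus,
\[
\|\pa_\theta^j f\|_{L^{q_j}} \le C\,\|f\|_{L^\infty}^{1-j/k}\,\|\pa_\theta^k f\|_{L^p}^{j/k}, \qquad 1 \le j \le k,
\]
which holds without any additive lower-order term because the interpolation is against the $L^\infty$ norm and can be obtained by repeated integration by parts.

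Multiplying these estimates and collecting exponents is the decisive step. The power of $\|\pa_\theta^k f\|_{L^p}$ produced equals $\frac{1}{k}\sum_j j\,m_j = 1$, precisely because of the Fa\`a di Bruno constraint $\sum_j j m_j = k$, while the power of $\|f\|_{L^\infty}$ equals $\sum_j m_j - \frac{1}{k}\sum_j j m_j = |\vec m| - 1$. Since $1 \le |\vec m| \le k$ we have $0 \le |\vec m|-1 \le k-1$, hence $\|f\|_{L^\infty}^{|\vec m|-1} \le (1+\|f\|_{L^\infty})^{k-1}$, and each summand satisfies the claimed bound. Summing over the finitely many $\vec m$ then yields the lemma.

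The main obstacle is the interpolation and exponent accounting rather than the chain rule: one must verify that the single constraint $\sum_j j m_j = k$ simultaneously makes the H\"older exponents $q_j = pk/j$ compatible and forces the top-order norm to appear to the first power, with all the remaining weight falling on $\|f\|_{L^\infty}$. A secondary technical point is the endpoint behaviour $p \in \{1,\infty\}$: for $p=\infty$ all $q_j=\infty$ and one invokes the $L^\infty$ form of the interpolation inequality, while for $p=1$ the finite measure of $\T$ enters the interpolation step. Establishing the homogeneous Gagliardo--Nirenberg inequality above, without the additive term that is generally needed on bounded domains, is the only genuinely analytic input, and it is where I would concentrate the most care.
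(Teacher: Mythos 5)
The paper does not prove this lemma at all; it is quoted verbatim from Kawashima's thesis (\cite[Lemma 2.4]{Kawa}), so there is no in-paper argument to compare against. Your proof is the standard Moser-type derivation of that estimate --- Fa\`a di Bruno, generalized H\"older with exponents $q_j = pk/j$, and the homogeneous Gagliardo--Nirenberg interpolation $\|\pa_\theta^j f\|_{L^{pk/j}} \le C\|f\|_{L^\infty}^{1-j/k}\|\pa_\theta^k f\|_{L^p}^{j/k}$ --- and the exponent bookkeeping ($\sum_j j m_j = k$ forcing the top norm to power $1$ and $\|f\|_{L^\infty}$ to power $|\vec m|-1 \le k-1$) is correct. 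You correctly isolate the one genuinely analytic input: the additive-term-free interpolation inequality, which does hold on $\T$ (including $p\in\{1,\infty\}$) because periodic integration by parts produces no boundary terms, so the argument is complete.
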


%
%
%
%

\section{Local-in-time existence and uniqueness of classical solutions}\label{sec_local_thm}
In this section, we present the local-in-time well-posedness of the Lagrangian system \eqref{lag_Ku}. More precisely, we show the local-in-time existence and uniqueness of $H^s_g$-solutions with $s\geq 1$ to the system \eqref{lag_Ku}.
\begin{theorem} \label{thm_local}
Let $s\geq 1$. Suppose that $(\rho_0,u_0) \in H_g^s(\mathcal S_0) \times H_g^{s+1}(\mathcal S_0)$. Then for any constants $0 < M < M'$ there exists $T_0 > 0$ depending only on $M$ and $M'$, such that if $\|u_0\|_{H_g^{s+1}} < M$, then the system \eqref{lag_Ku}-\eqref{init_lag} has a unique solution 
\[
(h,v) \in \mc([0,T_0];H_g^s(\mathcal S_0)) \times \mc([0,T_0];H_g^{s+1}(\mathcal S_0))
\] 
satisfying
\[
\sup_{0 \leq t \leq T_0}\|v(\cdot,\cdot,t)\|_{H_g^{s+1}} \leq M'.
\]
\end{theorem}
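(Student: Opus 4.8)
The plan is to exploit the decoupling noted in the remark following \eqref{eq_eta}: written via the order parameter as in \eqref{lag_order_form}, the velocity equation is a closed integro-differential equation for $v$ (equivalently a second-order equation for $\eta=\theta+\int_0^t v\,d\tau$), while the density is recovered \emph{afterwards} from the algebraic relation $h\,\pa_\theta\eta=\rho_0$. Accordingly I would first construct $v\in\mc([0,T_0];H^{s+1}_g(\mms_0))$ by a Picard iteration, and only at the end set $h:=\rho_0/\pa_\theta\eta$ and verify $h\in\mc([0,T_0];H^s_g(\mms_0))$. For the iteration, take $\eta^0(\tot):=\theta$ and, given the $n$-th iterate, let $r^n e^{\mathrm i\varphi^n}:=\int_{\T\times\R}e^{\mathrm i\eta^n}\rho_0\,g\,d\theta d\Omega$ and define $v^{n+1}$ as the solution of the linear ODE in $t$ (with $\theta,\Omega$ as parameters)
\[
m\pa_t v^{n+1}+v^{n+1}=\Omega+Kr^n(t)\sin\!\big(\varphi^n(t)-\eta^n(\tot)\big),\qquad v^{n+1}(\cdot,\cdot,0)=u_0,
\]
whose Duhamel representation is
\[
v^{n+1}(t)=e^{-t/m}u_0+\frac1m\int_0^t e^{-(t-\tau)/m}\Big(\Omega+Kr^n\sin(\varphi^n-\eta^n)\Big)\,d\tau,\qquad \eta^{n+1}:=\theta+\int_0^t v^{n+1}\,d\tau .
\]

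The core estimate is a uniform bound: assuming $\sup_{[0,T_0]}\|v^n\|_{H^{s+1}_g}\le M'$, I would show the same bound for $v^{n+1}$ provided $T_0=T_0(M,M')$ is small. The $\Omega$-term contributes only to the $L^2_g$-part of the norm, since $\pa_\theta\Omega=0$, and is finite by the second-moment assumption \eqref{condi_g}. For the nonlocal term I would use the composition estimate of Lemma \ref{lem_kawa}: because $\sin$ and all its derivatives are globally bounded, $\pa_\theta^k\sin(\varphi^n-\eta^n)$ is a sum of products of the $\pa_\theta^{j}\eta^n$ with $j\ge1$, and since $\pa_\theta\eta^n=1+\int_0^t\pa_\theta v^n\,d\tau$ and $\pa_\theta^{j}\eta^n=\int_0^t\pa_\theta^{j}v^n\,d\tau$ for $j\ge2$, these $\theta$-derivatives inherit no $\Omega$-growth from the forcing. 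Together with $r^n\le1$ this yields $\|\sin(\varphi^n-\eta^n)\|_{H^{s+1}_g}\ls 1+T_0\,P(M')$ for a polynomial $P$; inserting this into the Duhamel formula and using $\|u_0\|_{H^{s+1}_g}<M<M'$ closes the bound for $T_0$ small, depending only on $M,M'$.

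Next I would show $(v^n)$ is Cauchy in $\mc([0,T_0];H^{s+1}_g)$. The difference $v^{n+1}-v^n$ solves the same ODE with source equal to the difference of the two nonlocal terms; using the mean-value theorem on $\sin$, the Lipschitz dependence of $(r^n,\varphi^n)$ on $\eta^n$, and the uniform $H^{s+1}_g$-bounds just obtained, this source is controlled in $H^{s+1}_g$ by $\sup_{[0,T_0]}\|\eta^n-\eta^{n-1}\|_{H^{s+1}_g}\le T_0\sup_{[0,T_0]}\|v^n-v^{n-1}\|_{H^{s+1}_g}$. A Gr\"onwall argument then produces a contraction factor $\le\tfrac12$ for $T_0$ small, giving convergence to a limit $v\in\mc([0,T_0];H^{s+1}_g)$; passing to the limit in the Duhamel formula shows $v$ solves \eqref{lag_order_form}. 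Shrinking $T_0$ once more guarantees $\pa_\theta\eta=1+\int_0^t\pa_\theta v\,d\tau\ge\tfrac12$, so $h=\rho_0/\pa_\theta\eta$ is well defined and lies in $\mc([0,T_0];H^s_g)$ (using that $H^s(\T)$ is an algebra for $s\ge1$ and a quotient estimate). Uniqueness follows from the identical difference estimate applied to two solutions with the same data.

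The step I expect to be the main obstacle is the uniform high-order weighted bound of the nonlocal forcing, with an existence time depending only on $M,M'$. The delicate point is that the forcing carries the unbounded natural frequency $\Omega$, so one must check carefully that \emph{every} $\theta$-derivative of the interaction term is free of $\Omega$-growth; this is precisely why the boundedness of $\sin$ and its derivatives—rather than the generic factor $(1+\|\eta^n\|_{L^\infty})^{k-1}$ appearing in Lemma \ref{lem_kawa}—is essential, while the single $\Omega$-contribution is absorbed into the $L^2_g$ norm through \eqref{condi_g}. Once this estimate is secured, the ODE solve, the algebra property, and the Gr\"onwall contraction are routine.
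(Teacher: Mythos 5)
Your overall strategy coincides with the paper's: both set up a Picard iteration for the characteristic/velocity pair on the Lagrangian system, solve the damped ODE in $t$ by Duhamel, obtain uniform $H^{s+1}_g$ bounds on the iterates by exploiting that $\sin$ and all of its derivatives are bounded (via Lemma \ref{lem_kawa}), pass to the limit, and only afterwards recover $h$ from $h\,\pa_\theta\eta=\rho_0$. (Whether the forcing is evaluated at $\eta^{n}$ or at $\eta^{n+1}$ is immaterial.) The substantive divergence is in the convergence step, and there your argument has a gap. You claim the iterates contract in $\mc([0,T_0];H^{s+1}_g)$, asserting that the difference of the nonlocal forcings is controlled in $H^{s+1}_g$ by $\sup_t\|\eta^n-\eta^{n-1}\|_{H^{s+1}_g}$. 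At top order, $\pa_\theta^{s+1}$ applied to the difference of the compositions produces, among other terms,
\[
\bigl(\cos(\varphi^n-\eta^n)-\cos(\varphi^{n-1}-\eta^{n-1})\bigr)\,\pa_\theta^{s+1}\eta^{n-1},
\]
whose $L^2_g$ norm requires pairing an $L^\infty$ bound on $\eta^{n}-\eta^{n-1}$, uniform in both $\theta$ \emph{and} $\Omega$, against $\|\pa_\theta^{s+1}\eta^{n-1}\|_{L^2_g}$. But $\|\cdot\|_{H^{s+1}_g}$ is an $L^2$ average in $\Omega$ weighted by $g$ and gives no pointwise control in $\Omega$, so the claimed Lipschitz estimate does not follow from the bounds you have. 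The paper is structured precisely to avoid this: it proves the contraction only in $L^2_g$ (where the forcing difference is genuinely Lipschitz, cf.\ \eqref{C-6}), upgrades to convergence in $H^s_g$ by interpolating with the uniform $H^{s+1}_g$ bounds, and then recovers strong time-continuity of $v$ in $H^{s+1}_g$ through weak compactness, lower semicontinuity of the norm, and the $\limsup$ estimate \eqref{C-10}. To repair your version you would either adopt that route or run a combined $L^\infty_{\theta,\Omega}\cap H^{s+1}_g$ contraction, with the extra bookkeeping that entails.

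A second, smaller gap: your final step infers $\pa_\theta\eta\ge\tfrac12$ from smallness of $T_0$, which implicitly uses $\|\pa_\theta v\|_{L^\infty}\ls\|v\|_{H^{s+1}_g}$. This embedding fails for the $g$-weighted spaces for the same reason as above, and the paper explicitly acknowledges, in the remark following the theorem, that positivity of $\pa_\theta\eta$ is only clear in the identical-oscillator case. Since the theorem only asks that $h\,\pa_\theta\eta=\rho_0$ hold as an identity with $h\in\mc([0,T_0];H^s_g)$, this does not sink the statement, but your explicit division $h=\rho_0/\pa_\theta\eta$ requires either the identical-case reduction or an additional argument.
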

\begin{remark}Almost the same argument as above can be applied to the case of identical oscillators, i.e., $g(\Omega) = \delta_{\Omega_0}(\Omega)$. In this case, the weighted spaces $L^p_g(\T \times \R)$ and $H^s_g(\T \times \R)$ reduce to the spaces $L^p(\T)$ and $H^s(\T)$, respectively.
\end{remark}
\begin{proof}[Proof of Theorem \ref{thm_local}] Although the proof is similar to that of \cite[Theorem A.1]{CCZ}, for the completeness of our paper, we provide the details of it. We first approximate solutions of the system \eqref{lag_Ku} by the sequence $(\eta^n,v^n)$ which is the solution of the following integro-differential system:
\begin{align}\begin{aligned} \label{ap}
&\pa_t \eta^{n+1}(\theta,\Omega,t) = v^n(\theta,\Omega,t), \quad (\theta,\Omega) \in \mathcal S_0, \quad t>0, \\
&\pa_t v^{n+1}(\theta,\Omega,t) = -\frac{1}{m}v^{n+1}(\theta,\Omega,t) + \frac{\Omega}{m} + \frac{K}{m}f^{n+1}(\theta,\Omega,t),
\end{aligned}\end{align}
with the initial data and the first iteration step defined by
\[
(\eta^n(x,\Omega,t),v^n(x,\Omega,t))|_{t=0} = (\theta, u_0(\theta,\Omega)), \quad n \geq 1, \quad (\theta,\Omega) \in \mms_0 
\]
and 
\[
v^0(x,\Omega,t) = u_0, \quad (\theta,\Omega,t) \in \mathcal S_0 \times \R_+. \]
Here the interaction term $f^{n+1}$ is given by
\[
f^{n+1}(\theta,\Omega,t) := \int_{\T \times \R}\sin(\eta^{n+1}(\theta_*,\Omega_*,t)-\eta^{n+1}(\theta,\Omega,t))\rho_0(\theta_*,\Omega_*)g(\Omega_*)\,d\theta_* d\Omega_*.
\]
From now on, for the notational simplicity, we suppress the $\theta$- and $\Omega$-dependences of the variables and domain if the context is clear.\\

\noindent $\bullet$~(Step 1: Uniform bounds): We claim that there exists $T_0 >0$ such that
\[ \sup_{0 \leq t \leq T_0} \|v^n(\cdot,\cdot,t)\|_{H_g^{s+1}} \leq M' \quad \mbox{for}~ n \in \mathbb N \cup \{0\}. \]
\textit{(Proof of claim)}: We use an induction argument. In the first iteration step, we find that 
\[\sup_{0 \leq t \leq T}\|v^0(\cdot,\cdot,t)\|_{H_g^{s+1}}=\|u_0\|_{H_g^{s+1}} \leq M < M'.\]
Let us assume that
\[v^n \in \mc([0,T];H_g^s(\mms_0)) \quad \mbox{and} \quad \sup_{0 \leq t \leq T} \|v^n(\cdot, \cdot, t)\|_{H_g^{s+1}} \leq M',\]
for some $T>0.$ Then, we check that the linear approximations $(\eta^{n+1},v^{n+1})$ from the system $\eqref{ap}$ are well-defined, and they satisfy 
\[
(\eta^{n+1},v^{n+1}) \in \mc([0,T];H_g^{s+1}(\mms_0)) \times \mc([0,T];H_g^{s+1}(\mms_0)).
\]
We begin by estimating $\eta^{n+1}$. It follows from the equation of $\eta^{n+1}$ in \eqref{ap} that 
\begin{align*}\begin{aligned}
\eta^{n+1}(\theta,\Omega,t) &= \theta + \int_0^t v^n(\theta,\Omega,s)\,ds \quad \mbox{and} \quad \pa_\theta^k \eta^{n+1}(\theta,\Omega,t) = \delta_{k,1}+\int_0^t \pa_\theta^k v^n(\theta,\Omega,s)\,ds, 
\end{aligned}\end{align*}
for $k \geq 1$, where $\delta_{k,1}$ denotes Kronecker delta, i.e., $\delta_{k,1}=1$ for $k=1$ and $\delta_{k,1}=0$ otherwise. The equation of $v^{n+1}$ in \eqref{ap} gives
\begin{equation} \label{C-2}
v^{n+1}(\theta,\Omega,t) = u_0(\theta,\Omega)e^{-t/m} + \Omega(1 - e^{-t/m}) + \frac Km e^{-t/m}\int_0^t f^{n+1}(s)e^{s/m}\,ds. 
\end{equation}
Since 
\[
|f^{n+1}| \leq \int_{\T \times \R}\rho_0(\theta_*,\Omega_*)g(\Omega_*)\,d\theta_* d\Omega_* = 1,
\]
we find
\[
|v^{n+1}(\theta,\Omega,t)| \leq |u_0(\theta,\Omega)|e^{-t/m} + (|\Omega| + K)(1 - e^{-t/m}).
\]
Thus we obtain
\bq\label{est_vn}
\|v^{n+1}\|_{L^2_g} \leq \|u_0\|_{L^2_g} e^{-t/m} + C(1 - e^{-t/m}),
\eq
for some $C>0$, due to \eqref{condi_g}. For $1 \leq k \leq s+1$, by taking $\pa_\theta^k$ to \eqref{C-2}, we get
\[
\pa_\theta^k v^{n+1}(\theta,\Omega,t) = \pa_\theta^k u_0(\theta,\Omega)e^{-t/m} +  \frac Km e^{-t/m}\int_0^t \pa_\theta^k f^{n+1}(s)e^{s/m}\,ds. 
\]
We next use the facts $\sin \in \mc^\infty(\T)$ and $\|\sin\|_{L^\infty} \leq 1$ with Lemma \ref{lem_kawa} to estimate
\[
\int_\T (\pa_\theta^k f^{n+1})^2\,d\theta \leq C \int_\T (\pa_\theta^k \eta^{n+1})^2\,d\theta,
\]
for some $C>0$. This yields
$$\begin{aligned}
&\int_{\T \times \R} (\pa_\theta^k v^{n+1})^2 g\,d\theta d\Omega \cr
&\quad \leq Ce^{-2t/m}\int_{\T \times \R} (\pa_\theta^k u_0)^2 g\,d\theta d\Omega + \frac{CK}{m} e^{-2t/m}\int_{\T \times \R} \lt( \int_0^t \pa_\theta^k f^{n+1}(s)e^{s/m}\,ds \rt)^2 g\,d\theta d\Omega\cr
&\quad \leq Ce^{-2t/m} \|\pa_\theta^k u_0\|_{L^2_g}^2 + CK(1 - e^{-2t/m})\int_0^t \int_{\T \times \R} (\pa_\theta^k f^{n+1})^2 g\,d\theta d\Omega ds\cr
&\quad \leq Ce^{-2t/m}\|\pa_\theta^k u_0\|_{L^2_g}^2 + C(1 - e^{-2t/m})\int_0^t \|\pa_\theta^k \eta^{n+1}(\cdot,\cdot,s)\|_{L^2_g} \,ds.
\end{aligned}$$
We then use
\[
\|\pa_\theta^k \eta^{n+1}(\cdot,\cdot,s)\|_{L^2_g} \leq C + \int_0^t \|\pa_\theta^k v^n(\cdot,\cdot,s)\|_{L^2_g}\,ds \leq C+M'T
\]
to have
\[
\|\pa_\theta^k v^{n+1}\|_{L^2_g} \leq Ce^{-t/m}\|\pa_\theta^k u_0\|_{L^2_g} + C(1 - e^{-t/m}),
\]
for some $C>0$. Combing the estimate above with \eqref{est_vn} asserts
\bq\label{C-4}
\|v^{n+1}(\cdot,\cdot,t)\|_{H^{s+1}_g} \leq Ce^{-t/m}\|u_0\|_{H^{s+1}_g} + C(1 - e^{-t/m}).
\eq
Note that at $t=0$, the right hand side of \eqref{C-4} is $\|u_0\|_{H^{s+1}} \leq M < M'$, thus there exists a small time $0<T_0<T$ such that 
\[
\sup_{0 \leq t \leq T_0}\|v^{n+1}(\cdot,\cdot,t)\|_{H^{s+1}_g} \leq M'.
\]
\noindent $\bullet$~(Step 2: Cauchy estimates): For notational simplicity, we set
\[\eta^{n+1,n}:=\eta^{n+1}-\eta^n, \quad v^{n+1,n}:=v^{n+1}-v^n, \quad \mbox{and} \quad f^{n+1,n}:= f^{n+1}-f^n.\]
Then we easily find
\[
\eta^{n+1,n}(\cdot,\cdot,t)= \int_0^t v^{n,n-1}(\cdot,\cdot,s)\,ds,
\]
thus
\begin{equation} \label{C-5}
\|\eta^{n+1,n}(\cdot,\cdot,t)\|_{L^2_g} \leq \int_0^t \|v^{n,n-1}(\cdot,\cdot,s)\|_{L^2_g}\,ds,
\end{equation}
and also we have
\begin{equation} \label{C-5-1}
v^{n+1,n}(\theta,\Omega,t) = \frac Km e^{-\frac{1}{m}t}\int_0^t f^{n+1,n}(\theta,\Omega,s)e^{\frac{1}{m}s}\,ds.
\end{equation}
Note that 
$$\begin{aligned}
|f^{n+1,n}(\theta,\Omega,t)| &\leq \int_{\T \times \R}|\eta^{n+1,n}(\theta_*,\Omega_*,t)-\eta^{n+1,n}(\theta,\Omega,t)|\rho_0(\theta_*,\Omega_*)g(\Omega_*)\,d\theta_*d\Omega_* \\
&\leq \int_{\T \times \R} |\eta^{n+1,n}(\theta_*,\Omega_*,t)|\rho_0(\theta_*,\Omega_*)g(\Omega_*)\,d\theta_* d\Omega_*+\frac{K}{m}|\eta^{n+1,n}(\theta,\Omega,t)|\\
&\leq \|\eta^{n+1,n}(\cdot,\cdot,t)\|_{L^2_g}\|\rho_0\|_{L^2_g} + \frac{K}{m} |\eta^{n+1,n}(\theta,\Omega,t)|,
\end{aligned}$$
where we used H\"older's inequality and $\|\rho_0\|_{L^1_g} = 1$. Thus we get
\begin{equation} \label{C-6}
\|f^{n+1,n}(\cdot,\cdot,t)\|^2_{L^2_g} \leq C\left( 1 + \|\rho_0\|_{L^2_g}^2 \right)\|\eta^{n+1,n}(\cdot,\cdot,t)\|^2_{L^2_g}.  
\end{equation}
Then \eqref{C-5-1} and \eqref{C-6} yield
\begin{align}\begin{aligned} \label{C-5-2}
\|v^{n+1,n}(\cdot,\cdot,t)\|_{L^2_g} &\leq C\left( 1 + \|\rho_0\|_{L^2_g} \right)\int_0^t \|\eta^{n+1,n}(\cdot,\cdot,s)\|_{L^2_g} \,ds.
\end{aligned}\end{align}
By setting $\Delta^{n+1}_{\eta,v}(t):= \|\eta^{n+1,n}(\cdot,\cdot,t)\|_{L^2_g}+\|v^{n+1,n}(\cdot,\cdot,t)\|_{L^2_g}$, and then we combine \eqref{C-5} and \eqref{C-5-2} to get
\[
\Delta^{n+1}_{\eta,v}(t) \lesssim \int_0^t \Delta^{n+1}_{\eta,v}(s)\,ds,
\]
which implies that $\Delta^{n+1}_{\eta,v}(t) \lesssim T_0^{n+1}/(n+1)!$. Thus, we conclude that $(\eta^n(\theta,\Omega,t),v^n(\theta,\Omega,t))$ is a Cauchy sequence in $\mathcal C([0,T_0];L^2_g(\mathcal S_0)) \times \mathcal C([0,T_0];L^2_g(\mathcal S_0))$.\\

\noindent $\bullet$~(Step 3: Regularity of limit functions): It follows from Step 2 that there exist limit functions $\eta$ and $v$ such that \[
(\eta^n,v^n) \to (\eta,v) \quad \mbox{in} \quad \mathcal C([0,T_0];L^2_g(\mathcal S_0)) \times \mathcal C([0,T_0];L^2_g(\mathcal S_0)) \quad \mbox{as}~n \to \infty.\]
Interpolating this with the uniform bound estimates in Step 1, we obtain 
\begin{equation} \label{C-7}
(\eta^n,v^n) \to (\eta,v) \quad \mbox{in} \quad \mathcal C([0,T_0];H^s_g(\mathcal S_0)) \times \mathcal C([0,T_0];H^s_g(\mathcal S_0)) \quad \mbox{as}~n \to \infty.
\end{equation}
Now, we claim that \[(\eta,v) \in \mathcal C([0,T_0];H^{s+1}_g(\mathcal S_0)) \times \mathcal C([0,T_0];H^{s+1}_g(\mathcal S_0)). \]
Note that $v \in \mc([0,T_0];H^{s+1}_g(\mathcal S_0))$ implies $\eta \in \mc^{1}([0,T_0];H^{s+1}_g(\mathcal S_0))$ thanks to \eqref{C-7} and \eqref{ap}, so it suffices to show that $v \in \mc([0,T_0];H^{s+1}_g(\mathcal S_0))$. It follows from Step 1 that there exists a subsequence $(v^{n_k})$ such that 
\[v^{n_k} \rightharpoonup \tilde v \quad \mbox{as}~ k \to \infty, \quad \mbox{thus} \quad \|\tilde v(\cdot,\cdot,t)\|_{H^{s+1}_g} \leq 
\liminf_{k \to \infty} \|v^{n_k}(\cdot,\cdot,t)\|_{H^{s+1}_g} \leq M', \]
for some $\tilde v \in L^\infty([0,T_0];H^{s+1}_g(\mathcal S_0)).$ On the other hand, we also have 
\[v^n(\cdot,\cdot,t) \to v(\cdot,\cdot,t) \quad \mbox{in}~H^s_g(\mathcal S_0),~ t \in [0,T_0].\]
Thus, we conclude that
\[\tilde v(t)=v(t) \quad \mbox{in}~ H^{s+1}_g(\mathcal S_0) ~\mbox{for each}~ t \in [0,T_0],\]
which yields
\[\sup_{0 \leq t \leq T_0} \|v(\cdot,\cdot,t)\|_{H^{s+1}_g} \leq M'.\]
Next, we show that 
\begin{equation} \label{C-8}
v \in \mc_w([0,T_0];H^{s+1}_g(\mathcal S_0)), \quad \mbox{i.e.,} \quad v(t) \rightharpoonup v(t_0) \quad \mbox{in}~H^{s+1}_g(\mathcal S_0) \quad \mbox{as}~t \to t_0,
\end{equation}
for $t_0 \in [0,T_0]$. Without loss of generality, we may assume that $t_0=0$. Since $\|\cdot\|_{H^{s+1}}$ is weakly lower semicontinuous,
\begin{equation} \label{C-9}
\|u_0\|_{H^{s+1}_g} \leq \liminf_{t \to 0+}\|v(t)\|_{H^{s+1}_g}.
\end{equation}
To show the weak continuity, let $\{t_k\}$ be a sequence such that $t_k \to 0$,  $\{t_k\} \subset [0,T]$. For this sequence, we already have $\lim_{k \to \infty}\|v(t_k)-u_0\|_{H^s_g} =0$ and $\|u_0\|_{H^{s+1}_g} \leq M'$.
On the other hand, it is easy to see that \eqref{C-4} yields
\begin{equation} \label{C-10}
\limsup_{t \to 0+} \|v(t)\|_{H^{s+1}_g} \leq \|u_0\|_{H^{s+1}_g}.
\end{equation}
From \eqref{C-9} and \eqref{C-10}, we get
\[\|v(\cdot,\cdot,t)\|_{H^{s+1}_g} \to \|u_0\|_{H^{s+1}_g}, \quad \mbox{as}~t \to 0+.  \]
This together with the weak convergence \eqref{C-8}, we conclude that
\[\lim_{t \to t_0 +}\|v(t)-v_0\|_{H^{s+1}_g} = 0 \quad \mbox{for}~t_0 \in [0,T_0].\]
By considering the time-reversed problem, i.e., $t \mapsto -t$, we can also obtain the left continuity in the same way. \\

\noindent $\bullet$~(Step 4: Existence): In Step 3, we obtained \eqref{C-7}, and this implies that the limit functions $\eta$ and $v$ are the solutions of \eqref{char}-\eqref{lag_Ku} in the sense of distributions.  Moreover, we also have $h \in \mc([0,T_0];H^s_g(\mathcal S_0))$ since $\rho_0 \in H^s_g(\mathcal S_0)$ and $\eta \in \mc([0,T_0];H^{s+1}_g(\mathcal S_0))$. \\

\noindent $\bullet$~(Step 5: Uniqueness): Let $(h, \eta, v)$ and $(\tilde h,\tilde \eta, \tilde v)$ be the two classical solutions with the same initial data $(\rho_0,u_0)$. Let $\eta$ and $\tilde \eta$ be the trajectories with respect to $v$ and $\tilde v$, respectively, or, equivalently, 
\[\pa_t \eta(\theta,\Omega,t)=v(\theta,\Omega,t) \quad \mbox{and} \quad \pa_t \tilde \eta(\theta,\Omega,t)=\tilde v(\theta,\Omega,t).  \]
Then, by the similar arguments as in Step 2, we obtain the Gr\"onwall's inequality:
\[
\|v(\cdot,\cdot,t)-\tilde v(\cdot,\cdot,t)\|_{L^2_g} \leq C \int_0^t \|v(\cdot,\cdot,s)-\tilde v(\cdot,\cdot,s)\|_{L^2_g}\,ds, 
\]
which yields
\[v=\tilde v \quad \mbox{in}~\mc([0,T_0];L^2_g(\mathcal S_0)).  \]
Hence, one can easily check that 
\[v=\tilde v \quad \mbox{in}~\mc([0,T_0];H^{s+1}_g(\mathcal S_0)).  \]
using the similar argument in Step 3. Finally, this yields
\[h(\theta,\Omega,t)=\tilde h(\theta,\Omega,t) \quad \mbox{in}~\mc([0,T_0];H^s_g(\mathcal S_0)).  \]
\end{proof}

\begin{remark}
Theorem \ref{thm_local} gives the local-in-time regularity of solutions for the Cauchy problem in the Lagrangian coordinates \eqref{lag_Ku}. In order to go back to the Cauchy problem in the Eulerian coordinates \eqref{hydro_Ku}, we need to show that the characteristic flow $(\theta, \Omega) \mapsto (\eta(\theta,\Omega,t),\Omega)$ defined in \eqref{char} is a diffeomorphism for all $t \in (0,T_0)$ for some $T_0 > 0$. For this, it suffices to show that $\det(\nabla_{(\theta,\Omega)}(\eta(\theta,\Omega,t),\Omega)) = \pa_\theta \eta(\theta,\Omega,t) > 0$ for all $(\theta,\Omega,t) \in \T \times \R \times (0,T_0)$. However, it is unclear how to show this. On the other hand, for the identical oscillators, i.e., the system \eqref{hydro_iKu}, it follows from \eqref{char} that
\[
\pa_\theta \eta(\theta,t) = 1 + \int_0^t \pa_\theta v(\theta,s)\,ds.
\]
Then, by Theorem \ref{thm_local}, we have
\[
\pa_\theta \eta(\theta,t) \geq 1 - CM'T_0, \quad t \in [0,T_0].
\]
Thus, by choosing small enough $T_0 >0$, we obtain
\[
\pa_\theta \eta(\theta,t) > 0.
\]
This together with Theorem \ref{thm_local} concludes the local-in-time existence and uniqueness of solutions $(\rho, u)$ to the system \eqref{hydro_iKu} such that $(\rho, u) \in \mc([0,T_0]; H^s(\mathcal S_t)) \times \mc([0,T_0]; H^{s+1}(\mathcal S_t))$ for some $T_0 > 0$ with $(\rho_0,u_0) \in H^s(\mathcal S_0) \times H^{s+1}(\mathcal S_0)$. Here $\mathcal S_t = \{ \theta \in \T : \rho(\theta,t) \neq 0\}$.
\end{remark}
\begin{remark}\label{rmk_inic}We can also directly study the existence of classical solutions to the system \eqref{hydro_Ku} without introducing the Lagrangian formulation \eqref{lag_Ku} under the assumption that the initial density $\rho > 0$ in $\T \times \R$, see Theorem \ref{thm_local2}. In this case, however, we cannot use the strategy in Section \ref{sec_pv} for the synchronization estimate for the case of identical oscillators since it requires that the diameter of support of the initial density in phase is less than $\pi$, see Lemma \ref{lem_eta_bdd}. 
\end{remark}

%
%
%
%
\section{Synchronization estimates for identical oscillators }\label{sec_sync}
In this section, we provide synchronization estimates for identical oscillators, i.e., $g(\Omega) = \delta_{\Omega_0}(\Omega)$ for some $\Omega_0 \in \R$. Without loss of generality, upon rotating frame if necessary, we may assume $\Omega_0 = 0$. Note that in this case the system \eqref{hydro_Ku} reduces to
\begin{align*}
\begin{aligned}
&\pa_t \rho + \pa_\theta(\rho u) = 0, \quad \theta \in \T, \quad t >0,\cr
&\pa_t (\rho u) + \pa_\theta(\rho u^2) = -\frac1m \rho u + \frac Km\rho\int_\T \sin(\theta_* - \theta)\rho(\theta_*,t)\,d\theta_*,
\end{aligned}
\end{align*}
In the Lagrangian formulation, it is given by 
\begin{align} \label{lag_iKu}
\begin{aligned}
& h(\theta,t)\pa_\theta \eta(\theta,t) = \rho_0(\theta),\quad \theta \in \T, \quad t >0,\cr
&\pa_t v(\theta,t) + \frac{1}{m}v(\theta,t) = \frac{K}{m}\int_\T \sin(\eta(\theta_*,t)-\eta(\theta,t))\rho_0(\theta_*)\,d\theta_*.
\end{aligned}
\end{align}
As mentioned in Introduction, we propose two different types of strategies for the synchronization estimates in the following two subsections. 
\subsection{Kinetic energy estimate}\label{sec_energy} We introduce the mean velocity and mean phase:
\[
v_c(t) := \int_\T v(\theta,t) \rho_0(\theta)\,d\theta \quad \mbox{and} \quad \eta_c(t) := \int_\T \eta(\theta,t)\rho_0(\theta)\, d\theta,
\]
respectively, and the kinetic and potential energy functions $\me_k$ and $\me_p$:
\begin{align*}
&\me_k(t) := \frac12\int_\T (v(\theta,t) - v_c(t))^2 \rho_0(\theta)\,d\theta  \quad \mbox{and} \\ 
& \me_p(t) := \frac{K}{2m}\int_{\T \times \T}(1 - \cos(\eta(\theta_*,t) - \eta(\theta,t)))\rho_0(\theta_*)\rho_0(\theta)\,d\theta_*d\theta.
\end{align*}
Note that the quantities above can be reformulated in the Eulerian coordinate as follows: the mean velocity and mean phase are given by
\begin{align*}
u_c(t) := \int_\T u(\theta,t)\rho(\theta,t)\,d\theta \quad \mbox{and} \quad \theta_c(t) := \int_\T \theta \rho(\theta,t)\, d\theta,
\end{align*}
and the corresponding kinetic and potential energy functions are
\begin{align*}
&\me_k(t) = \frac12\int_\T (u(\theta,t) - u_c(t))^2\rho(\theta,t)\,d\theta  \quad \mbox{and}\\
&\me_p(t) = \frac{K}{2m}\int_{\T \times \T} (1 - \cos(\theta - \theta_*)) \rho(\theta,t)\rho(\theta_*,t)\,d\theta d\theta_*,
\end{align*}
respectively. 

\begin{lemma}\label{lem_e_id}Let $(\eta,h,v)$ be a global solution to the system \eqref{lag_iKu}. Then we have the following estimates.
\begin{itemize}
\item[(i)] Mean velocity estimate:
\[
v_c(t) = v_c(0) e^{-\frac tm },
\]
\item[(ii)] Mean phase estimate:
\[
\eta_c(t) = \eta_c(0) + m v_c(0)(1-e^{-\frac tm}),
\]
\item[(iii)] Total energy estimate: 
\[
\me_k(t) + \me_p(t) = \me_k(0) + \me_p(0) - \frac 2m \int_0^t \me_k(s)\,ds.
\]
\end{itemize}
\end{lemma}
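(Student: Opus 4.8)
The plan is to prove all three identities by differentiating the relevant averaged quantities in time, substituting the equations of motion in \eqref{lag_iKu}, and exploiting the antisymmetry of the sine kernel under the exchange of dummy variables $(\theta,\theta_*) \leftrightarrow (\theta_*,\theta)$. Throughout I would use $\pa_t \eta = v$ and the normalization $\int_\T \rho_0\,d\theta = 1$ from \eqref{condi_ir}. For (i), I differentiate $v_c(t) = \int_\T v\rho_0\,d\theta$ and insert the velocity equation, which splits $\frac{d}{dt}v_c$ into a linear damping term $-\frac1m v_c$ and a double integral of $\sin(\eta(\theta_*,t)-\eta(\theta,t))$ against the symmetric measure $\rho_0(\theta)\rho_0(\theta_*)$. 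Swapping $\theta \leftrightarrow \theta_*$ turns the sine into its negative while leaving the measure fixed, so this double integral equals its own negative and hence vanishes. This leaves the scalar ODE $\frac{d}{dt}v_c = -\frac1m v_c$, whose solution is the claimed exponential decay.

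Part (ii) then follows immediately: since $\frac{d}{dt}\eta_c = \int_\T v\rho_0\,d\theta = v_c(t) = v_c(0)e^{-t/m}$, a direct integration in time yields $\eta_c(t) = \eta_c(0) + mv_c(0)(1-e^{-t/m})$.

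The substantive part is (iii). I would first rewrite $\me_k(t) = \frac12\int_\T v^2\rho_0\,d\theta - \frac12 v_c^2$ using the normalization, differentiate, and apply both the velocity equation and $\frac{d}{dt}v_c = -\frac1m v_c$ from (i). The purely local terms combine into $-\frac2m \me_k$, leaving one interaction term $\frac{K}{m}\int_{\T\times\T} v(\theta,t)\sin(\eta(\theta_*,t)-\eta(\theta,t))\rho_0(\theta)\rho_0(\theta_*)\,d\theta\,d\theta_*$. Next I differentiate $\me_p$; the time derivative of $1-\cos(\eta(\theta_*,t)-\eta(\theta,t))$ produces $\sin(\eta(\theta_*,t)-\eta(\theta,t))(v(\theta_*,t)-v(\theta,t))$, and antisymmetrizing the $v(\theta_*,t)$ piece via $\theta \leftrightarrow \theta_*$ collapses $\frac{d}{dt}\me_p$ into exactly $-\frac{K}{m}\int_{\T\times\T} v(\theta,t)\sin(\eta(\theta_*,t)-\eta(\theta,t))\rho_0(\theta)\rho_0(\theta_*)\,d\theta\,d\theta_*$. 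Adding the two derivatives, the interaction terms cancel identically, giving $\frac{d}{dt}(\me_k+\me_p) = -\frac2m\me_k$, and integrating from $0$ to $t$ yields the stated balance.

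The computations are elementary, so the only real care needed is the bookkeeping in (iii): one must verify that the single interaction term surviving in $\frac{d}{dt}\me_k$ matches, with opposite sign and identical constant, the term produced by antisymmetrizing $\frac{d}{dt}\me_p$. The prefactor $\frac{K}{2m}$ in the definition of $\me_p$ is precisely what is needed, since antisymmetrization doubles the $v(\theta,t)$ contribution and thereby restores the coefficient $\frac{K}{m}$, so the cancellation is exact rather than merely up to a constant.
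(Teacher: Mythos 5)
Your proposal is correct and follows essentially the same route as the paper: antisymmetry of the sine kernel for (i), direct integration for (ii), and for (iii) the cancellation of the interaction terms in $\frac{d}{dt}\me_k$ and $\frac{d}{dt}\me_p$ after symmetrizing in $\theta \leftrightarrow \theta_*$. The only cosmetic difference is that you expand $\me_k = \frac12\int_\T v^2\rho_0\,d\theta - \frac12 v_c^2$ and use $v_c' = -\frac1m v_c$, whereas the paper works with $\int_\T (v-v_c)\pa_t v\,\rho_0\,d\theta$ directly; both yield the same $-\frac2m\me_k$ and the same surviving interaction term.
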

\begin{proof} (i) 
Note that
\begin{align*}
\frac{d}{dt}v_c(t) &= \int_\T \pa_t v(\theta,t) \rho_0(\theta)\,d\theta \\
&= \int_\T -\frac 1m v(\theta,t)\rho_0(\theta)\,d\theta + \frac Km \int_{\T^2} \sin(\eta(\theta_*,t)-\eta(\theta,t))\rho_0(\theta_*)\rho_0(\theta)\,d\theta_*d\theta.
\end{align*}
Clearly, the second term vanishes and the desired estimate follows.\\

(ii) It directly follows from (i).\\

(iii) It is clear that
$$\begin{aligned}
\frac{d}{dt}\me_p(t) = \frac{K}{2m}\int_{\T \times \T}\sin(\eta(\theta_*,t) - \eta(\theta,t))(v(\theta_*,t) - v(\theta,t))\rho_0(\theta_*)\rho_0(\theta)\,d\theta_*d\theta.
\end{aligned}$$
On the other hand, we use the equation for $v$ in \eqref{lag_iKu} to find
\begin{equation*}
\begin{aligned}
&\frac{d}{dt}\me_k(t) \cr
&\quad = \int_\T (v(\theta,t)-v_c(t))(\pa_t v(\theta,t)-v_c'(t))\rho_0(\theta)\,d\theta \\
&\quad =  \int_\T (v(\theta,t) - v_c(t)) \pa_t v(\theta,t) \rho_0(\theta)\,d\theta \\
&\quad = \int_\T (v(\theta,t) - v_c(t))\lt(-\frac1m v(\theta,t)+ \frac{K}{m}\int_\T \sin(\eta(\theta_*,t) - \eta(\theta,t))\rho_0(\theta_*)\,d\theta_* \rt)  \rho_0(\theta)\,d\theta\cr
&\quad =:I_1 + I_2,
\end{aligned}
\end{equation*}
where $I_i,i=1,2$ can be estimated as follows.
$$\begin{aligned}
I_1 &= -\frac1m \int_\T (v(\theta,t) - v_c(t))^2 \rho_0(\theta)\,d\theta = -\frac 2m \me_k(t),\cr
I_2 &= \frac Km \int_{\T \times \T}\sin(\eta(\theta_*,t) - \eta(\theta,t))(v(\theta,t) - v_c(t))\rho_0(\theta_*)\rho_0(\theta)\,d\theta_* d\theta\cr
&= \frac {K}{2m} \int_{\T \times \T}\sin(\eta(\theta_*,t) - \eta(\theta,t))(v(\theta,t) - v(\theta_*,t))\rho_0(\theta_*)\rho_0(\theta)\,d\theta_* d\theta\cr
&= -\frac{d}{dt}\me_p(t).
\end{aligned}$$
Combining the above estimates concludes the desired result.
\end{proof}
\begin{remark}\label{rmk_ep}
The function $\me_p$ can be rewritten in terms of the order parameter defined in \eqref{order_pa}:
$$\begin{aligned}
\me_p(t) &= \frac{K}{2m} - \frac{K}{2m}\int_{\T \times \T}\cos(\eta(\theta_*,t)-\eta(\theta,t))\rho_0(\theta_*)\rho_0(\theta)\,d\theta_* d\theta \\
&= \frac{K}{2m} - \frac{K}{2m}r(t) \int_\T \cos(\varphi(t)-\eta(\theta,t))\rho_0(\theta)\,d\theta \\
&= \frac{K}{2m}(1 - r(t)^2).
\end{aligned}$$
\end{remark}
We now state our main results on the decay of kinetic energy $\me_k$ and the convergence of order parameter $r$.
\begin{theorem}\label{thm_gene}
Let $(\eta,h,v)$ be a global solution to the system \eqref{lag_iKu}. Then  we have
\[
\lim_{t \to \infty}\me_k(t) = 0 \quad \mbox{and} \quad \lim_{t \to \infty} r(t) = \sqrt{r_0^2 - \frac{2m}{K}\me_k(0) + \frac{4}{K}\int_0^\infty \me_k(s)\,ds}.
\]
\end{theorem}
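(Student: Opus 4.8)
The plan is to run the whole argument off the total energy identity in Lemma \ref{lem_e_id}(iii) together with the order-parameter reformulation of $\me_p$ in Remark \ref{rmk_ep}. First I would show that $\me_k$ is integrable on $[0,\infty)$ and that the total energy converges; then I would upgrade this integrability to the pointwise decay $\me_k(t)\to 0$ via the Barbalat-type Lemma \ref{Barbalat}; and finally I would pass to the limit in the energy identity to read off $\lim_{t\to\infty} r(t)$.

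For the first step, differentiating the identity in Lemma \ref{lem_e_id}(iii) gives $\frac{d}{dt}(\me_k+\me_p)=-\frac 2m\me_k\le 0$, so the total energy is non-increasing; since $\me_k\ge 0$ and $\me_p\ge 0$, it is also bounded below, hence convergent. Rearranging the integrated identity yields $\frac 2m\int_0^t\me_k(s)\,ds=\me_k(0)+\me_p(0)-\big(\me_k(t)+\me_p(t)\big)\le \me_k(0)+\me_p(0)$, so $\int_0^\infty\me_k(s)\,ds<\infty$ and in particular $\lim_{t\to\infty}\int_0^t\me_k(s)\,ds$ exists.

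The main obstacle is the uniform continuity required to apply Lemma \ref{Barbalat} to $f=\me_k$. Here I would bound $\me_k'$. Identity (iii) also gives $\me_k'=-\frac 2m\me_k-\me_p'$, and the boundedness of the total energy forces $0\le\me_k(t)\le\me_k(0)+\me_p(0)$. For $\me_p'$, using the expression for $\frac{d}{dt}\me_p$ computed in the proof of Lemma \ref{lem_e_id}(iii), the triangle inequality $|v(\theta_*,t)-v(\theta,t)|\le|v(\theta_*,t)-v_c(t)|+|v(\theta,t)-v_c(t)|$, the normalization $\int_\T\rho_0\,d\theta=1$, and the Cauchy--Schwarz inequality, I would obtain $|\me_p'(t)|\le\frac{K}{m}\sqrt{2\me_k(t)}$. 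Hence $\me_k'$ is bounded, so $\me_k$ is Lipschitz and therefore uniformly continuous, and Lemma \ref{Barbalat} applies to conclude $\lim_{t\to\infty}\me_k(t)=0$.

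For the last step I would let $t\to\infty$ in the rearranged identity $\me_p(t)=\me_k(0)+\me_p(0)-\me_k(t)-\frac 2m\int_0^t\me_k(s)\,ds$, using $\me_k(t)\to 0$ and the finiteness of the integral to obtain $\lim_{t\to\infty}\me_p(t)=\me_k(0)+\me_p(0)-\frac 2m\int_0^\infty\me_k(s)\,ds$. Substituting $\me_p(t)=\frac{K}{2m}(1-r(t)^2)$ and $\me_p(0)=\frac{K}{2m}(1-r_0^2)$ from Remark \ref{rmk_ep} and solving for the limit gives $\lim_{t\to\infty}r(t)^2=r_0^2-\frac{2m}{K}\me_k(0)+\frac{4}{K}\int_0^\infty\me_k(s)\,ds$. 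Since $r(t)\ge 0$ by definition and $r(t)^2$ converges, continuity of the square root on $[0,\infty)$ yields the claimed formula for $\lim_{t\to\infty}r(t)$.
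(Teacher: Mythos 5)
Your proposal is correct and follows essentially the same route as the paper: $L^1$-integrability of $\me_k$ from the total energy identity, uniform continuity of $\me_k$ via a Lipschitz bound (your estimate $|\me_p'|\le \frac{K}{m}\sqrt{2\me_k}$ is exactly the paper's bound on the term $I_2=-\me_p'$), Lemma \ref{Barbalat} to get $\me_k\to 0$, and passage to the limit in the energy identity rewritten through Remark \ref{rmk_ep}. No substantive differences.
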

\begin{proof} $\bullet$ (Decay of kinetic energy): In view of Lemmas \ref{Barbalat} and \ref{lem_e_id}, it suffices to show that the kinetic energy function $\me_k(t)$ is uniformly continuous since $\me_k \in L^1(0,\infty)$. Note that the kinetic energy $\me_k$ satisfies
$$\begin{aligned}
\frac{d}{dt}\me_k(t) + \frac2m\me_k(t) &= \frac Km \int_{\T \times \T}\sin(\eta(\theta_*,t) - \eta(\theta,t))(v(\theta,t) - v_c(t))\rho_0(\theta_*)\rho_0(\theta)\,d\theta_* d\theta\cr
&= I_2.
\end{aligned}$$
We then estimate $I_2$ as 
$$\begin{aligned}
I_2 &\leq \frac Km\int_\T |v(\theta,t) - v_c(t)|\rho_0(\theta)\,d\theta \cr
&\leq \frac{K}{m}\lt(\int_\T |v(\theta,t) - v_c(t)|^2\rho_0(\theta)\,d\theta\rt)^{1/2} \cr
&= \frac {\sqrt{2}K}{m} \me_k(t)^{1/2}.
\end{aligned}$$
This yields
\[
\frac{d}{dt}\me_k(t) + \frac2m\me_k(t) \leq \frac {\sqrt{2}K}{m}\me_k(t)^{1/2}.
\]
Since Lemma \ref{lem_e_id} (iii) implies  $\me_k(t) \leq \me_k(0) + \me_p(0)$, we obtain
$$\begin{aligned}
\lt|\frac{d}{dt}\me_k(t) \rt|& \leq \frac2m\me_k(t) + \frac {\sqrt{2}K}{m}\me_k(t)^{1/2} \cr
&\leq \frac 2m ( \me_k(0) + \me_p(0)) +  \frac {\sqrt{2}K}{m}(\me_k(0) + \me_p(0))^{1/2}.
\end{aligned}$$
$\bullet$ (Convergence of order parameter):  It follows from Lemma \ref{lem_e_id} that
\[
\me_k(t) + \frac{K}{2m}\lt(1 - r(t)^2\rt) + \frac{2}{m}\int_0^t \me_k(s)\,ds = \me_k(0) + \frac{K}{2m}\lt(1 - r_0^2\rt).
\]
Since $t \mapsto \int_0^t \me_k(s)\,ds$ is increasing and bounded by $(m/2)\me_k(0) + (K/4)(1 - r_0^2)$ from above, it converges. On the other hand, $\me_k(t)$ decays to zero as $t \to \infty$, and thus we get
$$\begin{aligned}
\lim_{t \to \infty} r(t) &= \lim_{t \to \infty}\sqrt{\frac{2m}{K}\me_k(t) + r_0^2 - \frac{2m}{K}\me_k(0) + \frac{4}{K}\int_0^t \me_k(s)\,ds} \cr
&= \sqrt{r_0^2 - \frac{2m}{K}\me_k(0) + \frac{4}{K}\int_0^\infty \me_k(s)\,ds},
\end{aligned}$$
due to $r \geq 0$. This completes the proof.
\end{proof}
In the rest of this subsection, we further study the time evolution of solutions to the system  \eqref{lag_iKu}. For this, we set 
\[
\ml(t) = \frac12 \int_\T \lt( v(\theta,t) + Kr(t)\sin(\eta(\theta,t) - \varphi(t))\rt)^2 \rho_0(\theta)\,d\theta.
\]
We then show the convergence of $\ml(t)$ to zero as $t$ goes to infinity in the proposition below.
\begin{proposition}Let $(\eta,h,v)$ be a global solution to the system \eqref{lag_iKu}. Then we have the following assertions.
\begin{itemize}
\item[(i)] If 
\bq\label{new_as}
r_0 > \sqrt{\frac{2m}{K}\me_k(0)},
\eq
then we have
\[
\inf_{t \geq 0} r(t) > 0.
\]
\item[(ii)] The function $\ml(t)$ decays to zero as $t \to \infty$:
\[
\ml(t) \to 0 \quad \mbox{as} \quad t \to \infty.
\]
In particular, if \eqref{new_as} holds, we have
\[
\int_\T \sin^2(\eta(\theta,t) - \varphi(t)) \rho_0(\theta)\,d\theta \to 0 \quad \mbox{as} \quad t \to \infty.
\] 
\end{itemize}
\end{proposition}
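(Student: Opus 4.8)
The plan is to treat the two assertions separately: assertion (i) is a direct consequence of the energy balance in Lemma \ref{lem_e_id} together with Remark \ref{rmk_ep}, while assertion (ii) will follow from a decay argument for $\ml$ built on Lemma \ref{lem_decay}.

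For (i), I would combine Lemma \ref{lem_e_id} (iii) with the identity $\me_p(t) = \frac{K}{2m}(1 - r(t)^2)$ from Remark \ref{rmk_ep} to obtain
\[
\me_k(t) + \frac{K}{2m}\bigl(1 - r(t)^2\bigr) + \frac{2}{m}\int_0^t \me_k(s)\,ds = \me_k(0) + \frac{K}{2m}\bigl(1 - r_0^2\bigr).
\]
Solving for $r(t)^2$ gives $r(t)^2 = r_0^2 - \frac{2m}{K}\me_k(0) + \frac{2m}{K}\me_k(t) + \frac{4}{K}\int_0^t \me_k(s)\,ds$, and dropping the two nonnegative terms $\frac{2m}{K}\me_k(t)$ and $\frac{4}{K}\int_0^t\me_k$ yields the pointwise bound $r(t)^2 \ge r_0^2 - \frac{2m}{K}\me_k(0)$ for all $t \ge 0$. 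Under \eqref{new_as} the right-hand side is a strictly positive constant, so $\inf_{t\ge 0} r(t) > 0$ follows at once.

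For (ii), the starting point is that the integrand of $\ml$ is, up to a factor, $m\pa_t v$. Writing the interaction term through the order parameter as in \eqref{lag_order_form}, one has $Kr(t)\sin(\eta - \varphi) = K\int_\T \sin(\eta(\theta,t) - \eta(\theta_*,t))\rho_0(\theta_*)\,d\theta_*$, so the velocity equation in \eqref{lag_iKu} reads $m\pa_t v = -w$ with $w := v + Kr\sin(\eta-\varphi)$; in particular $\ml = \frac{m^2}{2}\int_\T (\pa_t v)^2\rho_0\,d\theta$, which is well-defined even where $r = 0$. I would first record that $\mathcal{K}(t) := \frac{1}{2}\int_\T v^2\rho_0\,d\theta \to 0$: by Lemma \ref{lem_e_id} (i) one has $\int_\T v^2\rho_0\,d\theta = 2\me_k + v_c^2$, and $\me_k \to 0$ by Theorem \ref{thm_gene} while $v_c(t) = v_c(0)e^{-t/m} \to 0$. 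Boundedness of $\ml$ is clear since $|w| \le |v| + K$. Differentiating $\ml$ using $\pa_t v = -w/m$ and $\pa_t\eta = v$ gives the clean identity
\[
\frac{d}{dt}\ml(t) + \frac{2}{m}\ml(t) = K\int_{\T\times\T} w(\theta)\cos(\eta(\theta)-\eta(\theta_*))\bigl(v(\theta)-v(\theta_*)\bigr)\rho_0(\theta_*)\rho_0(\theta)\,d\theta_*\,d\theta =: \beta(t).
\]
Bounding $|v(\theta)-v(\theta_*)| \le |v(\theta)| + |v(\theta_*)|$ and applying the Cauchy--Schwarz inequality yields $|\beta(t)| \le 4K\sqrt{\ml(t)\,\mathcal{K}(t)}$, so $\beta$ is bounded, continuous, and tends to zero. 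Lemma \ref{lem_decay} with $\alpha = 2/m$ then gives $\ml(t) \to 0$. For the refinement, I would expand $2\ml = \int_\T v^2\rho_0 + 2Kr\int_\T v\sin(\eta-\varphi)\rho_0 + K^2r^2\int_\T \sin^2(\eta-\varphi)\rho_0$; the first term is $2\mathcal{K}\to 0$ and the second is $O(\sqrt{\mathcal{K}})\to 0$ by Cauchy--Schwarz with $r \le 1$, so $K^2 r^2\int_\T\sin^2(\eta-\varphi)\rho_0 \to 0$, and under \eqref{new_as} part (i) supplies $\inf r > 0$, forcing $\int_\T\sin^2(\eta-\varphi)\rho_0 \to 0$.

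The main obstacle is the control of the correction term $\beta(t)$: all the dissipation is carried by $-\frac{2}{m}\ml$, and a priori the cross term only sees $|w| \lesssim |v| + K$ and is merely bounded. The decisive point is that after the bound $|v(\theta)-v(\theta_*)| \le |v(\theta)| + |v(\theta_*)|$, every surviving factor involves a velocity that is square-integrable against $\rho_0$ with vanishing norm, so Cauchy--Schwarz converts the boundedness of $\ml$ and the decay of $\mathcal{K}$ into decay of $\beta$. A secondary technical point is that $\varphi$ is only defined where $r > 0$; I circumvent this by carrying out the differentiation of $\ml$ with the double-integral form of $w$, and invoking the order-parameter form only in the final refinement, where \eqref{new_as} guarantees $r > 0$.
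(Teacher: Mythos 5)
Your proof is correct and follows essentially the same route as the paper: part (i) rests on the same energy identity from Lemma \ref{lem_e_id} (iii) combined with Remark \ref{rmk_ep} (you read off the pointwise bound $r(t)^2 \geq r_0^2 - \tfrac{2m}{K}\me_k(0)$ directly, where the paper argues by contradiction at a hypothetical vanishing time), and part (ii) differentiates $\ml$ in exactly the same way, obtains the dissipative term $-\tfrac{2}{m}\ml$ from the velocity equation, and closes with Lemma \ref{lem_decay}. The only minor variation is in handling the cross term: you keep the exact identity $\ml' + \tfrac{2}{m}\ml = \beta$ and show $\beta \to 0$ via $|\beta| \lesssim \sqrt{\ml\,\mathcal{K}}$ with $\mathcal{K} = \tfrac12\int_\T v^2\rho_0\,d\theta \to 0$, whereas the paper bounds it by $2\sqrt{2}K\,\ml^{1/2}\me_k^{1/2}$ and absorbs part into the dissipation with Young's inequality before invoking the same lemma --- both are sound, and your observation that $w = -m\pa_t v$ cleanly sidesteps the issue of $\varphi$ being undefined where $r=0$.
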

\begin{proof} (i) It is clear from Theorem \ref{thm_gene} that $\lim_{t \to \infty} r(t) > 0$.  Suppose that there exists $t_0 \in (0,\infty)$ such that 
\[
\lim_{t \to t_0-}r(t) = 0.
\]
On the other hand, it follows from Lemma \ref{lem_e_id} (iii) and Remark \ref{rmk_ep} that
\[
\me_k(t) - \frac{K}{2m} r(t)^2 + \frac2m \int_0^t \me_k(s)\,ds = \me_k(0) - \frac{K}{2m} r_0^2,
\]
and taking the limit $t \to t_0-$ gives the following contradiction.
\[
0 \leq \me_k(t_0) + \frac2m \int_0^{t_0} \me_k(s)\,ds = \me_k(0) - \frac{K}{2m} r_0^2 < 0.
\]
Thus we have
\[
\inf_{t \geq 0} r(t) > 0.
\]

(ii) First, we notice that the following identity holds:
\bq \label{order_reform}
r(t) \sin(\varphi(t)-\eta(\theta,t)) = \int_\T \sin(\eta(\theta_*,t)-\eta(\theta,t))\rho_0(\theta_*)\,d\theta_*.
\eq
Taking the time derivative to $\ml$, we find
$$\begin{aligned}
\frac{d}{dt}\ml(t) &= \int_\T \lt(v(\theta,t) + Kr(t)\sin(\eta(\theta,t) - \varphi(t))\rt)\cr
&\hspace{2cm} \times \lt(\pa_t v(\theta,t) + K \frac{\pa}{\pa t}\Big(r(t)\sin(\eta(\theta,t) - \varphi(t))\Big) \rt) \rho_0(\theta)\,d\theta\cr
&=: J_1 + J_2.
\end{aligned}$$
Using \eqref{lag_order_form}, we can easily estimate $J_1$ as 
\begin{align*}
J_1 &= \int_\T \lt(v(\theta,t) + Kr(t)\sin(\eta(\theta,t) - \varphi(t))\rt) \pa_t v(\theta,t) \rho_0(\theta)\,d\theta \\
&=- \frac 1m\int_\T \lt( v(\theta,t) + Kr(t)\sin(\eta(\theta,t) - \varphi(t))\rt)^2 \rho_0(\theta)\,d\theta \\
&= -\frac{2}{m}\ml(t).
\end{align*}
For the estimate of $J_2$, we use \eqref{order_reform} to find
$$\begin{aligned}
J_2 &= -K\int_{\T \times \T} \lt(v(\theta,t) + Kr(t)\sin(\eta(\theta,t) - \varphi(t))\rt)\cr
&\hspace{3cm} \times \cos(\eta(\theta_
*,t) - \eta(\theta,t))(v(\theta_*,t) - v(\theta,t))\rho_0(\theta_*)\rho_0(\theta)\,d\theta_* d\theta\cr
&\leq K\lt(\int_\T \lt(v(\theta,t) + Kr(t)\sin(\eta(\theta,t) - \varphi(t))\rt)^2 \rho_0(\theta)\,d\theta\rt)^{1/2}\cr
&\hspace{2cm} \times \lt(\int_{\T \times \T}\cos^2(\eta(\theta_
*,t) - \eta(\theta,t))(v(\theta_*,t) - v(\theta,t))^2\rho_0(\theta_*)\rho_0(\theta)\,d\theta_* d\theta\rt)^{1/2}\cr
&\leq K(2\ml(t))^{1/2}\lt(\int_{\T \times \T}(v(\theta_*,t) - v(\theta,t))^2\rho_0(\theta_*)\rho_0(\theta)\,d\theta_* d\theta\rt)^{1/2}.
\end{aligned}$$
Note that 
$$\begin{aligned}
\int_{\T \times \T}(v(\theta_*,t) - v(\theta,t))^2\rho_0(\theta_*)\rho_0(\theta)\,d\theta_* d\theta &= 2\int_{\T \times \T}(v(\theta_*,t) - v_c(t))^2\rho_0(\theta_*)\rho_0(\theta)\,d\theta_* d\theta\cr
&=4\me_k(t).
\end{aligned}$$
Thus we obtain
\[
J_2 \leq 2\sqrt2 K (\ml(t))^{1/2} (\me_k(t))^{1/2} \leq 2\sqrt2 K\lt(\e \ml(t) + \frac{1}{4\e}\me_k(t) \rt),
\]
for some $\e > 0$ which will be determined later.

Combining all of the above estimates, we have
\bq\label{est_ml}
\frac{d}{dt} \ml(t) + \frac{2}{m}\lt(1 - \sqrt2 K\e \rt)\ml(t) \leq \frac{\sqrt2 K}{2\e} \me_k(t).
\eq
We now choose $\e > 0$ small enough such that $1 - \sqrt2 K\e > 0$ and use Lemma \ref{lem_decay} and Thoerem \ref{thm_gene} to obtain
\bq\label{decay_ml}
\ml(t) \to 0 \quad \mbox{as} \quad t \to \infty.
\eq
Finally, since 
$$\begin{aligned}
&\int_\T K^2 r(t)^2 \sin^2(\eta(\theta,t) - \varphi(t))\rho_0(\theta)\,d\theta \cr
&\quad = \int_\T \lt( v(\theta,t) - v(\theta,t) + Kr(t)\sin(\eta(\theta,t) - \varphi(t))\rt)^2 \rho_0(\theta)\,d\theta\cr
&\quad \leq 4\ml(t) + 2\int_\T (v - v_c(t))^2 \rho_0(\theta)\,d\theta + 2v_c(t)^2,\cr
&\quad = 4\ml(t) + 4\me_k(t) + 2v_c(t)^2,
\end{aligned}$$
it follows from \eqref{decay_ml}, Thoerem \ref{thm_gene}, and Lemma \ref{lem_e_id} (i) that
\[
\int_\T \lt( r(t)\sin(\eta(\theta,t) - \varphi(t))\rt)^2 \rho_0(\theta)\,d\theta \to 0 \quad \mbox{as} \quad t \to \infty.
\]
This together with the result (i) concludes our desired result.
\end{proof}
\begin{remark} Since $\me_k \in L^1(0,\infty)$, we find from \eqref{est_ml} that $\ml \in L^1(0,\infty)$. This further yields 
\[
\int_0^\infty\int_\T \lt(r(t)\sin(\eta(\theta,t) - \varphi(t))\rt)^2 \rho_0(\theta)\,d\theta\,dt < \infty.
\]
Moreover, if the initial data satisfy \eqref{new_as}, we have
\[
\int_0^\infty\int_\T \lt(\sin(\eta(\theta,t) - \varphi(t))\rt)^2 \rho_0(\theta)\,d\theta\,dt < \infty.
\]
\end{remark}
\subsection{Phase \& velocity diameter estimates}\label{sec_pv}
In this part, we provide phase and velocity diameter estimates showing the exponential synchronization behavior under certain assumptions on the initial configurations. For this, we introduce the phase and velocity diameter functions as follows.
\[
d_\eta(t) := \max_{\theta, \theta_* \in \overline\mms_t}\,|\eta(\theta,t) - \eta(\theta_*,t)| \quad \mbox{and} \quad d_v(t) := \max_{\theta, \theta_* \in \overline\mms_t}\,|v(\theta,t) - v(\theta_*,t)|.
\]
For the synchronization estimates, we derive Gr\"onwall-type differential inequalities for $d_\eta$ and $d_v$. We first show differentiability of these functions in the lemma below.
\begin{lemma} \label{lem_diff}Let $T>0$ and $(\eta,h,v)$ be a solution to the system \eqref{lag_iKu} on the interval $[0,T]$.
Then, there exists a $T^* \in (0,T]$ such that 
\begin{itemize}
\item[(i)] $\pa_\theta \eta(\theta,t) > 0~\mbox{for all}~ (\theta,t) \in \T \times [0,T^*),$
\item[(ii)] $d_\eta'(t) ~\mbox{and}~ d_\eta''(t) ~\mbox{exist on the interval}~ [0,T^*).$
\end{itemize}
\end{lemma}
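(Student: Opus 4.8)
The plan is to prove (i) exactly as in the remark following Theorem \ref{thm_local}, and then to exploit the resulting strict monotonicity of $\theta \mapsto \eta(\theta,t)$ to reduce $d_\eta$ to a difference of two genuinely smooth functions of $t$, thereby sidestepping the fact that a maximum of smooth functions need not be differentiable. For (i), I would integrate the characteristic relation \eqref{char} to write $\eta(\theta,t) = \theta + \int_0^t v(\theta,s)\,ds$, so that $\pa_\theta\eta(\theta,t) = 1 + \int_0^t \pa_\theta v(\theta,s)\,ds$. Since $s \geq 1$, Theorem \ref{thm_local} gives $v \in \mc([0,T];H^{s+1})$ with the Sobolev embedding $H^{s+1}(\T)\hookrightarrow \mc^1(\T)$, so that $C_0 := \sup_{0\leq t\leq T}\|\pa_\theta v(\cdot,t)\|_{L^\infty} < \infty$. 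Hence $\pa_\theta\eta(\theta,t) \geq 1 - C_0 t$, which is strictly positive once $t < 1/(2C_0)$; I would then set $T^* := \min\{T, 1/(2C_0)\} \in (0,T]$.

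For (ii), the key point is that (i) renders $\theta \mapsto \eta(\theta,t)$ strictly increasing on $[0,T^*)$ for each fixed $t$ (working on a lift of $\eta$ to $\R$, legitimate under the standing assumption of this subsection that $\overline{\mms_0}$ is a compact arc of phase diameter $<\pi$; shrinking $T^*$ if necessary, $d_\eta(t)$ stays below $\pi$ by continuity from $d_\eta(0)<\pi$, so that the geodesic distance on $\T$ agrees with the lifted difference). Writing $d_\eta(t) = \max_{\theta}\eta(\theta,t) - \min_{\theta_*}\eta(\theta_*,t)$ over the \emph{fixed} compact set $\overline{\mms_0}$, monotonicity forces the spatial maximum and minimum to be attained at the time-independent labels $\theta_M := \max\overline{\mms_0}$ and $\theta_m := \min\overline{\mms_0}$, for \emph{every} $t \in [0,T^*)$. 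Consequently the extremizing pair never changes, and I obtain the exact identity $d_\eta(t) = \eta(\theta_M,t) - \eta(\theta_m,t)$ on $[0,T^*)$.

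It then remains to check that $t \mapsto \eta(\theta,t)$ is $\mc^2$ (in fact $\mc^3$) for each fixed label, which I would get by bootstrapping the two equations $\pa_t\eta = v$ and $\pa_t v = -\frac1m v + \frac Km\int_\T \sin(\eta(\theta_*,t)-\eta(\theta,t))\rho_0(\theta_*)\,d\theta_*$. The right-hand side of the $v$-equation is continuous in $t$, so $v(\theta,\cdot)\in\mc^1$ and $\eta(\theta,\cdot)\in\mc^2$; differentiating the integral term once more yields $\frac Km\int_\T \cos(\eta_*-\eta)(v_*-v)\rho_0\,d\theta_*$, which is again continuous (as $\rho_0 \in L^1$ and $\eta,v$ are continuous in $t$), so the right-hand side is $\mc^1$ in $t$, giving $v(\theta,\cdot)\in\mc^2$ and $\eta(\theta,\cdot)\in\mc^3$. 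Therefore $d_\eta = \eta(\theta_M,\cdot) - \eta(\theta_m,\cdot) \in \mc^2([0,T^*))$, and in particular $d_\eta'$ and $d_\eta''$ exist, proving (ii); note this also identifies $d_\eta'(t) = v(\theta_M,t)-v(\theta_m,t)$, which will feed the coupled second-order Gr\"onwall system later.

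The main obstacle is conceptual rather than computational: $d_\eta$ is an envelope (a maximum over a one-parameter family of smooth trajectories) and is therefore only Lipschitz in general, not twice differentiable. The entire difficulty is thus concentrated in showing that the maximizing pair is \emph{frozen in time}, which is precisely what the strict monotonicity from (i) provides; once the max is replaced by a single explicit pair $(\theta_M,\theta_m)$ of trajectories, differentiability is immediate. The one technical wrinkle to handle with care is the circle topology — passing to a lift on an arc and keeping $d_\eta(t)<\pi$ — which is exactly why the short time horizon $T^*$ and the arc assumption on $\overline{\mms_0}$ (cf. Remark \ref{rmk_inic}) are indispensable.
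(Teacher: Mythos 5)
Your proof is correct and follows essentially the same route as the paper: part (i) by integrating $\pa_\theta \eta(\theta,t) = 1 + \int_0^t \pa_\theta v(\theta,s)\,ds$ and bounding $\|\pa_\theta v\|_{L^\infty}$, and part (ii) by observing that the non-crossing of characteristics freezes the extremizing pair, so that $d_\eta$ becomes a difference of two smooth trajectories (your explicit time-regularity bootstrap and the lift to an arc only make explicit what the paper leaves implicit). The one substantive difference is that the paper takes $T^*$ to be the \emph{supremum} of the times up to which $\pa_\theta\eta>0$ holds rather than the explicit value $\min\{T,1/(2C_0)\}$; this maximality is what is exploited later in the proof of Theorem \ref{thm_diam_decay}, where $T^*<T$ is shown to force $\lim_{t\to T^*-}\pa_\theta\eta=0$, so you should define $T^*$ as that supremum if the lemma is to feed the later argument.
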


\begin{proof}
(i) Note that $\eta$ defined in \eqref{char} satisfies
\[
\eta(\theta,t) = \theta + \int_0^t v(\theta,s)\,ds \quad \mbox{and}\quad \pa_\theta \eta(\theta,t) = 1 + \int_0^t \pa_\theta v(\theta,s)\,ds.
\]
Thus, we get
\[
\pa_\theta \eta(\theta,t) > 1 - \sup_{0 \leq t \leq T}\| \pa_\theta v(\cdot,t)\|_{L^\infty} t \quad \mbox{for all}\quad t>0.
\]
Thus, the set defined by
\[
\mathcal T = \{t>0~|~\pa_\theta \eta(\theta,s) > 0, \quad 0 \leq s \leq t \}
\]
is nonempty, and thus the assertion (i) is obtained for $T^* := \sup \mathcal T$.\\

(ii) In view of (i), we find that there is no intersection between the characteristic curves starting from different points $\theta$ and $\theta_*$ on $[0,T^*)$. Accordingly, indices $M(t)$ and $m(t)$ which give $d_\eta(t) = \eta(\theta_{M(t)},t) - \eta(\theta_{m(t)},t)$ stay fixed on that time interval, i.e., the indices $M(t)$ and $m(t)$ are constants on $[0,T^*)$. This yields that $d_\eta$ and $d_\eta'$ are differentiable. 
\end{proof}

We now set $C_0$ and $D_0$ as follows.
\begin{equation*}
\begin{aligned}
C_0 := \max\lt\{d_\eta(0), d_\eta(0) + m d_v(0)\rt\}, \quad D_0 := \frac{\sin C_0}{C_0}.
\end{aligned}
\end{equation*}

\begin{lemma} \label{lem_eta_bdd} (Uniform boundedness of phase diameter) Let $T>0$ and $(\eta,h,v)$ be a solution to the system \eqref{lag_iKu} on the interval $[0,T]$. Suppose that the initial data satisfy $0 < C_0 < \pi.$
Then, we have
\[
d_\eta(t) \leq C_0, \quad 0 \leq t < T^*,
\]
where $T^*$  appeared in Lemma \ref{lem_diff} and $C_0 > 0$ is independent of $t$. 
\end{lemma}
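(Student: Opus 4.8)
The plan is to reduce the statement to a scalar second-order differential inequality for $d_\eta$ and then close it by a continuity argument. By Lemma \ref{lem_diff}, on $[0,T^*)$ the indices realizing the phase diameter are frozen, so we may write $d_\eta(t)=\eta(\theta_M,t)-\eta(\theta_m,t)$ for fixed $\theta_M,\theta_m\in\overline{\mms_t}$, and $d_\eta$ is twice differentiable there. Since $\pa_t\eta=v$ along characteristics, differentiating gives $d_\eta'(t)=v(\theta_M,t)-v(\theta_m,t)$, and using the velocity equation in \eqref{lag_iKu},
\[
m\,d_\eta''(t) = -\,d_\eta'(t) + K\int_\T\big[\sin(\eta(\theta_*,t)-\eta(\theta_M,t))-\sin(\eta(\theta_*,t)-\eta(\theta_m,t))\big]\rho_0(\theta_*)\,d\theta_*.
\]

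First I would estimate the nonlocal term, which is the crux of the argument. Applying the sum-to-product identity, the bracket equals $-2\sin\!\big(\tfrac{d_\eta(t)}{2}\big)\cos\!\big(\eta(\theta_*,t)-\bar\eta(t)\big)$ with $\bar\eta(t):=\tfrac12(\eta(\theta_M,t)+\eta(\theta_m,t))$. Because $\theta_M,\theta_m$ are the extremal indices, every $\eta(\theta_*,t)$ lies between $\eta(\theta_m,t)$ and $\eta(\theta_M,t)$, so $|\eta(\theta_*,t)-\bar\eta(t)|\le d_\eta(t)/2$; hence, as long as $d_\eta(t)<\pi$, the cosine factor is bounded below by $\cos(d_\eta(t)/2)\ge 0$. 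Combined with $\sin(d_\eta/2)\ge 0$ and $\int_\T\rho_0=1$, this yields
\[
m\,d_\eta''(t) + d_\eta'(t) + K\sin(d_\eta(t)) \le 0, \qquad \text{whenever } 0\le d_\eta(t) < \pi.
\]
Pinning down the sign here, via the positivity of the cosine factor guaranteed by $d_\eta<\pi$ together with the ordering of the $\eta(\theta_*,t)$, is the step I expect to be the main obstacle.

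Since $\sin(d_\eta)\ge 0$ on $[0,\pi]$, the displayed inequality gives $\frac{d}{dt}\big(m\,d_\eta'(t)+d_\eta(t)\big)\le 0$, so $m\,d_\eta'(t)+d_\eta(t)\le m\,d_\eta'(0)+d_\eta(0)$. Using $d_\eta'(0)=v(\theta_M,0)-v(\theta_m,0)\le d_v(0)$ and the definition of $C_0$, the right-hand side is at most $C_0$. I would then rewrite $m\,d_\eta'+d_\eta\le C_0$ as $\frac{d}{dt}\big(e^{t/m}d_\eta(t)\big)\le \tfrac{C_0}{m}e^{t/m}$ and integrate from $0$, obtaining $e^{t/m}d_\eta(t)\le d_\eta(0)+C_0(e^{t/m}-1)\le C_0\,e^{t/m}$, i.e.\ $d_\eta(t)\le C_0$, valid on any interval on which $d_\eta<\pi$.

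It remains to upgrade this conditional bound to the whole interval $[0,T^*)$ by a standard continuity (bootstrap) argument. Set $T^{**}:=\sup\{\tau\in[0,T^*): d_\eta(t)<\pi \text{ for all } t\in[0,\tau]\}$, which is positive since $d_\eta(0)\le C_0<\pi$ and $d_\eta$ is continuous. On $[0,T^{**})$ the previous step applies and forces $d_\eta(t)\le C_0<\pi$; were $T^{**}<T^*$, continuity would give $d_\eta(T^{**})\le C_0<\pi$, contradicting the maximality of $T^{**}$. Hence $T^{**}=T^*$ and $d_\eta(t)\le C_0$ for all $t\in[0,T^*)$, as claimed. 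Note that, in contrast to the exponential-decay estimate to follow, this boundedness does not require the convexity bound $\sin(d_\eta)\ge D_0\,d_\eta$ or Lemma \ref{lem_gronwall}; $D_0$ enters only in the subsequent synchronization rate.
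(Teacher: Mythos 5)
Your proof is correct and follows essentially the same route as the paper: freeze the extremal indices via Lemma \ref{lem_diff}, derive the second-order differential inequality for $d_\eta$, show the nonlocal term is nonpositive while $d_\eta<\pi$, integrate to get $d_\eta\le C_0$, and close with a continuity argument. The only cosmetic differences are that you integrate the quantity $m\,d_\eta'+d_\eta$ (and carry the sharper $-K\sin(d_\eta)$ bound, which the paper defers to Proposition \ref{prop_eta_decay}) whereas the paper integrates $e^{t/m}d_\eta'$ directly, and your bootstrap set is defined by $d_\eta<\pi$ rather than $d_\eta<C_0$; both variants are valid.
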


\begin{proof}
Suppose that the phase diameter satisfies $d_\eta(t) < \pi$ on the interval $[0,T_0)$ for some $T_0  \leq T^*$ and we choose $\theta_{M(t)}$ and $\theta_{m(t)}$ such that $d_\eta(t) = \eta(\theta_{M(t)},t) - \eta(\theta_{m(t)},t)$ for $t \in [0,T_0)$. Then, by Lemma \ref{lem_diff}, we get $M(t) \equiv M$ and $m(t) \equiv m$ for some $M, m \in \overline\mms_t$ and thus $d_\eta(t) \in \mc^2((0,T_0))$. Thus we obtain from \eqref{lag_iKu} that 
$$\begin{aligned}
d_\eta''(t) + \frac{1}{m}d_\eta' (t) &= \frac Km \int_\T \lt(\sin(\eta(\theta_*,t)-\eta(\theta_M,t))-\sin(\eta(\theta_*,t)-\eta(\theta_m,t))\rt)\rho_0(\theta_*)\,d\theta_*  \\
&\leq 0,
\end{aligned}$$
for $0 < t < T_0$, which implies
\begin{equation} \label{E-1}
d_\eta(t) \leq d_\eta(0) + m\lt(1-e^{-\frac tm}\rt)d_\eta'(0) \leq C_0, \qquad 0 \leq t <T_0,
\end{equation}
due to $|d_\eta'(0)| \leq d_v(0)$. We then define a set
\[
\mathcal T_1 := \{t > 0~| ~ d_\eta(s) < C_0, ~ \forall s \in [0,t) \} \cap [0,T^*).
\]
Due to the assumption on the initial condition, $\mathcal T_1$ is nonempty. We now claim that $T_1^* := \sup \mathcal T_1 = T^*$. Suppose, contrary to our claim, that $T_1^* < T^*$. Then, the definition of $T_1^*$ gives
\begin{equation} \label{ctrd}
\lim_{t \to T_1^*-}d_\eta(t) = C_0.
\end{equation}
The relation \eqref{E-1}, however, yields 
\[
\lim_{t \to T_1^*-}d_\eta(t) \leq d_\eta(0) + m\lt(1-e^{-\frac {T_1^*}{m}}\rt)d_v(0) < C_0,
\]
which is contradictory to \eqref{ctrd}. Thus, we have $T_1^* = T^*$ and the conclusion readily follows. 
\end{proof}
Using the above uniform boundedenss of the phase diameter, we show the exponential decay of the phase diameter function on the time interval $[0,T^*)$. 
\begin{proposition} \label{prop_eta_decay} 
(Exponential decay of phase diameter)  Let $T>0$ and $(\eta,h,v)$ be a solution to the system \eqref{lag_iKu} on the interval $[0,T]$. Suppose that the initial data satisfy $0 < C_0 < \pi$.
\begin{itemize}
 \item [(i)] If $0 < 4mkD_0 < 1$, then we have
\[
d_\eta(t) \leq d_\eta(0)e^{-\nu_1 t} + m \frac{e^{-\nu_2 t}-e^{-\nu_1 t}}{\sqrt{1-4mK D_0}}(d_v(0) + \nu_1 d_\eta(0)), \quad t \in [0,T^*),
\]
where
\[
\nu_1 := \frac{1+\sqrt{1-4mK D_0}}{2m} \quad \mbox{and} \quad \nu_2 := \frac{1-\sqrt{1-4mK D_0}}{2m},
\]
 \item [(ii)] If $1 \leq 4mk D_0$, then we have
\begin{align*}
d_\eta(t) \leq e^{-\frac{1}{2m}t}\left(d_\eta(0) + \left(\frac{d_\eta(0)}{2m} + d_v(0)\right)t\right), \quad t \in [0,T^*).
\end{align*}
 \end{itemize}
 Here $T^*>0$ appeared in Lemma \ref{lem_diff}.
\end{proposition}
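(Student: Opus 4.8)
The plan is to upgrade the differential inequality already obtained in the proof of Lemma~\ref{lem_eta_bdd} into a damped linear second-order inequality to which Lemma~\ref{lem_gronwall} applies directly. Throughout I work on the interval $[0,T^*)$, where Lemma~\ref{lem_diff} guarantees that the maximizing and minimizing indices $\theta_M,\theta_m$ realizing $d_\eta(t)=\eta(\theta_M,t)-\eta(\theta_m,t)$ are fixed and $d_\eta\in\mc^2$, while Lemma~\ref{lem_eta_bdd} provides the uniform bound $d_\eta(t)\le C_0<\pi$. The starting point is the identity
\[
d_\eta''(t) + \frac1m d_\eta'(t) = \frac Km\int_\T\bigl(\sin(\eta(\theta_*,t)-\eta(\theta_M,t)) - \sin(\eta(\theta_*,t)-\eta(\theta_m,t))\bigr)\rho_0(\theta_*)\,d\theta_*,
\]
which follows from \eqref{lag_iKu} exactly as in Lemma~\ref{lem_eta_bdd}; the goal is to bound the right-hand side from above by $-\tfrac{KD_0}{m}d_\eta(t)$.

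The key step is a sum-to-product manipulation. Rewriting the integrand as
\[
\sin(\eta(\theta_*,t)-\eta(\theta_M,t)) - \sin(\eta(\theta_*,t)-\eta(\theta_m,t)) = -2\sin\!\Big(\tfrac{d_\eta(t)}{2}\Big)\cos\!\Big(\eta(\theta_*,t) - \tfrac{\eta(\theta_M,t)+\eta(\theta_m,t)}{2}\Big),
\]
I use that $\eta(\theta_m,t)\le\eta(\theta_*,t)\le\eta(\theta_M,t)$ forces the cosine argument into $[-d_\eta/2,d_\eta/2]\subset(-\pi/2,\pi/2)$, so the cosine is positive and bounded below by $\cos(d_\eta(t)/2)$. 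Since $\int_\T\rho_0\,d\theta_*=1$ and $\sin(d_\eta(t)/2)>0$, this produces the upper bound $-\tfrac Km\sin(d_\eta(t))$ for the right-hand side. Finally, the monotonicity of $x\mapsto\sin x/x$ on $(0,\pi)$ together with $d_\eta(t)\le C_0$ yields $\sin(d_\eta(t))\ge D_0\,d_\eta(t)$ with $D_0=\sin C_0/C_0$, whence
\[
m\,d_\eta''(t) + d_\eta'(t) + KD_0\,d_\eta(t) \le 0, \qquad t\in[0,T^*).
\]

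With this inequality in hand I invoke Lemma~\ref{lem_gronwall} with $a=m$, $b=1$, $c=KD_0$, so that $b^2-4ac=1-4mKD_0$ and the lemma's decay exponents become exactly $\nu_1$ and $\nu_2$. Case (i) is the regime $0<4mKD_0<1$ (positive discriminant) and case (ii) is $4mKD_0\ge1$ (nonpositive discriminant), reproducing the two displayed bounds with $x_0=d_\eta(0)$, $x_1=d_\eta'(0)$. The remaining step is to replace $d_\eta'(0)$ by $d_v(0)$: since $d_\eta'(0)=v(\theta_M,0)-v(\theta_m,0)\le d_v(0)$, and the coefficients multiplying $x_1$ in Lemma~\ref{lem_gronwall}—namely $m(e^{-\nu_2 t}-e^{-\nu_1 t})/\sqrt{1-4mKD_0}$ in case (i) and $t\,e^{-\frac{1}{2m}t}$ in case (ii)—are nonnegative for $t\ge0$ (using $\nu_1>\nu_2>0$, valid since $D_0>0$), this substitution only enlarges the right-hand side and gives the stated estimates.

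I expect the sign control in the second paragraph to be the only genuine obstacle: everything hinges on $d_\eta<\pi$, which is precisely what keeps the cosine integral positive—so that the nonlocal interaction acts as a bona fide restoring force rather than reversing sign—and what makes $\sin x/x$ monotone so the linear comparison $\sin d_\eta\ge D_0 d_\eta$ holds. The remaining tasks, matching the constants in Lemma~\ref{lem_gronwall} and performing the monotone replacement of $d_\eta'(0)$ by $d_v(0)$, are routine once the sign of the restoring term is secured.
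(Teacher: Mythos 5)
Your proposal is correct and follows essentially the same route as the paper's proof: the same sum-to-product identity, the same lower bound $\cos(\eta(\theta_*,t)-\tfrac{\eta(\theta_M,t)+\eta(\theta_m,t)}{2})\geq\cos(d_\eta(t)/2)$ leading to $-\tfrac{K}{m}\sin d_\eta(t)$, the same linearization $\sin d_\eta \geq D_0 d_\eta$ via Lemma \ref{lem_eta_bdd}, and the same application of Lemma \ref{lem_gronwall} with $a=m$, $b=1$, $c=KD_0$ followed by the replacement $|d_\eta'(0)|\leq d_v(0)$. Your explicit justification that the maximizer/minimizer structure confines the cosine argument to $[-d_\eta/2,d_\eta/2]$ is slightly more detailed than the paper's, but the argument is identical in substance.
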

\begin{proof} Similarly as before, we choose $\theta_M \in \overline\mms_t$ and $\theta_m \in \overline\mms_t$ such that $d_\eta(t) = \eta(\theta_M,t) - \eta(\theta_m, t)$ on $[0,T^*)$. Then, by definition, we have
\[
d_\eta'(t) = v(\theta_M,t) - v(\theta_m,t), \quad d_\eta''(t) = v'(\theta_M,t) - v'(\theta_m,t), \quad t \in (0,T^*).
\]
Then we find from \eqref{lag_iKu} that $d_\eta$ satisfies
$$\begin{aligned}
d_\eta''(t) + \frac 1m d_\eta'(t) &= \frac Km \int_\T \lt(\sin(\eta(\theta_*,t)-\eta(\theta_M,t))-\sin(\eta(\theta_*,t)-\eta(\theta_m,t))\rt)\rho_0(\theta_*)\,d\theta_* \\
&=-\frac{2K}{m}\int_\T \cos\left(\eta(\theta_*,t) - \frac{\eta(\theta_M,t)+\eta(\theta_m,t)}{2} \right)\sin \lt(\frac{d_\eta(t)}{2}\rt)\rho_0(\theta_*)\,d\theta_* \\
&\leq -\frac{K}{m} \sin d_\eta(t), 
\end{aligned}$$
for $t \in (0,T^*)$, where we used Lemma \ref{lem_eta_bdd} and 
\[
\cos\left(\eta(\theta_*,t) - \frac{\eta(\theta_M,t)+\eta(\theta_m,t)}{2} \right) \geq \cos \lt(\frac{d_\eta(t)}{2}\rt).
\]
We now use the relation 
\[
\frac{\sin d_\eta(t)}{d_\eta(t)} > \frac{\sin C_0}{C_0} = D_0 \quad \mbox{for} \quad 0\leq d_\eta(t) < C_0 < \pi
\] 
to find 
\begin{equation} \label{E-2}
md_\eta''(t) + d_\eta'(t) + K D_0 d_\eta(t) \leq 0, \quad t \in (0,T^*).
\end{equation}
Using Lemma \ref{lem_gronwall} (i), we obtain
\[
d_\eta(t) \leq d_\eta(0)e^{-\nu_1 t} + m \frac{e^{-\nu_2 t}-e^{-\nu_1 t}}{\sqrt{1-4mK D_0}}(d_v(0) + \nu_1 d_\eta(0)), \quad t \in [0,T^*),
\]
due to $\nu_1 > \nu_2$ and $|d_\eta'(0)| \leq d_v(0)$. This proves $(i)$. The inequality (ii) can also be obtained by applying Lemma \ref{lem_gronwall} (ii) to \eqref{E-2}. This concludes the desired results.
\end{proof}

\begin{proposition} \label{prop_v_decay}
(Exponential decay of velocity diameter)  Suppose that $0 < C_0 < \pi$. Then, the following assertions hold.
\begin{itemize}
 \item [(i)] If $0 < 4mkD_0 < 1$, then we have
$$\begin{aligned}
d_v(t) &\leq \lt(d_v(0) + \frac{K(C_1 - d_\eta(0))}{1-m \nu_1}  - \frac{KC_1}{1 - m \nu_2}\rt)e^{-\frac tm}  \cr
&\quad + \frac{KC_1}{1 - m\nu_2 } e^{-\nu t} - \frac{K(C_1 - d_\eta(0))}{1 - m\nu_1} e^{-\nu_1 t}, 
\end{aligned}$$
for $t \in [0,T^*)$, where $C_1 > 0$ is given by
\[
C_1 :=\frac{m(d_v(0) + \nu_1 d_\eta(0))}{\sqrt{1-4mK D_0}}.
\]
 \item [(ii)] If $1 \leq 4mk D_0$, then we have
\[
d_v(t) \leq (1 + 4Km)d_v(0) e^{-\frac tm} + \lt( 2KC_2 t - 4Kmd_v(0)\rt)e^{-\frac{t}{2m}},
\]
for $t \in [0,T^*)$, where $C_2 > 0$ is given by
\[
C_2 := \frac{d_\eta(0)}{2m} + d_v(0).
\]
 \end{itemize}
\end{proposition}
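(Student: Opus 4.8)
My plan is to avoid differentiating the velocity diameter directly and instead argue from the Duhamel representation of $v$, funneling everything into the decay of $d_\eta$ already established in Proposition \ref{prop_eta_decay}. Solving the velocity equation in \eqref{lag_iKu} (which is linear in $v$) gives, for every Lagrangian label $\theta$,
\[
v(\theta,t) = u_0(\theta)e^{-t/m} + \frac Km e^{-t/m}\int_0^t \Phi(\theta,s)e^{s/m}\,ds,
\]
where $\Phi(\theta,s) := \int_\T \sin(\eta(\theta_*,s)-\eta(\theta,s))\rho_0(\theta_*)\,d\theta_*$. Subtracting this identity for two labels $\theta_1,\theta_2 \in \overline\mms_t$, taking absolute values, and using $|u_0(\theta_1)-u_0(\theta_2)| \leq d_v(0)$ (since $v(\cdot,0)=u_0$), the goal of the first step is the scalar integral inequality
\[
d_v(t) \leq d_v(0)e^{-t/m} + \frac Km e^{-t/m}\int_0^t d_\eta(s)e^{s/m}\,ds, \qquad 0 \leq t < T^*.
\]

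The pointwise bound that makes this work is $|\Phi(\theta_1,s)-\Phi(\theta_2,s)| \leq d_\eta(s)$. I would obtain it from the identity $\sin A - \sin B = 2\cos\tfrac{A+B}{2}\sin\tfrac{A-B}{2}$ applied with $A = \eta(\theta_*,s)-\eta(\theta_1,s)$ and $B = \eta(\theta_*,s)-\eta(\theta_2,s)$, so that $\tfrac{A-B}{2} = \tfrac{\eta(\theta_2,s)-\eta(\theta_1,s)}{2}$; bounding $|\cos(\cdot)| \leq 1$ and $|\sin y| \leq |y|$ and integrating against $\rho_0$ (total mass one) yields
\[
|\Phi(\theta_1,s)-\Phi(\theta_2,s)| \leq |\eta(\theta_1,s)-\eta(\theta_2,s)| \leq d_\eta(s).
\]
This is the conceptual heart of the proof; everything afterward is a direct computation.

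It then remains to insert the bounds for $d_\eta$ from Proposition \ref{prop_eta_decay} and evaluate the convolution. In the regime $0 < 4mKD_0 < 1$, I would rewrite that bound as $d_\eta(s) \leq (d_\eta(0)-C_1)e^{-\nu_1 s} + C_1 e^{-\nu_2 s}$ and exploit the algebraic identities $\tfrac1m-\nu_1 = \nu_2$, $\tfrac1m-\nu_2 = \nu_1$, $1-m\nu_1 = m\nu_2$, and $1-m\nu_2 = m\nu_1$, all immediate from the definitions of $\nu_1,\nu_2$. These collapse $\int_0^t e^{(1/m-\nu_i)s}\,ds$ into clean exponentials, and regrouping the $e^{-t/m}$, $e^{-\nu_1 t}$, and $e^{-\nu_2 t}$ terms reproduces the formula in (i). In the regime $1 \leq 4mKD_0$, I would instead substitute $d_\eta(s) \leq e^{-s/(2m)}(d_\eta(0)+C_2 s)$, compute $\int_0^t (d_\eta(0)+C_2 s)e^{s/(2m)}\,ds$ with one integration by parts, and use $4KmC_2 = 2Kd_\eta(0) + 4Km\,d_v(0)$ to regroup the outcome into $(1+4Km)d_v(0)e^{-t/m} + (2KC_2 t - 4Km\,d_v(0))e^{-t/(2m)}$, giving (ii).

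The one place that needs care is the regrouping of the convolution integrals so that they match the stated closed forms. The exponent identities do the work: recognizing $1-m\nu_1 = m\nu_2$ is precisely what converts the $1/(m\nu_i)$ prefactors produced by the integration into the $1/(1-m\nu_j)$ prefactors appearing in the statement. No analytic difficulty beyond Proposition \ref{prop_eta_decay} enters, since Lemmas \ref{lem_diff} and \ref{lem_eta_bdd} already guarantee the validity of all the $d_\eta$ bounds on $[0,T^*)$.
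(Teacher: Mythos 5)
Your proposal is correct, and the second half (substituting the bounds of Proposition \ref{prop_eta_decay} into the convolution $\frac Km e^{-t/m}\int_0^t d_\eta(s)e^{s/m}\,ds$ and simplifying via $1-m\nu_1=m\nu_2$, $1-m\nu_2=m\nu_1$, and $4KmC_2=2Kd_\eta(0)+4Km\,d_v(0)$) coincides exactly with what the paper does; I verified that both regroupings reproduce the stated closed forms, with the $e^{-\nu t}$ in the statement being a typo for $e^{-\nu_2 t}$ in either derivation. Where you genuinely differ is the first step: the paper picks an extremal pair $\theta_{\widetilde M},\theta_{\widetilde m}$ realizing $d_v(t)$, writes the differential inequality $m\,d_v'(t)+d_v(t)\le K d_\eta(t)$, and applies Gr\"onwall to reach the integral inequality, whereas you derive the same integral inequality by subtracting the Duhamel representations of $v(\theta_1,\cdot)$ and $v(\theta_2,\cdot)$ and using $|\Phi(\theta_1,s)-\Phi(\theta_2,s)|\le|\eta(\theta_1,s)-\eta(\theta_2,s)|\le d_\eta(s)$ before maximizing over the pair. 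Your route is slightly more robust: Lemma \ref{lem_diff} only establishes differentiability of $d_\eta$ (via the constancy of the extremal phase indices), and the paper tacitly assumes the analogous property for $d_v$, whose maximizing pair can in principle switch; the integral representation sidesteps this entirely at no extra cost. Both approaches buy the same estimate, so the difference is one of rigor in a minor technical point rather than of substance.
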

\begin{proof}
In a similar fashion as before, we choose $\theta_{\widetilde M} \in \overline\mms_t$ and $\theta_{\widetilde m} \in \overline\mms_t$ such that $d_v(t) = v(\theta_{\widetilde M},t) - v(\theta_{\widetilde m}, t)$ on the time interval $[0,T^*)$. Then it follows from \eqref{lag_iKu} that $d_v$ satisfies 
$$\begin{aligned}
md_v'(t) + d_v(t) &\leq K \int_\T \lt(\sin(\eta(\theta_*,t)-\eta(\theta_{\widetilde M},t))- \sin(\eta(\theta_*,t)-\eta(\theta_{\widetilde m},t)) \rt)\rho_0(\theta_*)\,d\theta_*\\
&\leq Kd_\eta(t),  
\end{aligned}$$
for $t \in (0,T^*)$.
Applying Gr\"onwall's lemma to the inequality above gives
\[
d_v(t) \leq d_v(0) e^{-\frac tm} + \frac Km e^{-\frac tm} \int_0^t d_\eta(s) e^{\frac sm}\,ds.
\]
We then now use the upper bounds for $d_\eta$ obtained in Proposition \ref{prop_eta_decay} to have
$$\begin{aligned}
d_v(t) &\leq \lt(d_v(0) + \frac{K(C_1 - d_\eta(0))}{1-m \nu_1}  - \frac{KC_1}{1 - m \nu_2}\rt)e^{-\frac tm}  \cr
&\quad + \frac{KC_1}{1 - m\nu_2 } e^{-\nu t} - \frac{K(C_1 - d_\eta(0))}{1 - m\nu_1} e^{-\nu_1 t},
\end{aligned}$$
for $0 < 4mkD_0 < 1$, where 
\[
C_1 =\frac{m(d_v(0) + \nu_1 d_\eta(0))}{\sqrt{1-4mK D_0}}.
\]
For $1 \leq 4mkD_0$, we find
$$\begin{aligned}
d_v(t) &\leq \lt( d_v(0) + 4Km C_2 - 2Kd_\eta(0)\rt) e^{-\frac tm} \cr
&\quad + \lt( 2Kd_\eta(0) -4Km C_2+ 2KC_2 t\rt)e^{-\frac{t}{2m}},
\end{aligned}$$
where
\[
C_2 = \frac{d_\eta(0)}{2m} + d_v(0).
\]
\end{proof}
We are now in a position to state the exponential synchronization estimates for the system \eqref{lag_iKu} under an appropriate regularity assumptions on the solutions and smallness assumptions on the initial phase and velocity diameters. 
\begin{theorem} \label{thm_diam_decay}
Let $T>0$ and $(\eta,h,v)$ be a solution to the system \eqref{lag_iKu} on the interval $[0,T]$ with initial data $(h_0, v_0) \in H^2(\mms_0) \times H^3(\mms_0)$.  Suppose that  there exists $M > 0$ independent of $t$ such that 
\bq\label{ass_m}
\sup_{0 \leq t \leq T}\|v(\cdot,t)\|_{H^3} \leq M.
\eq
If the initial phase and velocity diameters $d_\eta(0)$, $d_v(0)$ are small enough, then there exist positive constants $c_1,c_2, \Lambda_1, \Lambda_2$, which are independent of $t$, such that
\[
d_\eta(t) \leq c_1 e^{-\Lambda_1 t}, \quad d_v(t) \leq c_2 e^{-\Lambda_2 t}, 
\]
for $t \in [0,T)$.
\end{theorem}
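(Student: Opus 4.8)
The plan is to read off the two exponential bounds almost directly from Propositions~\ref{prop_eta_decay} and~\ref{prop_v_decay}, whose right-hand sides are already finite sums of decaying exponentials (times, in the borderline regime, a linear factor in $t$) with constants depending only on $m$, $K$, $D_0$ and the initial diameters, hence independent of $t$. Two things then remain: (a) repackaging those sums into the single form $c_i e^{-\Lambda_i t}$, and (b) upgrading the interval from $[0,T^*)$, on which those propositions hold, to the full interval $[0,T)$ in the statement, that is, showing $T^*=T$, where $T^*$ is the flow time from Lemma~\ref{lem_diff}.

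For (a), since the initial diameters are small we have $0<C_0<\pi$, so Lemma~\ref{lem_eta_bdd} and Propositions~\ref{prop_eta_decay} and~\ref{prop_v_decay} apply on $[0,T^*)$. In the regime $0<4mKD_0<1$ the slowest rate appearing in either bound is $\nu_2=\tfrac{1-\sqrt{1-4mKD_0}}{2m}$, which satisfies $0<\nu_2<\nu_1<1/m$; dominating every exponential by $e^{-\nu_2 t}$ and collecting the sign-controlled coefficients yields $d_\eta(t)\le c_1 e^{-\nu_2 t}$ and $d_v(t)\le c_2 e^{-\nu_2 t}$, i.e. $\Lambda_1=\Lambda_2=\nu_2$. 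In the regime $4mKD_0\ge 1$ the bounds carry a prefactor $t\,e^{-t/(2m)}$, which I absorb via $t\,e^{-t/(2m)}\le \tfrac{4m}{e}\,e^{-t/(4m)}$, giving $\Lambda_1=\Lambda_2=1/(4m)$ with enlarged constants. In both cases $c_1,c_2,\Lambda_1,\Lambda_2$ are independent of $t$ and of $T^*$.

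Step (b) is the crux. Lemma~\ref{lem_diff} only guarantees $\pa_\theta\eta>0$ on $[0,T^*)$ with $T^*\le T$, so I must exclude $T^*<T$. Differentiating the closed equation~\eqref{eq_eta} (here $\Omega=0$) in $\theta$ shows that $p:=\pa_\theta\eta$ solves the linear second-order ODE
\[
m\,\pa_{tt}p+\pa_t p+K\,B(\theta,t)\,p=0,\qquad p(\theta,0)=1,\quad \pa_t p(\theta,0)=\pa_\theta u_0(\theta),
\]
with $B(\theta,t):=\int_\T \cos(\eta(\theta_*,t)-\eta(\theta,t))\rho_0(\theta_*)\,d\theta_*$ obeying $\cos d_\eta(t)\le B\le 1$. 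For small data $C_0<\pi/2$, so $B\ge\cos C_0>0$ on $[0,T^*)$ by Lemma~\ref{lem_eta_bdd}, and by step (a) the defect $1-B\le 1-\cos d_\eta(t)\le\tfrac12 d_\eta(t)^2$ is exponentially small and time-integrable; the a priori bound~\eqref{ass_m} controls $\pa_t p=\pa_\theta v$ through $\|\pa_\theta v(\cdot,t)\|_{L^\infty}\lesssim\|v(\cdot,t)\|_{H^3}\le M$. The conceptual reason $T^*=T$ is that a bona fide solution with bounded density cannot persist past a zero of $\pa_\theta\eta$: the Lagrangian relation $h\,\pa_\theta\eta=\rho_0$ forces $h=\rho_0/\pa_\theta\eta$ to blow up there, so the assumed existence of $(\eta,h,v)$ on $[0,T]$ with the uniform regularity~\eqref{ass_m} keeps $\pa_\theta\eta$ bounded away from zero on $[0,T)$. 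To make this quantitative I would establish a uniform lower bound $\inf_\theta\inf_{0\le t<T^*}\pa_\theta\eta>0$ from the ODE above (using the uniform control of $\pa_t p$ and the integrability of $1-B$), and then run a continuity argument: the set of $t$ such that $\pa_\theta\eta(\cdot,s)>0$ and $d_\eta(s)\le C_0$ for all $s\le t$ is nonempty, relatively closed by continuity, and relatively open by the uniform lower bound, hence equals $[0,T)$.

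I expect the genuine difficulty to lie entirely in this uniform lower bound on $\pa_\theta\eta$. The equation for $p$ is a damped oscillator with positive restoring coefficient $KB$, so without smallness $p$ may oscillate through zero in finite time (the finite-time density concentration toward a Dirac mass); it is precisely the smallness of $d_\eta(0),d_v(0)$ together with the regularity bound~\eqref{ass_m} that must be shown to preclude this before time $T$, consistently with the fact that the hypotheses already posit a solution on all of $[0,T]$. Once $T^*=T$ is secured, combining it with step (a) delivers the two stated exponential decay estimates on $[0,T)$ and completes the proof.
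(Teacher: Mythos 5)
Your step (a) --- bounding the sums in Propositions \ref{prop_eta_decay} and \ref{prop_v_decay} by single exponentials with the slowest rate, and absorbing the factor $t$ in the borderline regime --- is fine and is essentially what the paper leaves implicit. The gap is in step (b), and it is precisely the step the paper's proof is built around. The paper argues by contradiction: if $T^*<T$ then $\pa_\theta\eta\to0$ as $t\to T^*-$, and it rules this out by writing $\pa_\theta\eta(\theta,t)=1+\int_0^t\pa_\theta v(\theta,s)\,ds$ and showing that $\int_0^{T^*}\|\pa_\theta v(\cdot,s)\|_{L^\infty}\,ds$ is small. The key tool is not the crude bound $\|\pa_\theta v\|_{L^\infty}\lesssim\|v\|_{H^3}\le M$ that you invoke --- that only yields $\pa_\theta\eta\ge1-CMt$, which is useless beyond $t\sim 1/(CM)$ and so cannot cover an arbitrary $[0,T)$ --- but the Gagliardo--Nirenberg interpolation
\[
\|\pa_\theta v(\cdot,t)\|_{L^\infty}\le C M^{1/3}d_v(t)^{2/3}+CM^{3/5}d_v(t)^{2/5},
\]
obtained by interpolating between $\|v(\cdot,t)-v_c(t)\|_{L^\infty}\le d_v(t)$ and the $H^3$ bound \eqref{ass_m}. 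Combined with the exponential decay of $d_v$ from Proposition \ref{prop_v_decay}, whose prefactor shrinks with $d_\eta(0),d_v(0)$, this makes $\int_0^{T^*}\|\pa_\theta v\|_{L^\infty}\,ds\le C(d_\eta(0),d_v(0))$ with $C(d_\eta(0),d_v(0))\to 0$, uniformly in $T^*$, whence $\pa_\theta\eta\ge1-C(d_\eta(0),d_v(0))>0$. This interpolation is the idea missing from your proposal.

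Your proposed substitute --- the linear ODE $m\,p''+p'+KB\,p=0$ for $p=\pa_\theta\eta$ with $B$ close to $1$ --- cannot deliver the needed lower bound. The unperturbed equation $m\,p''+p'+Kp=0$ already fails to preserve positivity: in the underdamped regime $4mK>1$ its solutions are $e^{-t/(2m)}$ times an oscillation and cross zero in finite time for every choice of data $(p(0),p'(0))=(1,\pa_\theta u_0)$, and in every regime they tend to $0$. Hence no integrability of $1-B$ (a perturbation of the zeroth-order coefficient) can rescue a uniform positive lower bound; you acknowledge the oscillation danger but supply no mechanism by which the smallness of $d_\eta(0),d_v(0)$ defeats it, and indeed it cannot enter through that ODE alone. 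Likewise, the ``conceptual'' argument that a solution with finite $h$ cannot pass a zero of $\pa_\theta\eta$ begs the question: the hypothesis \eqref{ass_m} controls $v$, not $h$, and the positivity of $\pa_\theta\eta$ on $[0,T)$ is exactly what must be proved. To repair the proof, replace your bound on $\pa_t p$ by the interpolation inequality above; your closing continuity argument is then sound.
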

\begin{remark}Theorem \ref{thm_diam_decay} implies that as long as there exists a solution satisfying a certain regularity, the exponential decay of phase and velocity diameters can be obtained under some smallness assumptions on the initial phase and velocity diameters. It is worth noticing that we only require the smallness assumptions on the initial phase and velocity diameters, not initial data $(h_0, v_0) = (\rho_0, u_0)$.
\end{remark}
\begin{proof}[Proof of Theorem \ref{thm_diam_decay}]
It suffices to prove that $T^* = T$ in view of Lemma \ref{lem_diff}, Propositions \ref{prop_eta_decay} and \ref{prop_v_decay}.
 Assume to the contrary that $T^* < T$. Then, we have
\begin{equation} \label{D-2}
\lim_{t \to T^*-}\pa_\theta \eta(\theta,t) = 0.
\end{equation} 
Recall the notation $v_c(t) = \int_\T v(\theta,t)\rho_0(\theta)\,d\theta$, and note that
\begin{align}\label{D-2-1}
\begin{aligned}
|v(\theta,t)-v_c(t)| & = \left| \int_\T v(\theta,t)\rho_0(\theta_*)\,d\theta_* - \int_\T v(\theta_*,t)\rho_0(\theta_*)\,d\theta_* \right|   \leq d_v(t),
\end{aligned}
\end{align}
for $t \in [0,T^*)$.
Using the Sobolev embedding, we get 
\[
\| \pa_\theta v(\cdot,t) \|_{L^\infty} \leq C \|  \pa_\theta v(\cdot,t)\|_{H^1} = C \| \pa_\theta(v(\cdot,t)-v_c(t))\|_{H^1},
\]
and furthermore, by Gagliardo-Nirenberg interpolation inequality, we estimate
$$\begin{aligned}
\| \pa_\theta(v(\cdot,t)-v_c(t))\|_{L^2} &\leq C\|v(\cdot,t)-v_c(t)\|_{L^\infty}^{2/3}\|\pa_\theta^2 (v(\cdot,t)-v_c(t))\|_{L^2}^{1/3} \quad \mbox{and}\cr
\| \pa_\theta^2(v(\cdot,t)-v_c(t))\|_{L^2} &\leq C\|v(\cdot,t)-v_c(t)\|_{L^\infty}^{2/5}\|\pa_\theta^3 (v(\cdot,t)-v_c(t))\|_{L^2}^{3/5},
\end{aligned}$$
for $t \in [0,T^*)$. This together with \eqref{ass_m} and \eqref{D-2-1} asserts
\[
\| \pa_\theta v(\cdot,t) \|_{L^\infty} \leq C \|  \pa_\theta v(\cdot,t)\|_{H^1} \leq CM^{1/3}d_v(t)^{2/3} + CM^{3/5}d_v(t)^{2/5}.
\]
Note that it is clear from the smallness assumption on $d_\eta(0)$ and $d_v(0)$, and the estimates for $d_v(t)$ in Proposition \ref{prop_v_decay} that $d_v^{2/3}$ and $d_v^{2/5}$ are integrable in $(0,\infty)$. More precisely, we obtain
\[
\int_0^{T^*} (d_v(s)^{2/3} + d_v(s)^{2/5})\,ds \leq C(d_\eta(0), d_v(0)),
\]
where $C(d_\eta(0), d_v(0)) > 0$ is independent of $t$, and it satisfies
\[
\lim_{d_\eta(0), d_v(0) \to 0}C(d_\eta(0), d_v(0)) = 0.
\]
Thus for sufficiently small $d_\eta(0)$ and $d_v(0)$, we have
$$\begin{aligned}
\lim_{t \to T^*-}\pa_\theta \eta(\theta,t) &= 1 + \int_0^{T^*}\pa_\theta v(\theta,t)\,dt \cr
&\geq 1- \int_0^{T^*}\| \pa_\theta v(\cdot,t)\|_{L^\infty}\,dt \cr
&\geq 1 - C(d_\eta(0), d_v(0)).
\end{aligned}$$
This is a contradiction to \eqref{D-2} and completes the proof.
\end{proof}

\begin{remark}\label{rmk_dirac} Theorem \ref{thm_diam_decay} implies 
\[
d_{BL}(\rho(\theta,t)d\theta, \delta_{\eta_\infty}d\theta) \leq c_3 e^{-\Lambda_3 t}, \quad t \in [0,T),
\]
for some positive constants $c_3$ and $\Lambda_3$, where $d_{BL}$ denotes the bounded Lipschitz distance. Indeed, if we set 
\[
\mathbb D:= \lt\{\phi: \T \to \R~|~\|\phi\|_{\infty} \leq 1, ~ \mbox{Lip}(\phi):= \sup_{\theta \neq \theta_*} \frac{\phi(\theta) - \phi(\theta_*)}{| \theta - \theta_*|} \leq 1 \rt\}
\]
and $\eta_\infty:= \eta_c(0) + mv_c(0)$, then we have
$$\begin{aligned}
d_{BL}(\rho(\theta,t)d\theta, \delta_{\eta_\infty}d\theta) & = \sup_{\phi \in \mathbb D} \lt| \int_{\T}\phi(\theta)\rho(\theta,t)\,d\theta - \int_{\T}\phi(\theta)\delta_{\eta_{\infty}}\,d\theta \rt| \\
&= \sup_{\phi \in \mathbb D} \lt| \int_{\T} (\phi(\eta(\theta,t)) - \phi(\eta_{\infty}))\rho_0(\theta)\,d\theta \rt|\cr
&\leq \int_\T |\eta(\theta,t)-\eta_{\infty}| \rho_0(\theta)\,d\theta \\
&\leq \int_\T (|\eta(\theta,t)-\eta_c(t)| + |\eta_c(t)-\eta_\infty|) \rho_0(\theta)\,d\theta,
\end{aligned}$$
for $\phi \in \mathbb D$, where $\eta_c$ is the mean phase given by
\[
\eta_c(t) = \int_{\T}\eta(\theta,t)\rho_0(\theta)\,d\theta.
\]
The last integral can be estimated as follows. First, we get
\begin{align*}
|\eta(\theta,t)-\eta_c(t)| = \lt| \int_\T (\eta(\theta,t)-\eta(\theta_*,t))\rho_0(\theta_*)\,d\theta_* \rt| \leq d_\eta(t).
\end{align*}
For the second term, we use Lemma \ref{lem_e_id} (i) and the relation $\eta_c'(t) = v_c(t)$ to obtain
\[
\eta_c(t) = \eta_c(0) + \int_0^t v_c(0)e^{-\frac 1m s}\,ds = \eta_c(0) + mv_c(0)(1-e^{-\frac 1m t}) = \eta_\infty -mv_c(0)e^{-\frac 1m t}.
\]
Thus we have
\[
d_{BL}(\rho(\theta,t)d\theta, \delta_{\eta_\infty}d\theta) \leq d_\eta(t) + mv_c(0)e^{-\frac 1m t}, \quad t \in [0,T).
\]
Finally, we use Theorem \ref{thm_diam_decay} to conclude the desired result.
\end{remark}

%
%
%
%
\section{Critical thresholds phenomena}\label{sec_cri}
\setcounter{equation}{0}
In this section, we study critical thresholds phenomena in the system \eqref{hydro_Ku}. We first provide the local-in-time existence and uniqueness of solutions to the system \eqref{hydro_Ku}. 
\begin{theorem}\label{thm_local2} For any $0< N < M$, there is a positive $T_0 > 0$ such that if
\bq\label{asp_thm2}
\|\rho_0\|_{H^2_g} + \|\rho_0\|_{L^\infty} + \|u_0\|_{H^3_g} + \|\pa_\theta u_0\|_{L^\infty} < N \quad \mbox{and} \quad \rho > 0 \mbox{ in } \T \times \R,
\eq
then the Cauchy problem \eqref{hydro_Ku}-\eqref{ini_hydro_Ku} has a unique strong solution 
$$\begin{aligned}
\rho &\in \mc([0,T_0]; H^2_g(\T \times \R)) \cap L^\infty(\T \times \R \times (0,T_0)),\cr
u &\in  \mc([0,T_0]; H^3_g(\T \times \R)), \quad \mbox{and} \quad \pa_\theta u \in L^\infty(\T \times \R \times (0,T_0)) 
\end{aligned}$$
satisfying
\[
\sup_{0 \leq t \leq T_0} \lt(\|\rho(\cdot,\cdot,t)\|_{H^2_g} + \|\rho(\cdot,\cdot,t)\|_{L^\infty} + \|u(\cdot,\cdot,t)\|_{H^3_g} + \|\pa_\theta u(\cdot,\cdot,t)\|_{L^\infty} \rt) < M.
\]
\end{theorem}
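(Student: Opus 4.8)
The plan is to work directly with system \eqref{hydro_Ku} in its non-conservative (transport) form, which is available precisely because the no-vacuum condition $\rho_0 > 0$ in \eqref{asp_thm2} propagates. Since $\rho > 0$, the continuity equation lets me rewrite the momentum equation as $\rho(\pa_t u + u\pa_\theta u) = \tfrac{\rho}{m}(-u + \Omega + KF[\rho])$ and cancel the factor $\rho$, where $F[\rho](\theta,t) := \int_{\T\times\R}\sin(\theta_*-\theta)\rho(\theta_*,\Omega_*,t)g(\Omega_*)\,d\theta_* d\Omega_*$. This turns \eqref{hydro_Ku} into the transport system
\begin{align*}
&\pa_t\rho + u\pa_\theta\rho = -\rho\,\pa_\theta u,\\
&\pa_t u + u\pa_\theta u = \tfrac1m\bigl(-u + \Omega + KF[\rho]\bigr),
\end{align*}
to which I attach the same kind of linear iteration used for Theorem \ref{thm_local}: given $(\rho^n,u^n)$, freeze the transport velocity $u^n$ in the convective derivatives and evaluate the nonlocal force at level $n$, so that $\rho^{n+1}$ and $u^{n+1}$ solve linear transport/damped equations along the characteristics of $u^n$ and are globally defined at each step.

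The heart of the argument is a uniform-in-$n$ a priori bound in the norm appearing in the theorem, $Y(t):=\|\rho\|_{H^2_g}+\|\rho\|_{L^\infty}+\|u\|_{H^3_g}+\|\pa_\theta u\|_{L^\infty}$. For the weighted Sobolev part I apply $\pa_\theta^k$ (with $k\le 2$ for $\rho$ and $k\le 3$ for $u$), pair against $\pa_\theta^k\rho\,g$ and $\pa_\theta^k u\,g$, and integrate over $\T\times\R$; since $g=g(\Omega)$ is independent of $\theta$, the derivative $\pa_\theta$ commutes with the weight. The top-order convective terms are handled by integration by parts on $\T$, producing the favorable factors $\int\pa_\theta u^n(\pa_\theta^2\rho^{n+1})^2 g$ and $\int\pa_\theta u(\pa_\theta^3 u)^2 g$, which is exactly why $\|\pa_\theta u\|_{L^\infty}$ must enter the functional. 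The coupling term $\rho\,\pa_\theta u$ loses no derivative because $u$ is one order more regular than $\rho$ (so $\pa_\theta^3 u\in L^2_g$ is paired against $\pa_\theta^2\rho$), the commutators $[\pa_\theta^k,u^n\pa_\theta]$ close by standard Moser-type product and commutator estimates (in the spirit of Lemma \ref{lem_kawa}), the forcing $\Omega$ lies in $L^2_g$ by the second-moment bound in \eqref{condi_g}, and the nonlocal term is essentially free: differentiating $F$ only cycles $\pm\sin,\pm\cos$, so $\|\pa_\theta^k F[\rho^n]\|_{L^\infty(\T)}\le \int_{\T\times\R}\rho^n g = 1$ by Remark \ref{rmk_21}, giving a uniform $H^3_g$ bound on $F$. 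The two unweighted $L^\infty$ quantities are propagated not by Sobolev embedding (which fails in the unbounded $\Omega$-direction) but along characteristics: $\rho$ obeys $\rho(\eta,\Omega,t)=\rho_0\exp(-\int_0^t\pa_\theta u\,ds)$, while $\pa_\theta u$ satisfies the Riccati-type ODE $\tfrac{d}{dt}\pa_\theta u = -(\pa_\theta u)^2 - \tfrac1m\pa_\theta u + \tfrac{K}{m}\pa_\theta F[\rho]$ with $|\pa_\theta F[\rho]|\le 1$. Collecting these into a single differential inequality $Y'\ls P(Y)$ with $P$ polynomial, a continuity argument yields $T_0>0$ depending only on $N,M$ and the bound $\sup_{[0,T_0]}Y<M$; positivity of $\rho$ on $[0,T_0]$ is simultaneous, since the exponential formula above stays bounded once $\int_0^t\|\pa_\theta u\|_{L^\infty}\,ds$ is finite.

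With uniform bounds in hand I follow the scheme of Theorem \ref{thm_local}: the differences $(\rho^{n+1}-\rho^n,u^{n+1}-u^n)$ form a Cauchy sequence in the low-order space $\mc([0,T_0];L^2_g)$, their difference equations being linear transport equations whose sources are controlled by lower-order differences together with the Lipschitz bound $\|F[\rho^{n+1}]-F[\rho^n]\|_{L^2}\ls\|\rho^{n+1}-\rho^n\|_{L^2_g}$; Gr\"onwall then gives a bound $\ls T_0^{\,n}/n!$ and hence convergence. Interpolating the $L^2_g$-convergence against the uniform high-order bounds, together with weak lower semicontinuity of the $H^2_g$- and $H^3_g$-norms, recovers $\rho\in\mc([0,T_0];H^2_g)$ and $u\in\mc([0,T_0];H^3_g)$ with the stated $L^\infty$ regularity, and passing to the limit identifies $(\rho,u)$ as a strong solution of \eqref{hydro_Ku}-\eqref{ini_hydro_Ku}. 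Uniqueness follows by the same energy estimate applied to the difference of two solutions in $L^2_g$.

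I expect the main obstacle to be closing the top-order weighted energy estimate for the quasilinear transport system while simultaneously propagating the two unweighted $L^\infty$ norms over the unbounded frequency domain $\Omega\in\R$: the $H^s_g$ bounds come from weighted energy identities whereas $\|\rho\|_{L^\infty}$ and $\|\pa_\theta u\|_{L^\infty}$ must be extracted from the characteristic ODEs, and the two mechanisms are coupled, since the energy estimates need the $L^\infty$ bounds to absorb the top-order convective terms while the $L^\infty$ bounds feed on $\pa_\theta F$ and $\pa_\theta u$. Assembling these into one closed differential inequality for $Y(t)$, and verifying that the resulting existence time $T_0$ depends only on $N$ and $M$, is where the real care is required.
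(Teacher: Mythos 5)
Your proposal follows essentially the same route as the paper's Appendix~\ref{app_local}: a linear iteration with the transport velocity frozen at the previous level, weighted $H^2_g\times H^3_g$ energy estimates closed with the help of $\|\rho\|_{L^\infty}$ and $\|\pa_\theta u\|_{L^\infty}$ propagated along characteristics (using $|\pa_\theta^k F[\rho]|\le 1$ and the second moment of $g$), followed by a Cauchy/interpolation argument as in Theorem~\ref{thm_local}. The only technical point to watch is that the contraction estimate for the differences cannot be closed in $L^2_g$ alone: the source of the density-difference equation contains $\pa_\theta(\rho^n u^{n,n-1})$, so the paper works in $\mc([0,T_0];L^2_g)\times\mc([0,T_0];H^1_g)$ (controlling $u^{n+1,n}$ in $L^4_g$ and $\pa_\theta u^{n+1,n}$ in $L^2_g$), which your framework accommodates with no change in substance.
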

\begin{proof} We notice that the local-in-time existence theory is well developed by now, however our solution space is weighted by the distribution function $g$ for natural frequencies. Compared to Theorem \ref{thm_local}, we need to be more careful because of the convection term in \eqref{hydro_Ku} which is nonlinear, and it does not appear in the Lagrangian system \eqref{lag_Ku}. For these reasons, we provide some details of the proof in Appendix \ref{app_local}. 
\end{proof}

Differentiating the momentum equation in \eqref{hydro_Ku} with respect to $\theta$ and letting $d := \pa_\theta u$, we rewrite the equation \eqref{hydro_Ku} as follows:
\begin{align}\label{cri_Ku}
\begin{aligned}
&D_t \rho + \rho d = 0,\quad (\theta,\Omega) \in \T \times \R, \quad t > 0,\cr
&D_t d + d^2 = -\frac dm - \frac Km \int_{\T \times \R} \cos(\theta_* - \theta)\rho(\theta_*,\Omega_*,t)g(\Omega_*)\,d\theta_* d\Omega_*,
\end{aligned}
\end{align}
where $D_t$ denotes the time derivative along the characteristic flow $\eta(\theta,\Omega,t)$, i.e., $D_t = \pa_t + u \pa_\theta $. 

\begin{proposition}\label{prop_cri} Consider the system \eqref{cri_Ku}. Then the following assertions hold.
\begin{itemize} 
\item[(i)] {\bf (Subcritical region)} If $1 \geq 4Km$ and 
\[
d_0(\theta,\Omega) \geq \frac{-1 - \sqrt{1 - 4Km}}{2m},
\]
then $d(\eta(\tot),\Omega,t)$ remains bounded from below for $(\theta,\Omega) \in \T \times \R$ and $t \geq 0$.
\item[(ii)] {\bf (Supercritical region)} If 
\[
d_0(\theta,\Omega) < \frac{-1 - \sqrt{1 + 4Km}}{2m},
\] 
then $d(\eta(\tot),\Omega,t) \to - \infty$ in finite time.
\end{itemize}
\end{proposition}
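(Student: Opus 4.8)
The plan is to reduce the nonlocal system \eqref{cri_Ku} to a scalar Riccati-type differential inequality along the characteristic flow and then run an ODE comparison argument. Fix a Lagrangian label $(\theta,\Omega)$, set $\delta(t):=d(\eta(\tot),\Omega,t)$, and write
\[
\mathcal I(\theta,\Omega,t):=\int_{\T\times\R}\cos(\theta_*-\theta)\rho(\theta_*,\Omega_*,t)g(\Omega_*)\,d\theta_*d\Omega_*.
\]
Since $D_t=\pa_t+u\pa_\theta$ is the derivative along $\eta$, the second equation of \eqref{cri_Ku} becomes the scalar equation
\[
\delta'(t)=-\delta(t)^2-\frac1m\delta(t)-\frac Km\,\mathcal I(\eta(\tot),\Omega,t).
\]
The crucial point is that the nonlocal forcing is \emph{uniformly bounded}: because $\int_\T\rho(\tot)\,d\theta=1$ for each $\Omega$ (Remark \ref{rmk_21}) and $\int_\R g=1$ by \eqref{condi_g}, while $|\cos|\le1$, we obtain $-1\le\mathcal I\le1$. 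This neutralizes the nonlocal coupling into a bounded scalar forcing, so $\delta$ obeys the two-sided autonomous differential inequality
\[
-\delta^2-\frac\delta m-\frac Km\;\le\;\delta'(t)\;\le\;-\delta^2-\frac\delta m+\frac Km .
\]

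For the subcritical claim (i), I would compare $\delta$ from below with the solution $\underline\delta$ of the autonomous Riccati equation $\underline\delta'=-\underline\delta^2-\underline\delta/m-K/m=-(\underline\delta-r_-)(\underline\delta-r_+)$, whose roots $r_\pm=\frac{-1\pm\sqrt{1-4Km}}{2m}$ are real precisely when $1\ge4Km$. Here $r_-=\frac{-1-\sqrt{1-4Km}}{2m}$ is an equilibrium, and since the hypothesis gives $d_0\ge r_-$, the solution $\underline\delta$ with $\underline\delta(0)=d_0$ cannot cross this equilibrium and hence stays $\ge r_-$ for all time. The ODE comparison principle (the vector field is locally Lipschitz) then yields $\delta(t)\ge\underline\delta(t)\ge r_-$ uniformly in $(\theta,\Omega)$, which is exactly the stated lower bound.

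For the supercritical claim (ii), I would compare $\delta$ from above with the solution $\overline\delta$ of $\overline\delta'=-\overline\delta^2-\overline\delta/m+K/m=-(\overline\delta-s_-)(\overline\delta-s_+)$, with roots $s_\pm=\frac{-1\pm\sqrt{1+4Km}}{2m}$ (always real). When $d_0<s_-=\frac{-1-\sqrt{1+4Km}}{2m}$, both factors are negative, so $\overline\delta$ is strictly decreasing; moreover $s_-<-1/(2m)$ lies left of the vertex of the parabola, so the descent only accelerates and $\overline\delta$ drops below $-2/m$ in finite time. For $\overline\delta\le-2/m$ one has $-\overline\delta/m\le\tfrac12\overline\delta^2$, whence $\overline\delta'\le-\tfrac12\overline\delta^2+K/m\le-\tfrac14\overline\delta^2$ once $\overline\delta$ is sufficiently negative; integrating this Riccati inequality forces $\overline\delta\to-\infty$ in finite time. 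By comparison $\delta(t)\le\overline\delta(t)$, so $d(\eta(\tot),\Omega,t)\to-\infty$ in finite time.

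The genuine obstacle is not the ODE analysis but justifying the reduction: a priori the coupling in \eqref{cri_Ku} is nonlocal, so $\delta$ at a single label is tied to the entire density, and one must guarantee that the two comparison ODEs are \emph{uniform} across all labels $(\theta,\Omega)$. This is precisely what the bound $|\mathcal I|\le1$ secures. I would also invoke the local well-posedness of Theorem \ref{thm_local2} to ensure the characteristic-wise derivatives are meaningful and the solution stays classical up to the breakdown time, so that the finite-time divergence of $d=\pa_\theta u$ is correctly read as the loss of regularity. A minor care point in (i) is the degenerate barrier at the equilibrium $r_-$, which is handled cleanly by the comparison theorem rather than by a direct sign-of-derivative argument at the contact point.
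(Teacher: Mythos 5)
Your proposal is correct and follows essentially the same route as the paper: bound the nonlocal term by $1$ using the conservation of mass and the normalization of $g$, reduce to the two-sided Riccati differential inequality $-K/m \le D_t d + d^2 + d/m \le K/m$, and conclude by ODE comparison. The only differences are cosmetic — in (i) the paper writes the explicit Riccati solution and supplements it with a continuity argument on $[d_-,d_+)$ where you use the phase-line/equilibrium barrier at $r_-$, and in (ii) the paper integrates $D_t(d-d_-^*)\le -(d-d_-^*)^2$ directly rather than passing through the intermediate bound $\overline\delta'\le -\tfrac14\overline\delta^2$ — both of which are sound.
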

\begin{proof}
(i)~ Note that the interaction term in \eqref{cri_Ku} is easily bounded by
\[
\lt|\int_{\T \times \R}\cos(\theta_* - \theta)\rho(\theta_*,\Omega_*,t)g(\Omega_*)\,d\theta_* d\Omega_*\rt| \leq \int_{\T \times \R}\rho(\theta_*,\Omega_*,t)g(\Omega_*)\,d\theta_* d\Omega_* = 1,
\]
due to Lemma \ref{lem_energy} (i), \eqref{condi_g}, and \eqref{condi_ir}. This yields
\bq\label{est_cri}
-\frac Km \leq D_t d + d^2 + \frac dm \leq \frac Km.
\eq
Let us consider the first inequality in \eqref{est_cri}. It can be rewritten as
\[
D_t d \geq -\lt(d^2 + \frac dm + \frac Km\rt) = -(d - d_-)(d-d_+),
\]
where $d_\pm$ is given by
\[
d_\pm := \frac{-1 \pm \sqrt{1 - 4Km}}{2m} \quad \mbox{for} \quad 1 \geq 4Km.
\]
For $d_- \leq d_0 <d_+$, $d(t) \geq d_0$ by continuity argument. We now let $q$ solve the following Riccati's equation:
\[
D_tq = -(q-d_-)(q-d_+), \quad q(0)=d_0.
\]
The solution of this equation is explicitly given as follows.
\[
q(t) = \frac{d_+(d_0-d_-)e^{(d_+ - d_-)t}+d_-(d_+-d_0)}{(d_0-d_-)e^{(d_+ - d_-)t}+d_+-d_0},
\]
and it is easy to see that $q(t) \geq d_+$ if $d_0 \geq d_+.$ The comparison principle yields  $d(t) \geq q(t) \geq d_+$ for $d_0 \geq d_+$. Thus, we have
\[
\left\{ \begin{array}{ll}
d(t) \geq d_0 & \textrm{if $d_0 \in [d_-, d_+)$}, \\[2mm]
 d(t) \geq d_+ & \textrm{if $d_0 \in [d_+,\infty)$},
  \end{array} \right.
\]
for $t \geq 0$. \newline

(ii)~In a similar fashion as above, for the second inequality in \eqref{est_cri}, we get
\[
D_t d \leq -\lt(d^2 + \frac dm - \frac Km\rt) \leq -(d - d_-^*)(d-d_+^*) \quad \mbox{with} \quad d_\pm^* := \frac{-1 \pm \sqrt{1 + 4Km}}{2m}.
\]
This together with the continuity argument implies that if $d_0 < d_-^*$, then $d(t) \leq d_0 < d_{-}^*$. This readily gives
\[
D_t d \leq -(d - d_-^*)^2, \quad \mbox{i.e.,} \quad D_t (d - d_-^*) \leq -(d - d_-^*)^2,
\]
and, subsequently, solving the above differential inequality yields
\[
d(t) \leq \frac{1}{(d_0 - d_-^*)^{-1} + t} + d_-^*.
\]
Therefore, $d(t)$ will diverge to $-\infty$ until the time $T < (d_-^* - d_0)^{-1}$. This completes the proof.
\end{proof}

\begin{remark} \label{rem_density}
In the subcritical case, it follows from the continuity equation in $\eqref{cri_Ku}$ that
\[
\rho(\eta(\theta,\Omega,t),\Omega, t) = \rho_0(\theta,\Omega)e^{-\int_0^t d(\eta(\theta,\Omega,s),\Omega, s)ds} \leq \rho_0(\theta,\Omega)e^{-t d_-}.
\]
Thus, $\rho$ cannot attain $+\infty$ in a finite time. On the other hand, for the supercritical case, we see
\begin{align*}
\int_0^t d(s)\, ds \leq \int_0^t \left(\frac{1}{(d_0-d_-^*)^{-1}+s} + d_-^*\right) ds = \ln | 1- (d_-^*-d_0)t| + d_-^*,
\end{align*}
and thus $\rho$ can be estimated as
\[
\rho(\eta(\theta,\Omega,t),\Omega, t) = \rho_0(\theta,\Omega)e^{-\int_0^t d(\eta(\theta,\Omega,s),\Omega, s)\,ds} \geq \frac{\rho_0(\theta,\Omega)e^{-d_-^* t}}{|1-(d_-^*-d_0)t|}.
\]
This implies $\rho$ diverges to $+\infty$ until the time $T < (d_-^* - d_0)^{-1}$.
\end{remark}

\begin{remark} In the case of no interactions between oscillators, i.e., $K = 0$, the momentum equation in \eqref{cri_Ku} reduces to the damped pressureless Euler system:
\[
D_t d = -d^2 - \frac dm = -d\lt(d + \frac1m\rt).
\]
Thus we obtain a sharp critical thresholds:
\begin{itemize}
\item If $d_0(\theta,\Omega) < -1/m$, then $d(\eta(\tot),\Omega,t) \to -\infty$ in finite time.
\item If $d_0(\theta,\Omega) \geq -1/m$, then $d(\eta(\tot),\Omega,t)$ remains bounded for $(\theta,\Omega) \in \T \times \R$ and $t \geq 0$.
\end{itemize}
\end{remark}

We next provide {\it a priori} estimates of solutions $(\rho,u)$ to the system \eqref{hydro_Ku}. In the proposition below, we show that $\|\rho\|_{H^2_g}$ and $\|u \|_{H^3_g}$ can be controlled by $\|\rho\|_{L^\infty}$ and $\|\pa_\theta u\|_{L^\infty}$. \begin{proposition}\label{prop_ap}Let $s \geq 0$ be an integer and consider the system \eqref{hydro_Ku}. Then, for any $T > 0$, we have
$$\begin{aligned}
&\sup_{0 \leq t \leq T}\lt(\|\rho(\cdot,\cdot,t)\|_{H^2_g} + \|u(\cdot,\cdot,t)\|_{H^3_g} \rt) \cr
&\qquad \leq \lt(\|\rho_0\|_{H^2_g} + \|u_0\|_{H^3_g}\rt) \exp\lt(C\int_0^T \lt( 1 + \|\rho(\cdot,\cdot,s)\|_{L^\infty} + \|\pa_\theta u(\cdot,\cdot,s)\|_{L^\infty}\rt)dt \rt).
\end{aligned}$$
\end{proposition}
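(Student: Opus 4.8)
The plan is to run a weighted energy estimate on an equivalent nonconservative formulation, exploiting two structural facts: that only the $L^\infty$ norms of $\rho$ and of $\pa_\theta u$ are permitted to appear (these being exactly the quantities propagated by the solution class of Theorem \ref{thm_local2}), and that the weight $g$ enters solely through the $\Omega$-integration, so every $L^\infty(\T\times\R)$ factor can be pulled out of it. First I would keep the continuity equation in conservation form, $\pa_t\rho+\pa_\theta(\rho u)=0$, and use it to reduce the momentum equation (dividing by $\rho$, which is licit since $\rho>0$) to the Burgers-type equation
$$\pa_t u + u\pa_\theta u = \frac1m\big(-u+\Omega+KF\big),\quad F(\theta,t):=\int_{\T\times\R}\sin(\theta_*-\theta)\rho\,g\,d\theta_* d\Omega_*.$$
The virtue of this split is that the $u$-equation is closed and never sees $\pa_\theta\rho$, so I may run a single combined estimate on $E(t):=\|\rho\|_{H^2_g}^2+\|u\|_{H^3_g}^2$.

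For $k=0,1,2$ I apply $\pa_\theta^k$ to the continuity equation, multiply by $g\,\pa_\theta^k\rho$ and integrate over $\T\times\R$; for $k=0,\dots,3$ I do the same for the $u$-equation against $g\,\pa_\theta^k u$. The top-order transport contributions $\int g\,\pa_\theta^k\rho\,u\,\pa_\theta^{k+1}\rho$ and $\int g\,\pa_\theta^k u\,u\,\pa_\theta^{k+1}u$ are integrated by parts in $\theta$, each producing a harmless $\tfrac12\int g\,\pa_\theta u\,(\pa_\theta^k\rho)^2$ (resp. with $\pa_\theta^k u$), bounded by $\|\pa_\theta u\|_{L^\infty}E$. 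The genuinely coupling term is $\rho\,\pa_\theta^3 u$ in the top $\rho$-estimate, but it is benign: Cauchy--Schwarz gives $\le\|\rho\|_{L^\infty}\|\pa_\theta^2\rho\|_{L^2_g}\|\pa_\theta^3 u\|_{L^2_g}\le\|\rho\|_{L^\infty}E$. The nonlocal forcing is controlled because each $\pa_\theta^k$ falls only on the smooth kernel, leaving $\int\rho g=1$, whence $\|\pa_\theta^k F\|_{L^2_g}\le C$; the $\Omega$-source uses $\int\Omega^2 g<\infty$ from \eqref{condi_g}, and the damping $-u/m$ contributes a favorable sign.

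The main obstacle lies in the intermediate-derivative products, typified by $\int g\,\pa_\theta^2\rho\,\pa_\theta\rho\,\pa_\theta^2 u$ (from the Leibniz expansion of $\pa_\theta^3(\rho u)$) and $\int g\,\pa_\theta^3 u\,(\pa_\theta^2 u)^2$ (from $\pa_\theta^3(u\pa_\theta u)$). A naive Hölder split produces $\|\pa_\theta\rho\|_{L^\infty}$ or $\|\pa_\theta^2 u\|_{L^\infty}$, which are \emph{not} controlled by the admissible norms. The resolution is to first integrate by parts so the lost derivative lands on the top-order factor --- for instance $\int g\,\pa_\theta^2\rho\,\pa_\theta\rho\,\pa_\theta^2 u=\tfrac12\int g\,\pa_\theta\!\big((\pa_\theta\rho)^2\big)\pa_\theta^2 u=-\tfrac12\int g\,(\pa_\theta\rho)^2\pa_\theta^3 u$ --- and then apply a slicewise (fixed-$\Omega$) Gagliardo--Nirenberg inequality $\|\pa_\theta\rho(\cdot,\Omega)\|_{L^4_\theta}^2\lesssim\|\rho(\cdot,\Omega)\|_{L^\infty_\theta}\|\pa_\theta^2\rho(\cdot,\Omega)\|_{L^2_\theta}$, together with its analogue $\|\pa_\theta^2 u\|_{L^4_\theta}^2\lesssim\|\pa_\theta u\|_{L^\infty_\theta}\|\pa_\theta^3 u\|_{L^2_\theta}$. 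Because $\|\rho\|_{L^\infty}$ and $\|\pa_\theta u\|_{L^\infty}$ are suprema over all of $\T\times\R$, they factor out of the $\Omega$-integral, converting these terms into $\|\rho\|_{L^\infty}\|\pa_\theta^2\rho\|_{L^2_g}\|\pa_\theta^3 u\|_{L^2_g}$ and $\|\pa_\theta u\|_{L^\infty}\|\pa_\theta^3 u\|_{L^2_g}^2$, each majorized by the admissible multiplier times $E$.

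Collecting all contributions yields $\tfrac{d}{dt}E\le C(1+\|\rho\|_{L^\infty}+\|\pa_\theta u\|_{L^\infty})E$, with the bounded forcing absorbed through the constant term and $\sqrt E\le 1+E$. Setting $Y:=\|\rho\|_{H^2_g}+\|u\|_{H^3_g}\sim\sqrt E$ and applying Gr\"onwall's inequality gives the claimed bound. I expect the delicate part to be precisely the verification of the weighted Gagliardo--Nirenberg and Moser-type product estimates --- carried out slicewise in $\theta$ and then integrated against $g\,d\Omega$ --- and the bookkeeping of which factor carries the $L^\infty$ norm so that no uncontrolled $\|\pa_\theta\rho\|_{L^\infty}$ or $\|\pa_\theta^2 u\|_{L^\infty}$ survives; the remaining manipulations are routine.
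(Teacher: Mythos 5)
Your proposal is correct and follows essentially the same route as the paper: keep the continuity equation in conservation form for the $H^2_g$ estimate of $\rho$, divide the momentum equation by $\rho$ to get a closed Burgers-type equation for $u$ with the nonlocal force bounded via $\int\rho g=1$, and resolve the intermediate-derivative terms through the weighted interpolation bounds $\|\pa_\theta\rho\|_{L^4_g}^2\lesssim\|\rho\|_{L^\infty}\|\pa_\theta^2\rho\|_{L^2_g}$ and $\|\pa_\theta^2 u\|_{L^4_g}^2\lesssim\|\pa_\theta u\|_{L^\infty}\|\pa_\theta^3 u\|_{L^2_g}$, exactly as in the paper's Appendix A. The only cosmetic difference is in absorbing the constant forcing: the paper uses mass conservation to get $1\lesssim\|\rho\|_{H^2_g}$, which yields the stated purely multiplicative bound, whereas your $\sqrt{E}\le 1+E$ would leave an additive constant inside the Gr\"onwall estimate.
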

\begin{proof} Replacing $\bar u$ in the proof of Theorem \ref{thm_local2} by $u$, we obtain from \eqref{est_rho3} and \eqref{est_u3} that
\bq\label{est_rho3}
\frac{d}{dt} \|\rho\|_{H^2_g} \leq C\|\pa_\theta u\|_{L^\infty}\|\rho\|_{H^2_g} + C\|\rho\|_{L^\infty}\|\pa_\theta u\|_{H^2_g}
\eq
and
\bq\label{est_u3}
\frac{d}{dt}\|u\|_{H^3_g} \leq C\|\pa_\theta u \|_{L^\infty}\|u\|_{H^3_g} + C.
\eq
On the other hand, we find
\[
1 = \int_{\T \times \R} \rho(\theta_*,\Omega_*,t)g(\Omega_*)\,d\theta_* d\Omega_* \lesssim \|\rho\|_{L^2_g} \leq \|\rho\|_{H^2_g},
\]
due to \eqref{condi_g}. This together with combining \eqref{est_rho3} and \eqref{est_u3} gives
\[
\frac{d}{dt}\lt(\|\rho\|_{H^2_g} + \|u\|_{H^3_g} \rt) \leq C\lt(1 + \|\rho\|_{L^\infty} + \|\pa_\theta u\|_{L^\infty} \rt)\lt(\|\rho\|_{H^2_g} + \|u\|_{H^3_g} \rt).
\]
Applying Gr\"onwall's lemma to the inequality above, we conclude the desired result.
\end{proof}
As a direct consequence of Theorem \ref{thm_local2}, Propositions \ref{prop_cri} and \ref{prop_ap}, we have the following results for the critical thresholds phenomena in \eqref{hydro_Ku}. 
\begin{theorem}\label{thm_cri} Let $T>0$ and consider the system \eqref{hydro_Ku}. Under the assumptions of Theorem \ref{thm_local2}, we have the following assertions.
\begin{itemize} 
\item[(i)] {\bf (Subcritical region)} If $1 \geq 4Km$ and 
\[
\pa_\theta u_0(\theta,\Omega) \geq \frac{-1 - \sqrt{1 - 4Km}}{2m} \quad \mbox{for all} \quad (\theta, \Omega) \in \T \times \R,
\] 
then the system has a global solution, $(\rho,u) \in \mc([0,T); H^2_g(\T \times \R)) \times \mc([0,T); H^3_g(\T \times \R))$.
\item[(ii)]
{\bf (Supercritical region)} If there exists $(\theta_*,\Omega_*)$ such that 
\[
\pa_\theta u_0(\theta_*,\Omega_*) < \frac{-1 - \sqrt{1 + 4Km}}{2m},
\]
then the solution blows up in finite time.
\end{itemize}
\end{theorem}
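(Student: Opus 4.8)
The plan is to treat Theorem \ref{thm_cri} as a continuation (bootstrap) argument that glues together the local well-posedness of Theorem \ref{thm_local2}, the pointwise control of $d=\pa_\theta u$ along characteristics from Proposition \ref{prop_cri}, and the a priori Sobolev estimate of Proposition \ref{prop_ap}. The guiding principle, made precise by Proposition \ref{prop_ap}, is that the high-order norms $\|\rho\|_{H^2_g}+\|u\|_{H^3_g}$ cannot blow up as long as the two scalar quantities $\|\rho\|_{L^\infty}$ and $\|\pa_\theta u\|_{L^\infty}$ stay finite. Hence the whole proof reduces to controlling these two $L^\infty$ norms on each finite time interval in the subcritical case, and to exhibiting their divergence in the supercritical case.

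For part (i) I first invoke Theorem \ref{thm_local2} to produce a unique strong solution on a short interval, with the no-vacuum condition $\rho>0$ propagated. Proposition \ref{prop_cri}(i) furnishes, under $1\geq 4Km$ and the stated threshold, the uniform lower bound $d(\eta(\tot),\Omega,t)\geq d_-=\tfrac{-1-\sqrt{1-4Km}}{2m}$. For the matching upper bound I use the upper half of the differential inequality coming from \eqref{cri_Ku} together with the interaction bound $|\int_{\T\times\R}\cos(\theta_*-\theta)\rho(\theta_*,\Omega_*,t)g(\Omega_*)\,d\theta_* d\Omega_*|\leq 1$, namely $D_t d\leq -(d-d_-^*)(d-d_+^*)$; comparison with the corresponding Riccati solution shows $d$ can never exceed $\max\{\|\pa_\theta u_0\|_{L^\infty},d_+^*\}$, so $d$ is bounded above and below along every characteristic. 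Integrating the continuity equation exactly as in Remark \ref{rem_density} gives $\rho(\eta(\tot),\Omega,t)\leq \rho_0(\theta,\Omega)e^{-t\,d_-}$, which is finite on every $[0,T)$ and hence controls $\|\rho\|_{L^\infty}$ there. Because $\rho_0>0$ and $\rho$ is now bounded, the identity $h\,\pa_\theta\eta=\rho_0$ keeps $\pa_\theta\eta$ bounded away from zero, so $\theta\mapsto\eta(\theta,\Omega,t)$ remains a diffeomorphism of $\T$ and the pointwise bounds on $d$ upgrade to a bound on $\|\pa_\theta u(\cdot,\cdot,t)\|_{L^\infty}$. Feeding these two $L^\infty$ bounds into Proposition \ref{prop_ap} keeps $\|\rho\|_{H^2_g}+\|u\|_{H^3_g}$ finite on $[0,T]$, and the standard continuation criterion then extends the local solution to all of $[0,T)$; since $T>0$ is arbitrary, the solution is global.

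For part (ii) the argument is shorter. If $\pa_\theta u_0(\theta_*,\Omega_*)<d_-^*=\tfrac{-1-\sqrt{1+4Km}}{2m}$ at some point, Proposition \ref{prop_cri}(ii) shows that $d(\eta(\theta_*,\Omega_*,t),\Omega_*,t)\to-\infty$ at a finite time $T_b<(d_-^*-\pa_\theta u_0(\theta_*,\Omega_*))^{-1}$. In particular $\|\pa_\theta u(\cdot,\cdot,t)\|_{L^\infty}$ diverges as $t\uparrow T_b$. Since $u\in\mc([0,T_0];H^3_g)$ together with the Sobolev embedding $H^3(\T)\hookrightarrow\mc^1(\T)$ would force $\pa_\theta u$ to remain bounded, the solution cannot persist as a strong solution past $T_b$; equivalently $\|u(\cdot,\cdot,t)\|_{H^3_g}$ must blow up, which is precisely the claimed finite-time breakdown.

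The step I expect to be the main obstacle is the conversion, in part (i), of the Lagrangian (along-characteristic) bounds on $d$ provided by Proposition \ref{prop_cri} into genuine $L^\infty(\T\times\R)$ bounds on $\pa_\theta u$: this hinges on the flow remaining a diffeomorphism, which in turn couples back to the $L^\infty$ bound on $\rho$. The density and derivative bounds must therefore be closed together rather than in isolation, and one has to verify that the no-vacuum property $\rho_0>0$ survives the whole interval — through $h\,\pa_\theta\eta=\rho_0$ and the finite-time density estimate of Remark \ref{rem_density} — so that $\pa_\theta\eta>0$ is never lost before time $T$.
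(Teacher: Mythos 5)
Your proposal is correct and follows essentially the same route the paper intends: Theorem \ref{thm_cri} is stated there as a direct consequence of Theorem \ref{thm_local2}, Proposition \ref{prop_cri}, and Proposition \ref{prop_ap}, and your continuation argument is precisely the glue the paper leaves implicit. You even fill in a detail the paper glosses over, namely the upper Riccati bound $D_t d\leq -(d-d_-^*)(d-d_+^*)$ needed to turn the one-sided bound of Proposition \ref{prop_cri}(i) into a genuine $L^\infty$ control of $\pa_\theta u$ (and the resulting positivity $\pa_\theta\eta=\exp(\int_0^t d)>0$ makes the Lagrangian-to-Eulerian conversion automatic).
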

\begin{remark}For reasons mentioned before, almost the same argument as above can be applied to the case of identical oscillators, i.e., $g(\Omega) = \delta_{\Omega_0}(\Omega)$ by replacing the weighted spaces $L^p_g(\T \times \R)$ and $H^s_g(\T \times \R)$ by $L^p(\T)$ and $H^s(\T)$, respectively.
\end{remark}
\begin{remark}\label{rem_comm_cri} The results of Theorem \ref{thm_cri} (ii) and Remark \ref{rem_density} (ii) give some possible finite-time synchronization. It is very hard to expect the finite-time synchronization phenomena in the classical Kuramoto models, for instances \eqref{particle_Ku} and \eqref{kinetic_Ku}, with a smooth distribution function $g(\Omega)$ for natural frequencies. However, as mentioned in Introduction, our hydrodynamic model \eqref{hydro_Ku} is the pressureless Euler-type system, and thus it may form singularities in finite time. It is unclear though how to rigorously justify that this finite-time blow-up of solutions implies the finite-time synchronization. With regard to this matter, we will numerically examine the time evolution of solutions to the system \eqref{hydro_Ku} in the next section.
\end{remark}
%
%
%
%
\section{Numerical experiments}\label{sec_numer}\setcounter{equation}{0}
In this section, we present several numerical experiments validating our theoretical results for the system \eqref{hydro_Ku}. We also numerically examine that our system \eqref{hydro_Ku} exhibits the phase transitions and hysteresis phenomena like the particle system \eqref{particle_Ku}. For the numerical integration of the system \eqref{hydro_Ku}, the finite volume method is used, in particular, we employ Kurganov-Tadmor central scheme proposed in \cite{KT} for the evaluation of numerical fluxes. A brief description of the scheme is provided below.
\subsection{Numerical scheme} Note that the system \eqref{hydro_Ku} can be written in the following form.
\begin{align*}
\begin{aligned}
\pa_t q + \pa_\theta F(q) = G(q).
\end{aligned}
\end{align*}
Here we set
\[
q = \begin{pmatrix} \rho \\ u \end{pmatrix} , \quad F(q) = \begin{pmatrix} \rho u \\ u^2/2 \end{pmatrix},
\]
and the source term is given by
\[ G(q)= \left(0, \frac 1m\big(-u + \Omega + K\int_{\T \times \R}\sin(\theta_* - \theta)\rho(\theta_*,\Omega_*,t)g(\Omega_*)\,d\theta_* d\Omega_* \big) \right)^T.
\]
The cell average $Q_j$ over the grid cell $\mathcal C_{j} = (\theta_{j-1/2},\theta_{j+1/2})$ at time $t$ is given by
\[
Q_j = \frac{1}{\Delta \theta}\int_{\mathcal C_j}q(\theta,t)\,d\theta,
\]
where $\Delta \theta = \theta_{j+1/2} - \theta_{j-1/2}$. Then, for given $\Omega$, the finite volume method is formulated as follows:
\[
\frac{d}{dt}Q_{j}(t) = - \frac{F^*_{j+1/2}(t)-F^*_{j-1/2}(t)}{\Delta \theta} + G^*_j(t).
\]
Here, $F^*_{j \pm 1/2}$ denotes the numerical flux through the cell interface at $\theta_{j \pm 1/2}$, which will be given later, and $G_j^*$ is an associated source term evaluated at $\theta = \theta_j$ where the integration is performed using the midpoint rule. The {\it reconstruction} first requires a piecewise linear function
\begin{equation} \label{piecewise}
\tilde q^n(x,t_n) = Q_j^n + \sigma_j^n(\theta-\theta_j), \qquad \theta_{j-1/2} \leq \theta < \theta_{j+1/2},
\end{equation}
where $\theta_j$ is the center of $\mathcal C_j$, and $\sigma_j^n$ denotes an approximation to the spatial derivative on $\mathcal C_j$. In order to prevent nonphysical oscillations, we use the slope limiter method which was introduced by van Leer  \cite{van}. In particular, we use {\it minmod slope} here:
\[
\sigma_j^n = \mbox{minmod}\left(\frac{Q_j^n - Q_{j-1}^n}{\Delta \theta}, \frac{Q^n_{j+1}-Q^n_j}{\Delta \theta} \right),
\]
where the minmod function is defined by 
\begin{align*}
\mbox{minmod}(a,b) = \begin{cases}
a \quad  &\mbox{if} ~|a| < |b| ~\mbox{and}~ ab>0,\\
b \quad &\mbox{if} ~|b| < |a| ~\mbox{and}~ ab>0,\\
0 \quad &\mbox{if}~ ab \leq 0.
\end{cases}
\end{align*}
The numerical fluxes are now evaluated as
\begin{align*}
F^*_{j+1/2}:= \frac{a^+_{j+1/2}F(q_j^E)-a^-_{j+1/2}F(q^W_{j+1})}{a^+_{j+1/2}-a^-_{j+1/2}} + \frac{a^+_{j+1/2}a^-_{j+1/2}}{a^+_{j+1/2}-a^-_{j+1/2}}(q^W_{j+1} - q^E_j).
\end{align*}
Here, $a^+_{j+1/2}$ and $a^-_{j+1/2}$ denote the largest and the smallest speed of characteristic at the cell interfaces. The reconstructed values at the cell interface $\theta_{j+1/2}$ using \eqref{piecewise} are given by
\[
q_j^E = Q_j^n +\sigma _j^{n} \frac{\Delta \theta}{2}, \qquad q_{j+1}^W = Q_{j+1}^n - \sigma_{j+1}^n \frac{\Delta \theta}{2}.
\]
Finally, the second-order Runge--Kutta method is employed for time integrations.

\subsection{Time evolutions of density and velocity}
In this subsection, we present the time evolutions of the density and the velocity profiles at different time $t$'s for both identical and nonidentical oscillators  case.  For the identical case, the $\Omega$-dependences of $\rho$ and $u$ are simply neglected, see Section \ref{sec_sync}. We identify $\T$, the $\theta$-domain, as $[-\pi,\pi]$ for the numerical computation domain and set the initial density $\rho_0$ and the distribution function $g(\Omega)$ the standard normal distribution. The numbers of $\theta$-grid and $\Omega$-grid are $1000$ and $600$, respectively.

\begin{figure}[ht]
\centering
\subfigure[Density - subcritical case]{  
\includegraphics[width=2.8in]{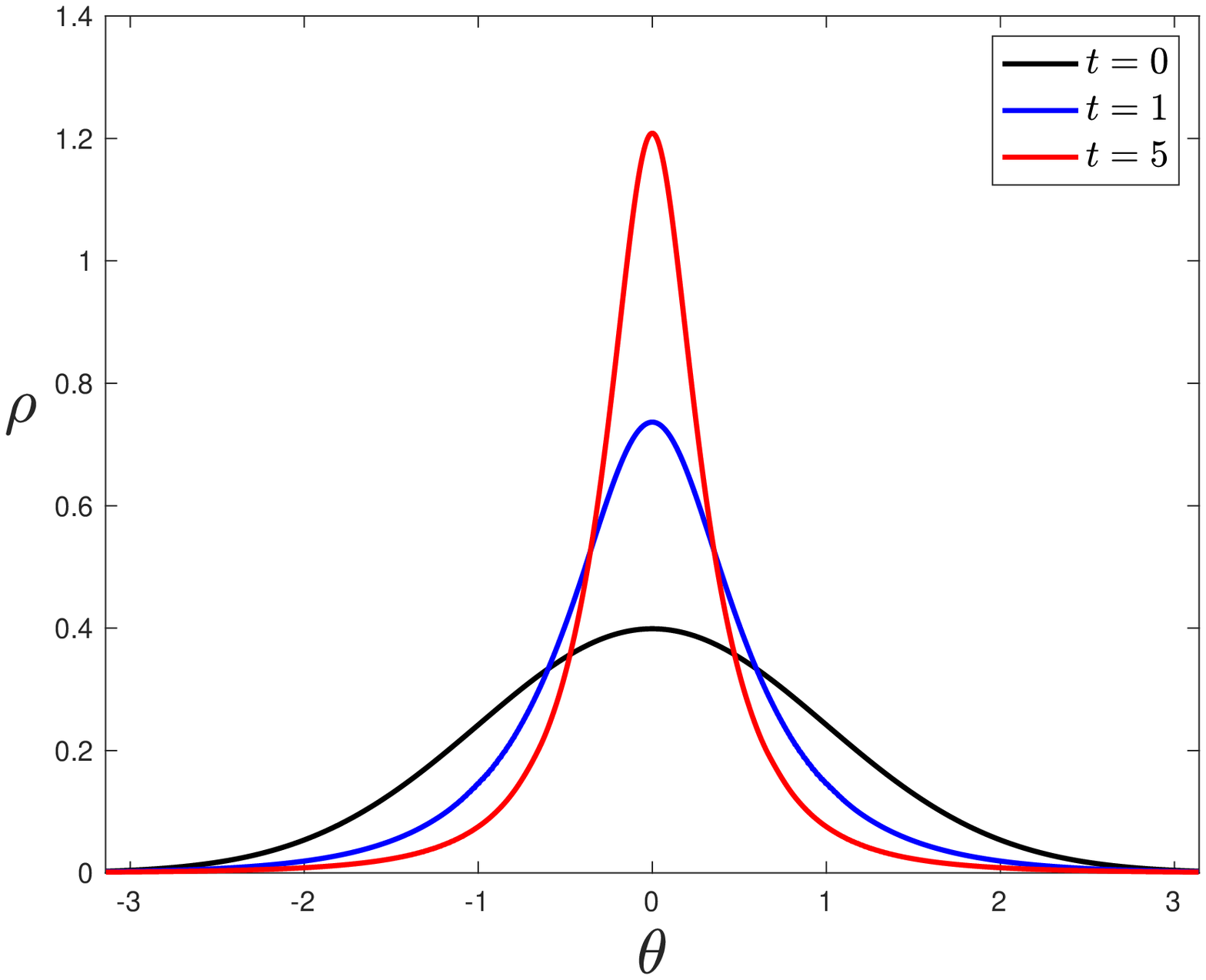}} 
\subfigure[Velocity - subcritical case]{  
\includegraphics[width=2.8in]{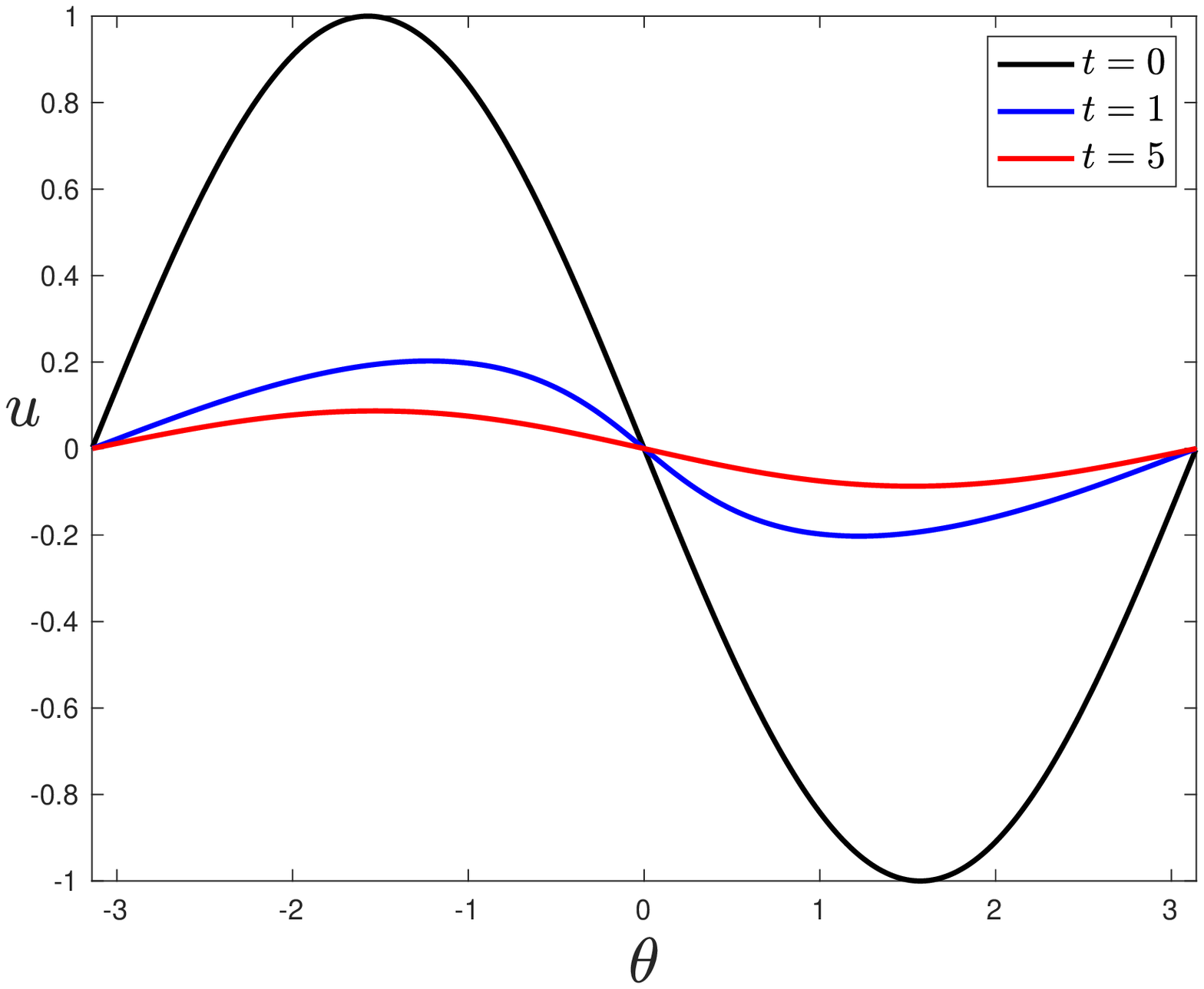}} 
\subfigure[Density - supercritical case] { 
\includegraphics[width=2.8in]{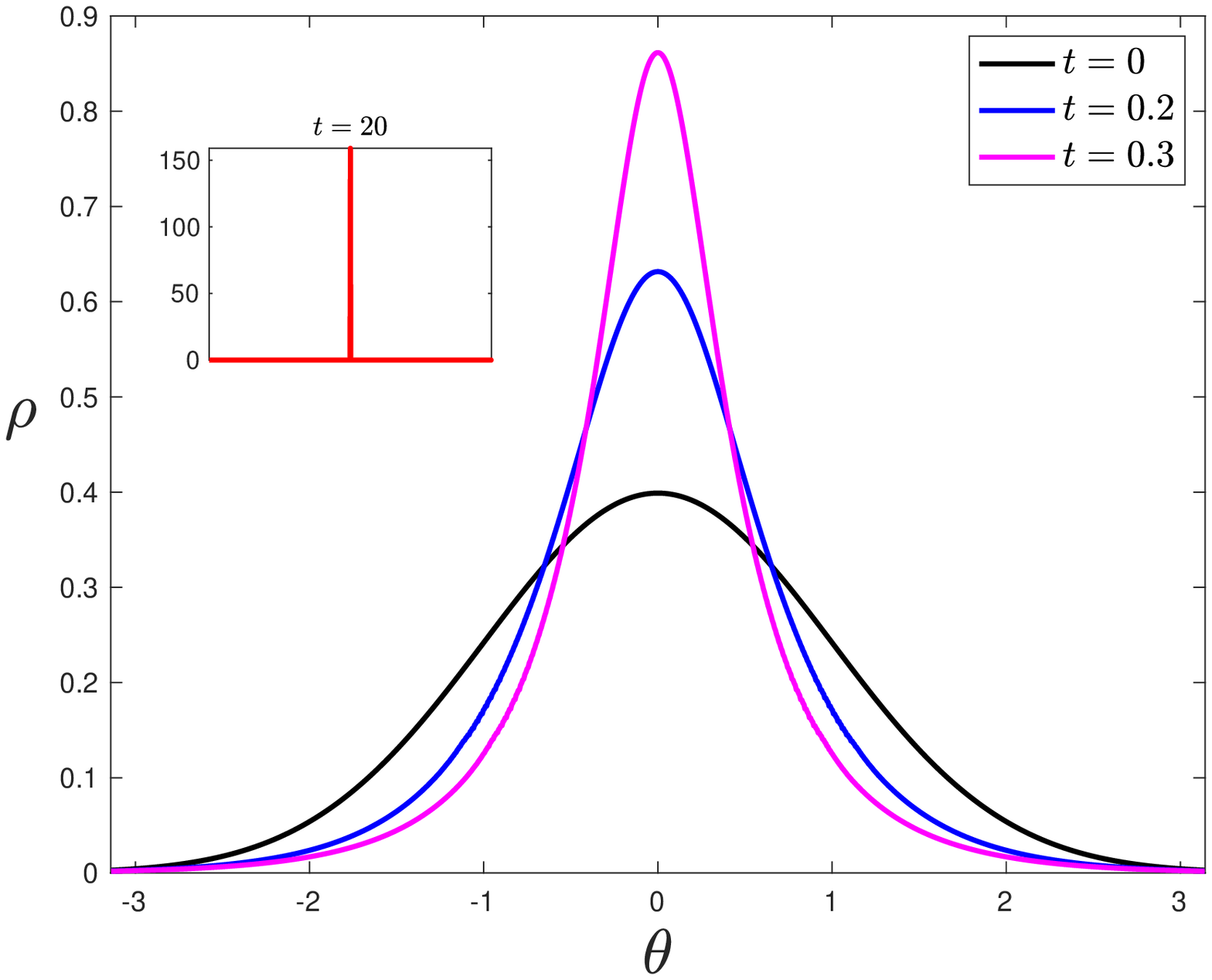}} 
\subfigure[Velocity - supercritical case]{  
\includegraphics[width=2.8in]{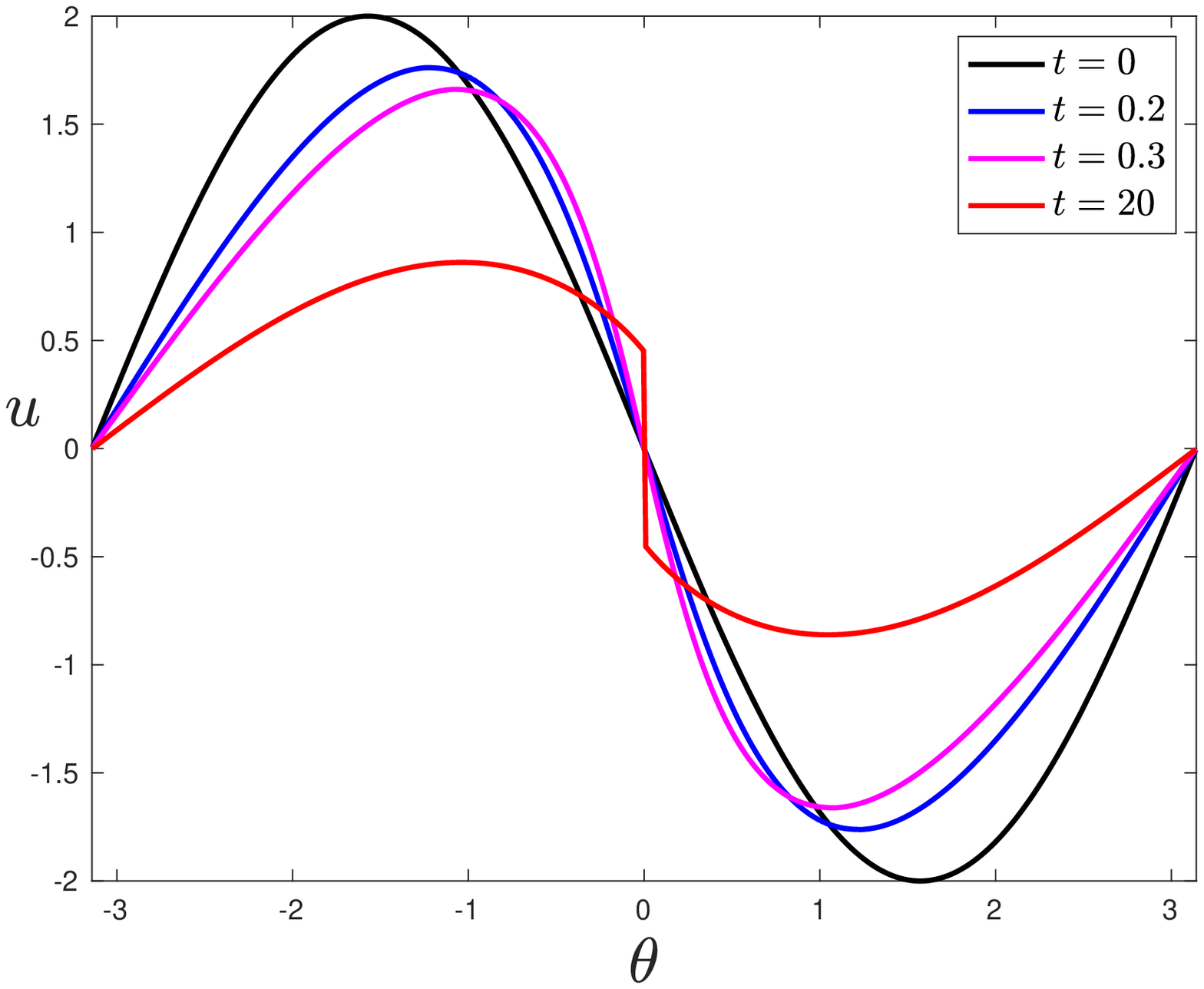}} 
\subfigure[Density - supercritical case] { 
\includegraphics[width=2.8in]{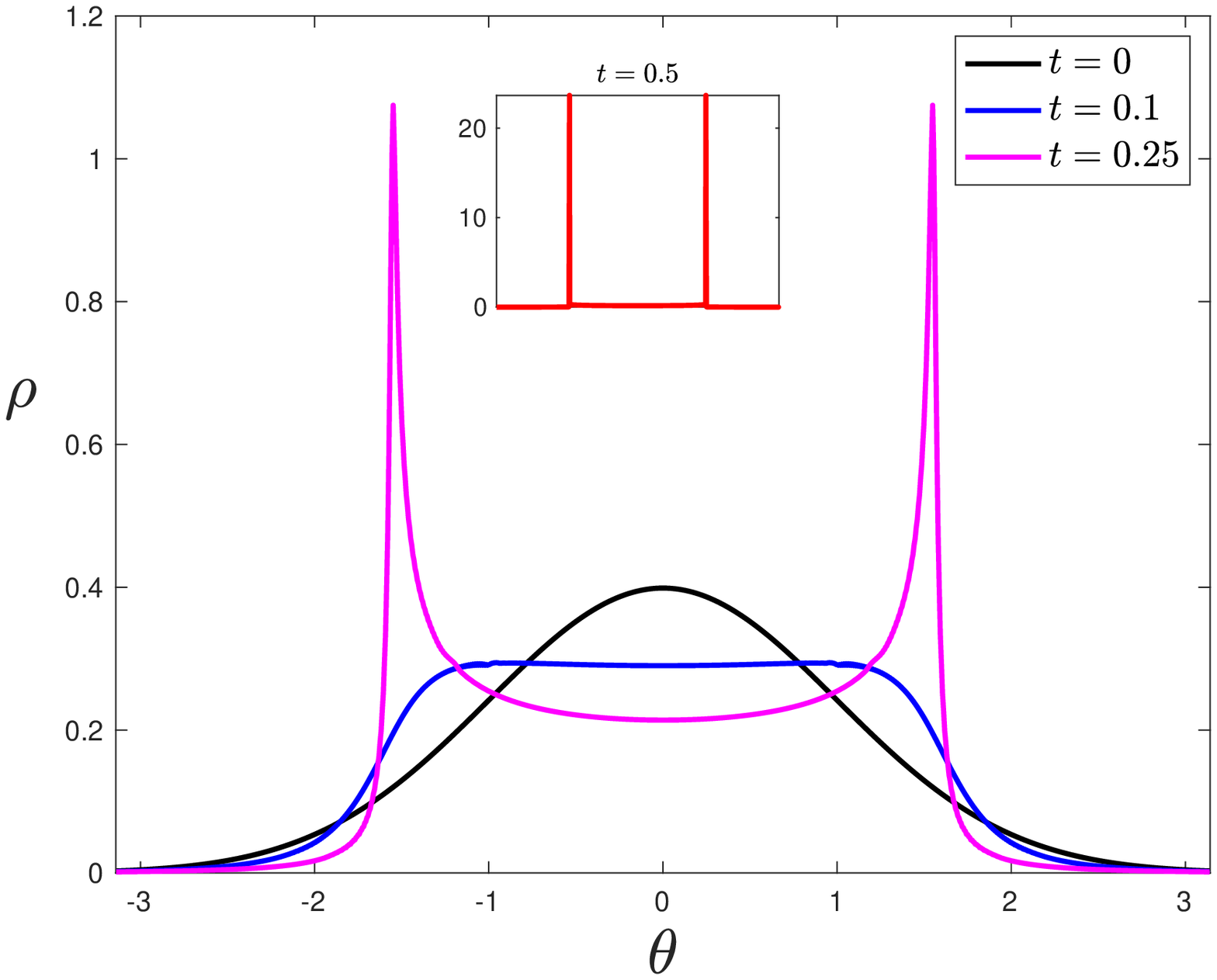}} 
\subfigure[Velocity - supercritical case]{  
\includegraphics[width=2.8in]{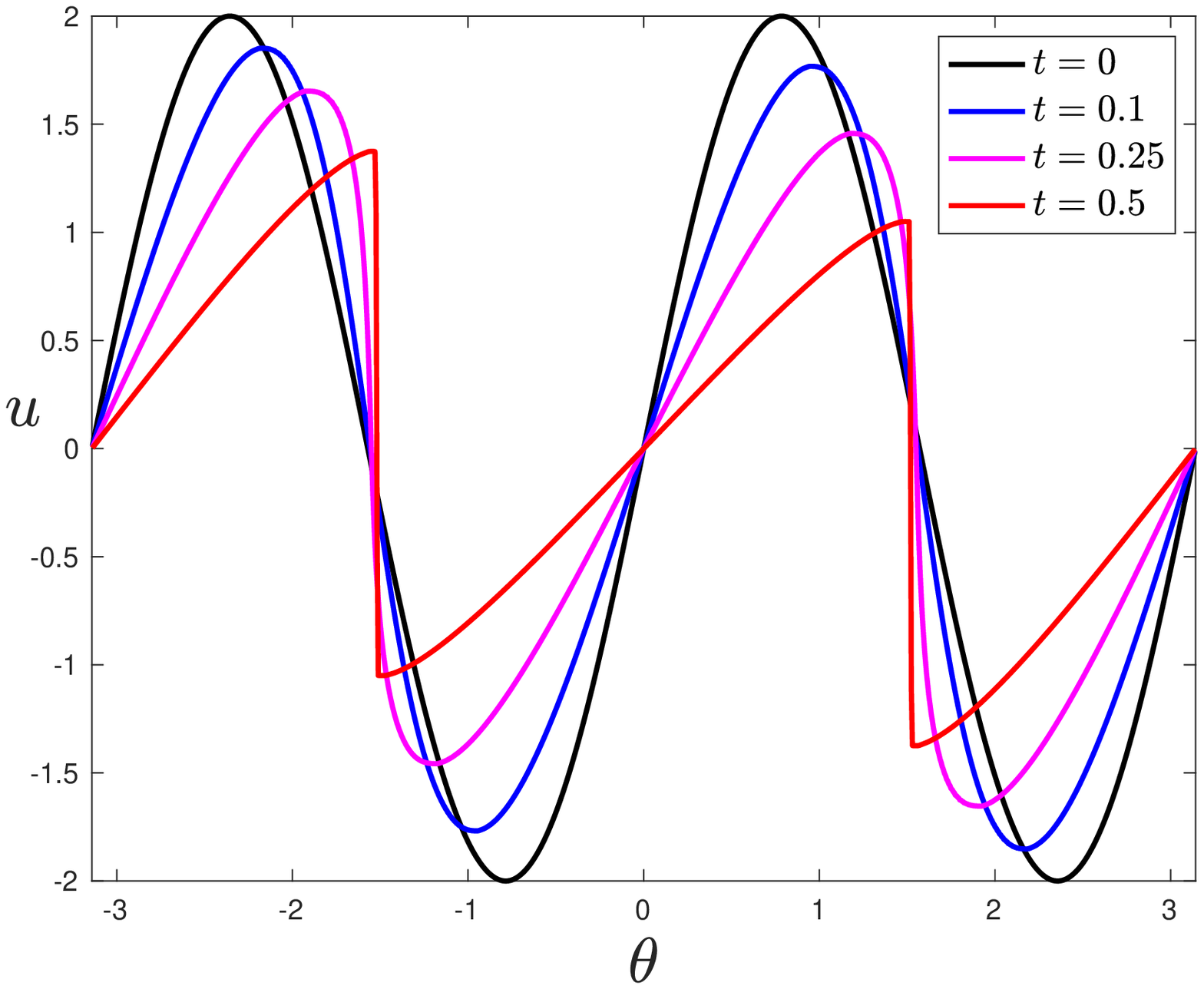}} 
\caption{Identical natural frequencies}
\label{fig_id} 
\end{figure}

Figure \ref{fig_id} exhibits the time evolutions of $\rho$ and $u$ for the identical cases. The parameters and the initial data for Figures \ref{fig_id} (a) and (b) are selected as
\[
m=0.5, \quad  K=0.1, \quad \mbox{and} \quad u_0(\theta) = -\sin \theta
\]
so that they lie in subcritical region. We can observe the contractivity of support of $\rho$, and it is consistent with our theoretical result, Remark \ref{rmk_dirac}. Nevertheless, $\rho$ does not blow-up in finite time, see Remark \ref{rem_density}. We also note that the velocity diameter $d_v$ decreases. On the other hand, Figures \ref{fig_id} (c) and (d) show the profiles of $\rho$ and $u$ for supercritical region around $\theta=0$ with the following initial velocity and $m, K$:
\[
m=1, \quad K=1, \quad \mbox{and} \quad u_0(\theta) = -2\sin \theta.
\]
As proved in Proposition \ref{prop_cri} and Remark \ref{rem_density}, we can observe the finite-time blow-up of solutions. Furthermore, if we set 
\[
m=1, \quad K=1, \quad \mbox{and} \quad u_0(\theta) = 2\sin 2\theta
\]
so that we have a supercritical case around $\theta= \pm \pi/2$, the profiles show the finite-time blow-up around these points as can be seen in Figures \ref{fig_id} (e) and (f). It is remarkable that the density forms Dirac measures in finite time in the supercritical case, and this supports that our model exhibits the finite-time synchronization phenomena. 

We next consider the nonidentical case, that is, the distribution function $g$ for natural frequencies is not the form of Dirac measure centered on some fixed point $\Omega_0 \in \R$. 

\begin{figure}[ht]
\centering
\subfigure[Density - subcritical region]{  
\includegraphics[width=2.8in]{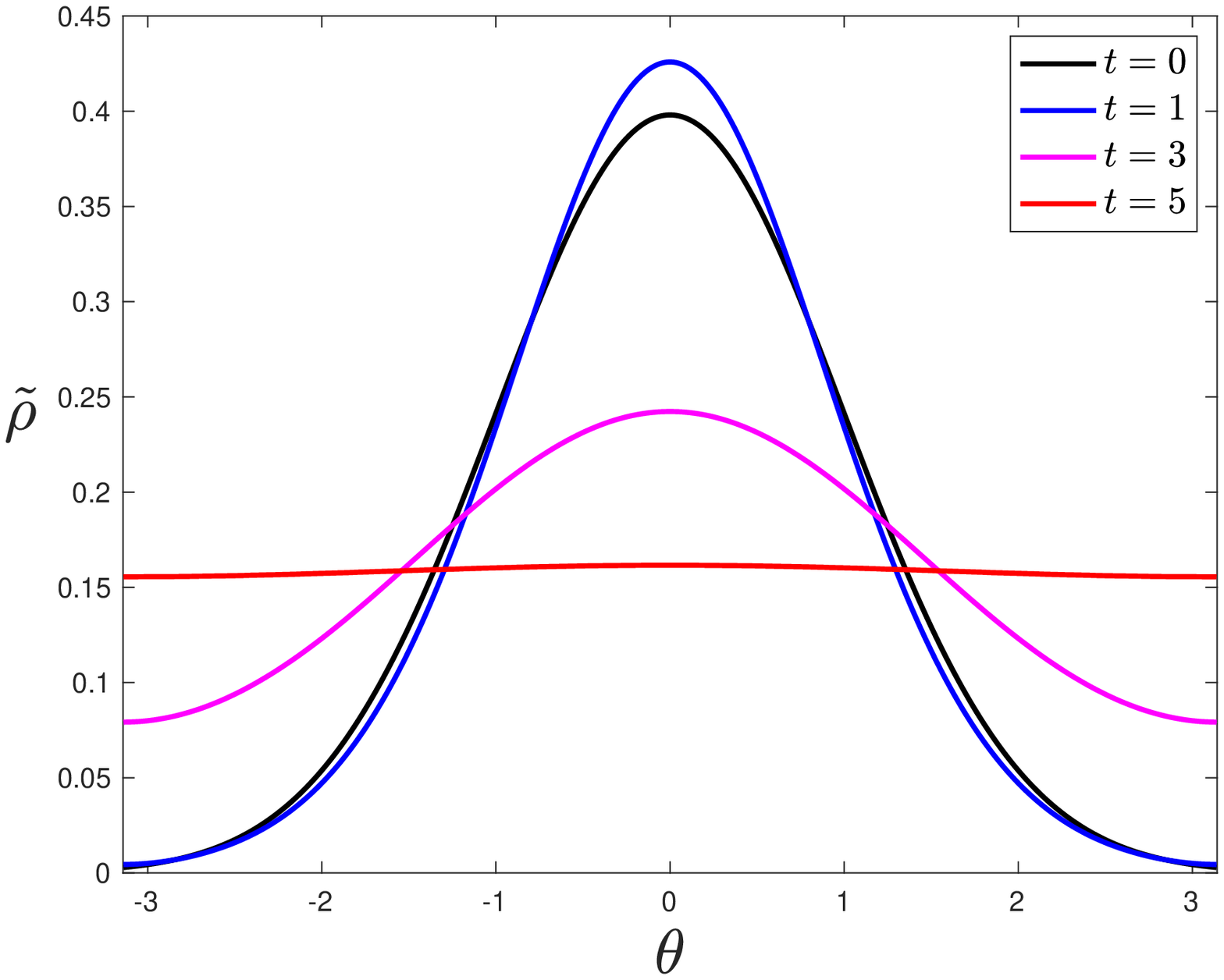}} 
\subfigure[Velocity - subcritical region]{  
\includegraphics[width=2.8in]{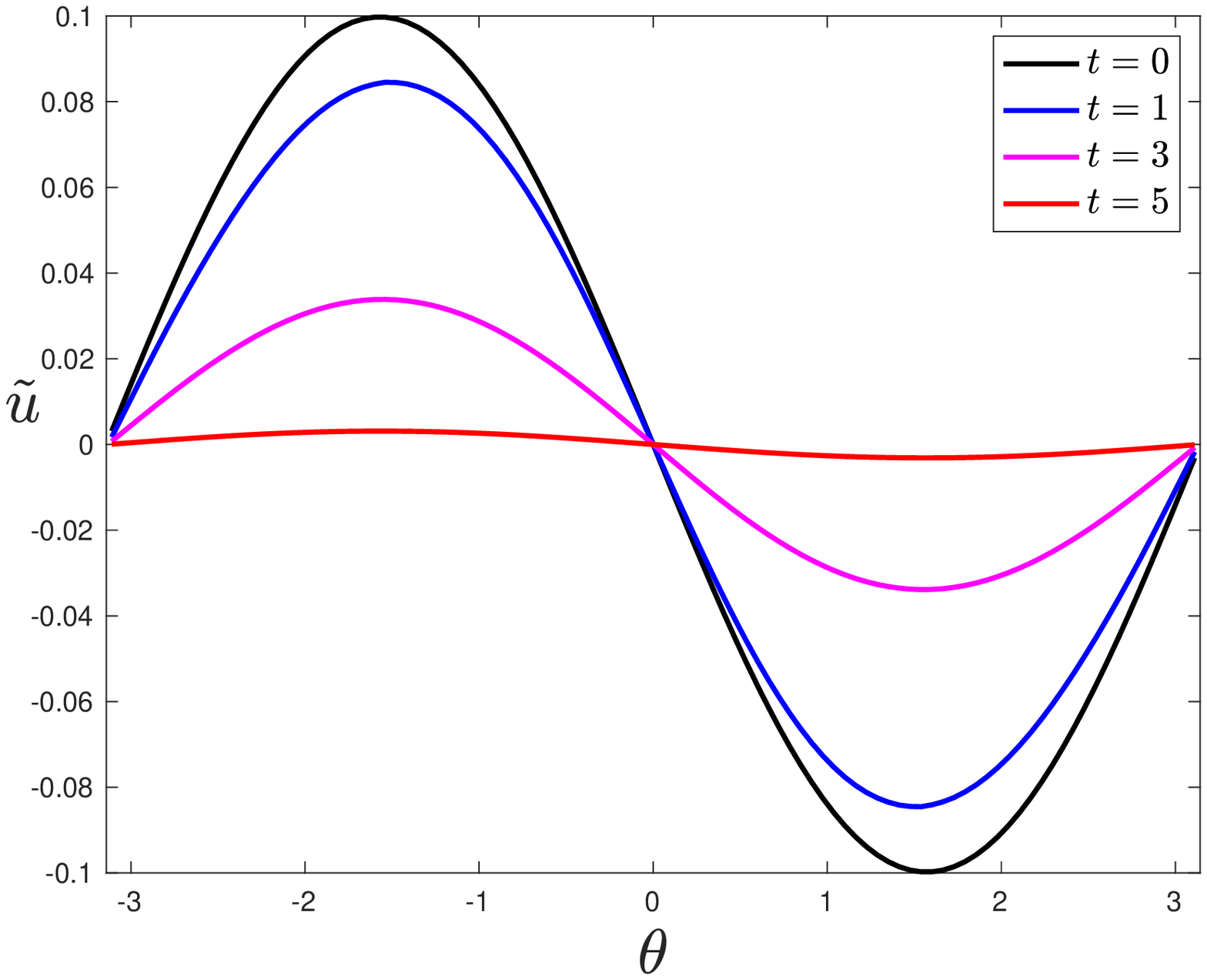}} 
\subfigure[Density - supercritical region] {  
\includegraphics[width=2.8in]{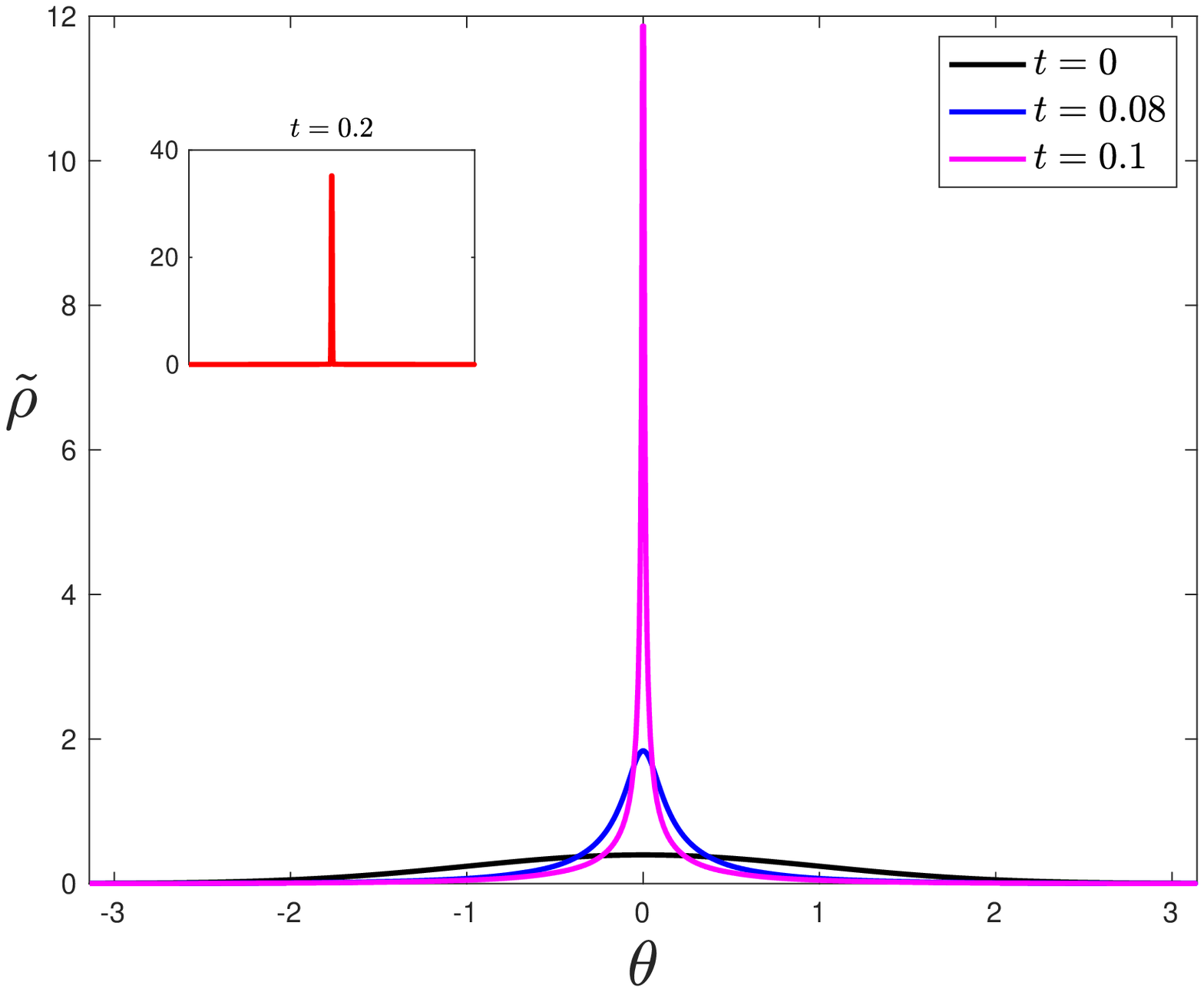}} 
\subfigure[Velocity - supercritical region]{  
\includegraphics[width=2.8in]{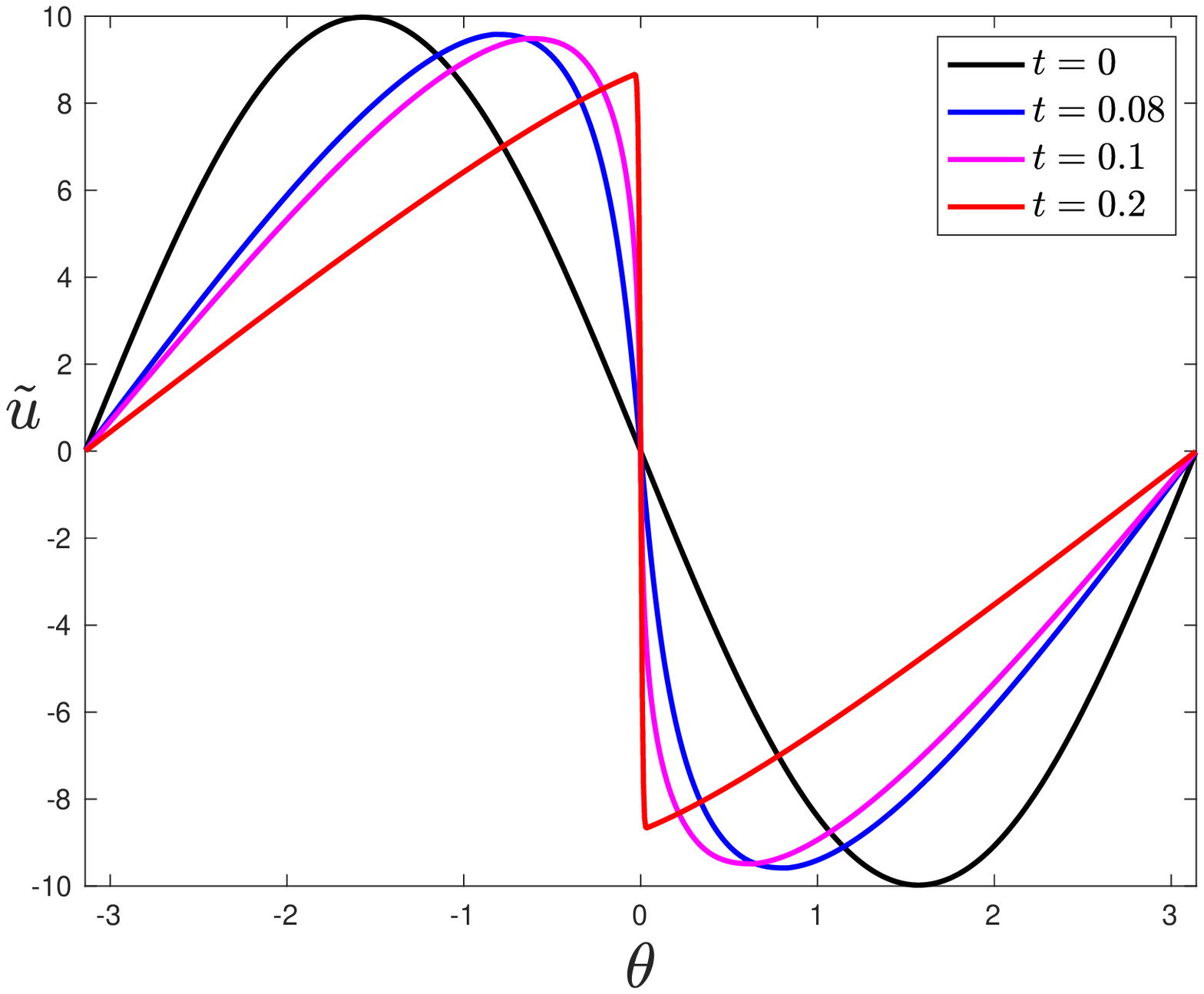}} 
\subfigure[Density - supercritical region] { 
\includegraphics[width=2.8in]{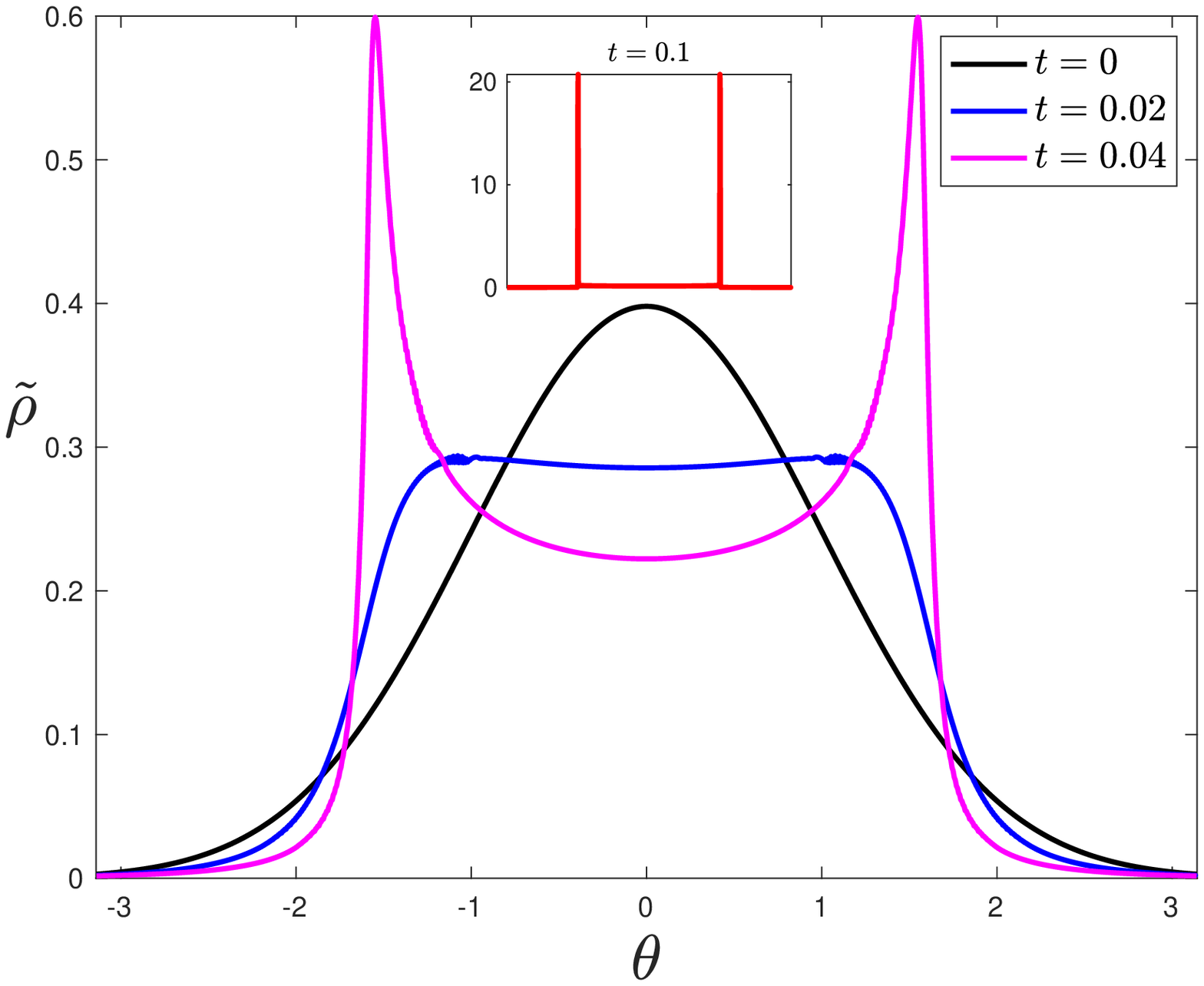}} 
\subfigure[Velocity - supercritical region]{  
\includegraphics[width=2.8in]{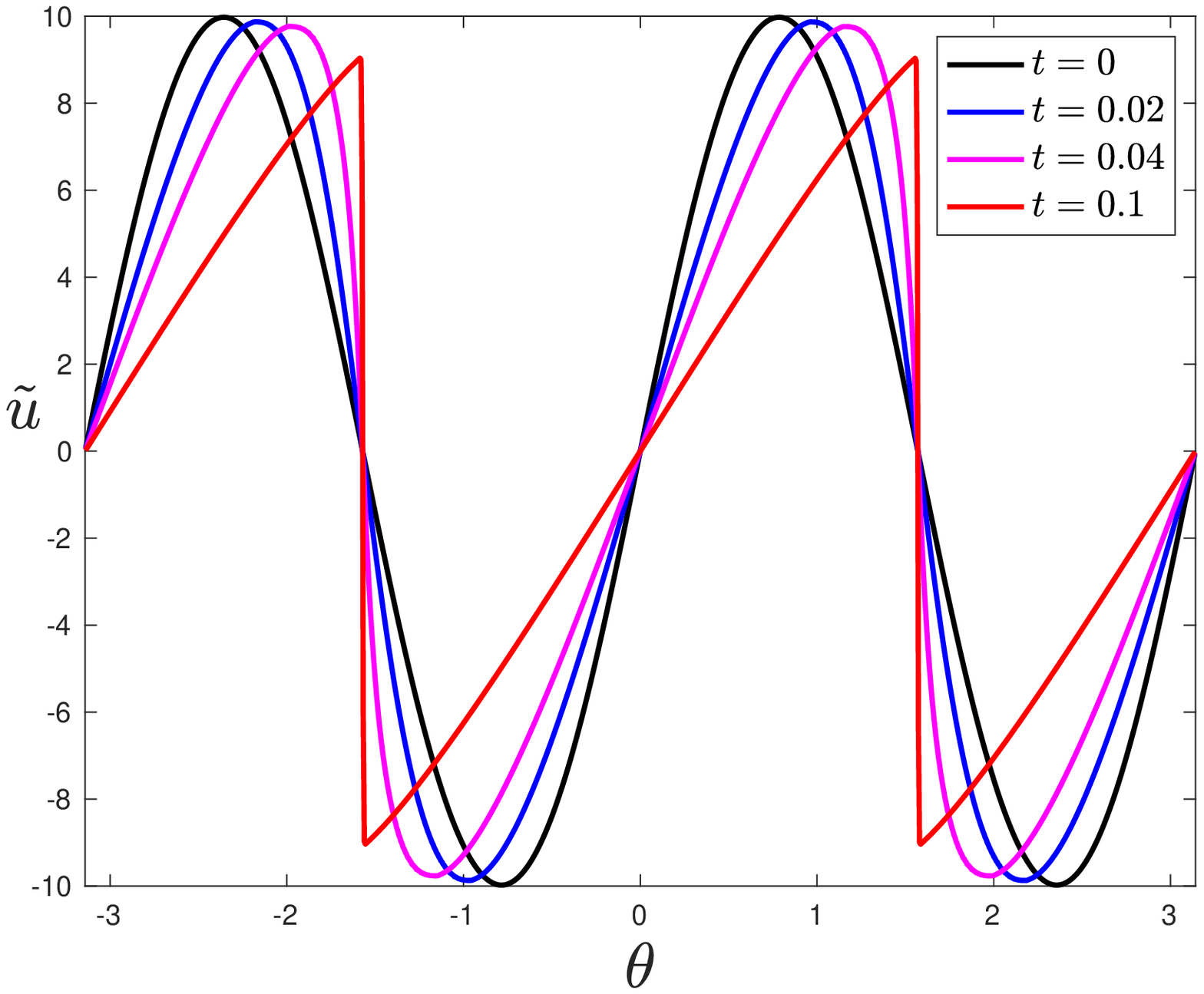}} 
\caption{Nonidentical natural frequencies }
\label{fig_nid}
\end{figure}

\begin{figure}[ht]
\centering
\subfigure[Density at $t=0$]{  
\includegraphics[width=2.8in,height=1.5in]{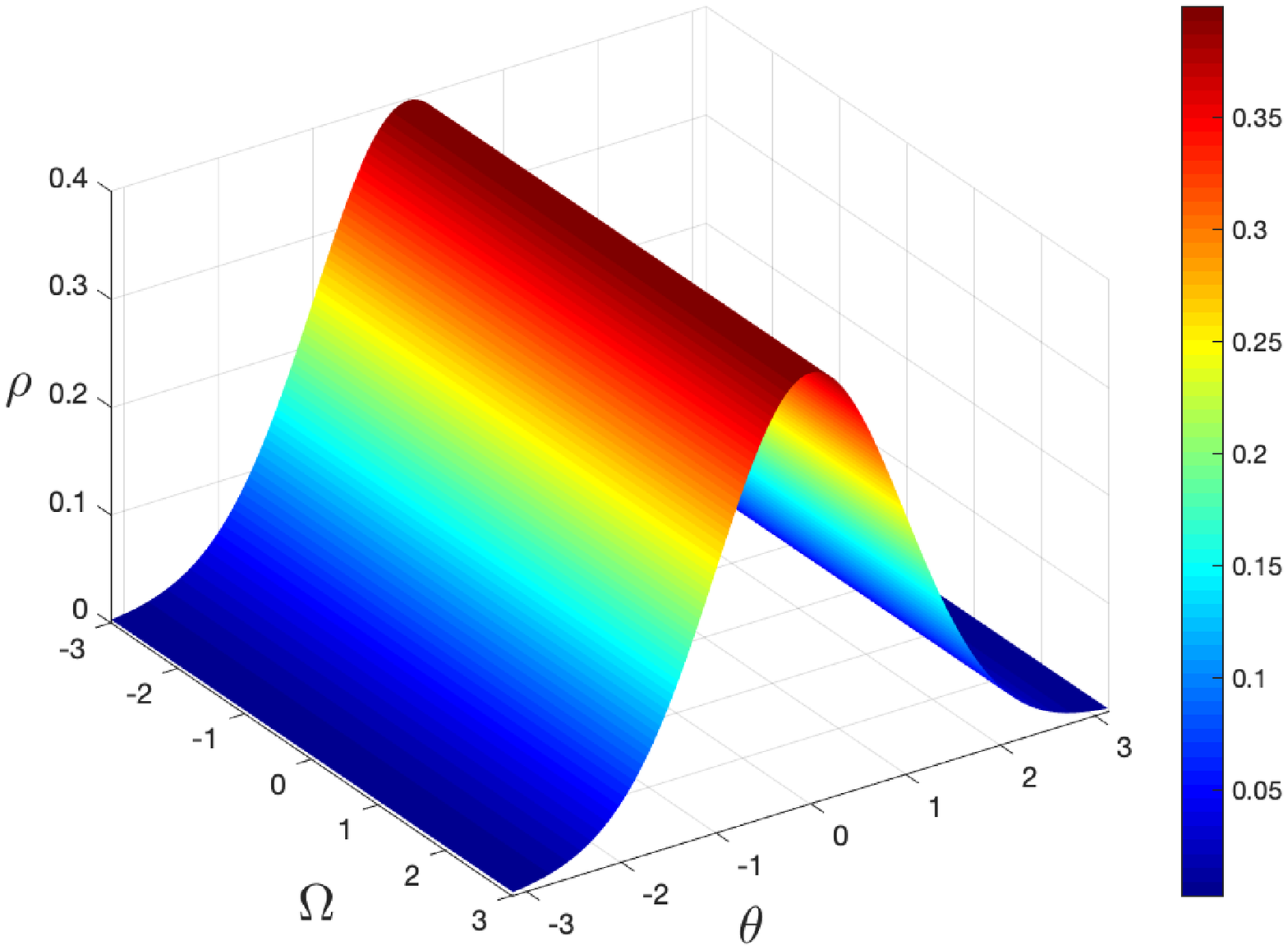}} 
\subfigure[Velocity at $t=0$]{  
\includegraphics[width=2.8in,height=1.5in]{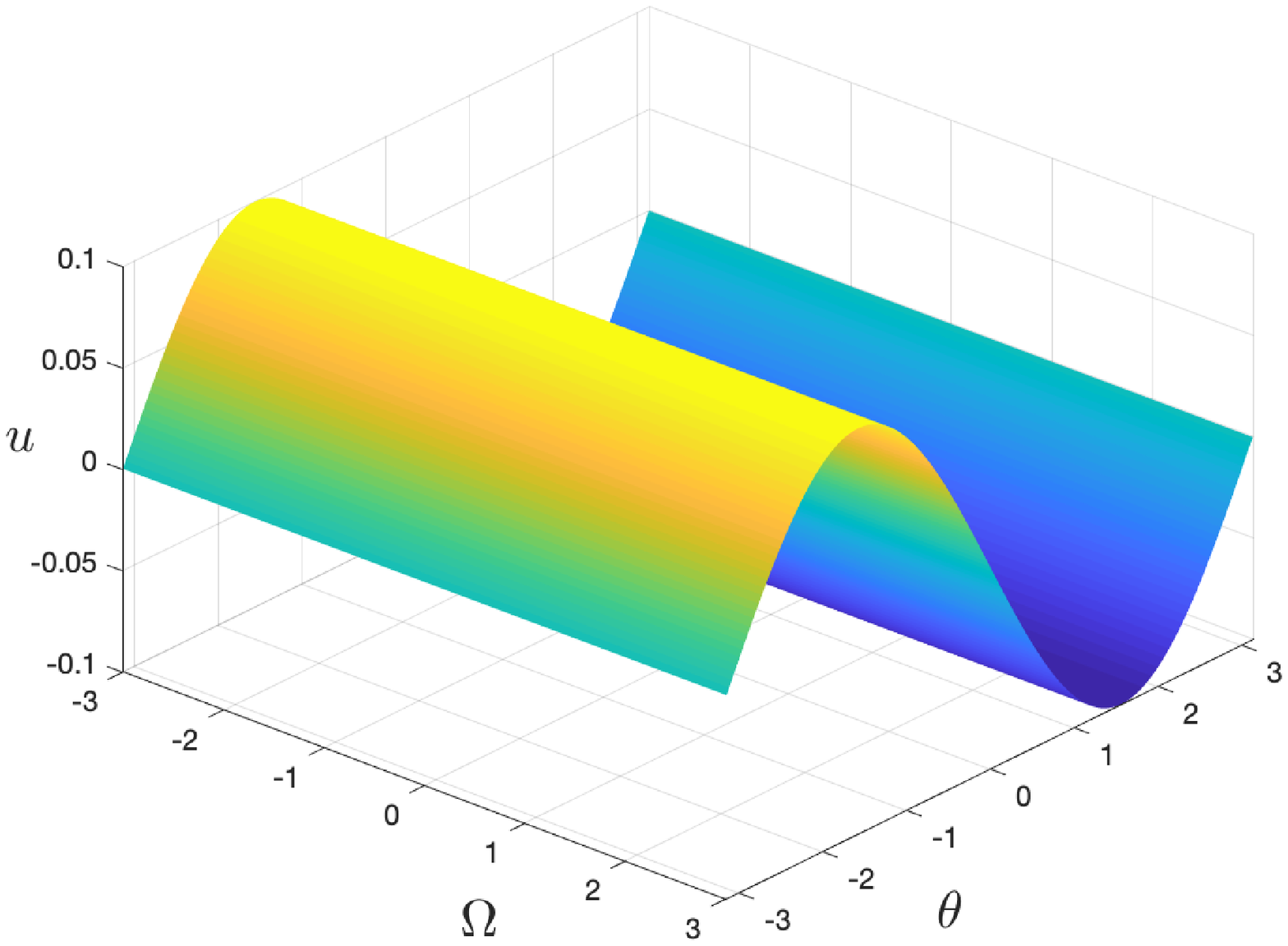}} 
\subfigure[Density at $t=1$] { 
\includegraphics[width=2.8in,height=1.5in]{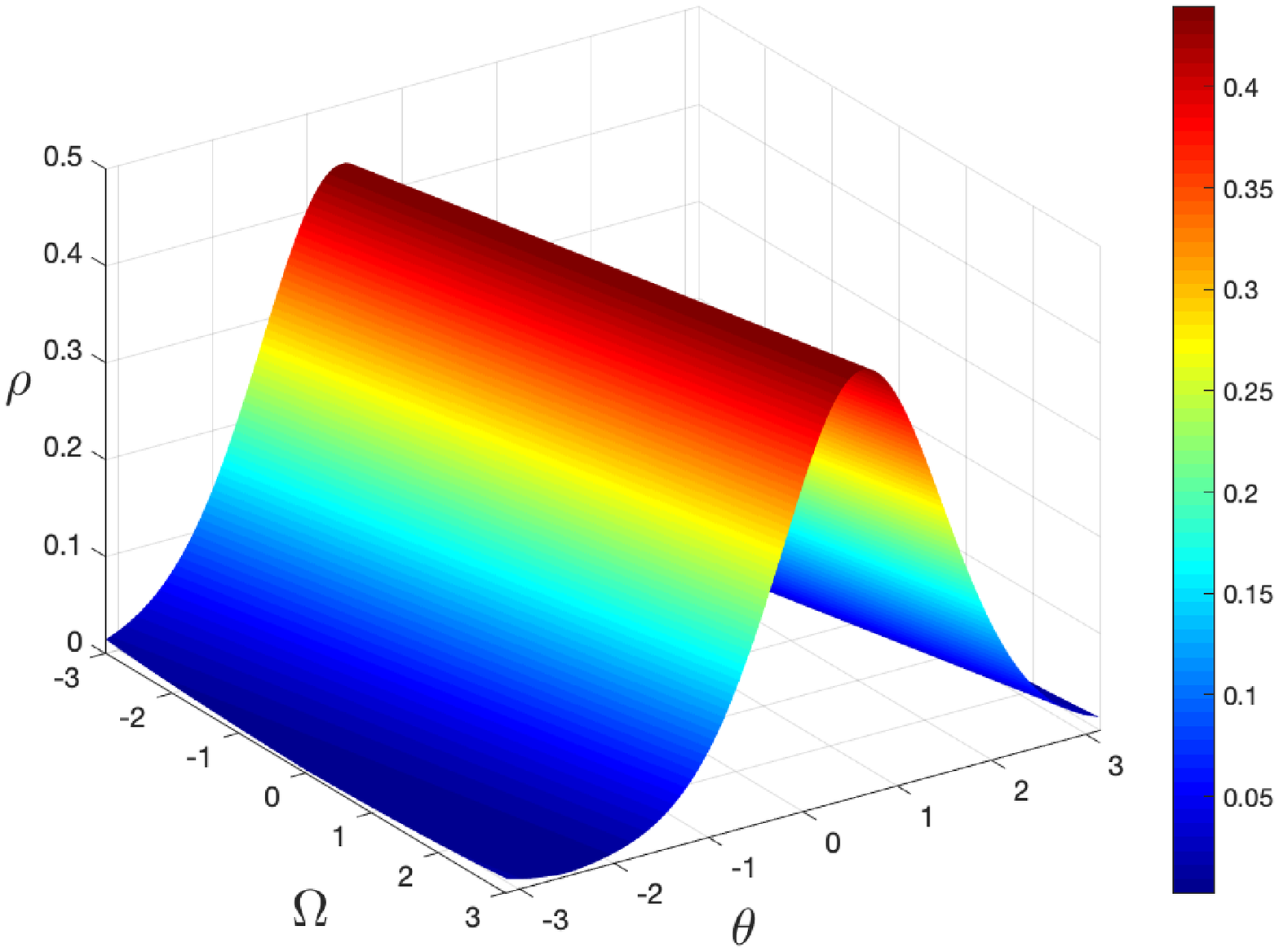}} 
\subfigure[Velocity at $t=1$]{  
\includegraphics[width=2.8in,height=1.5in]{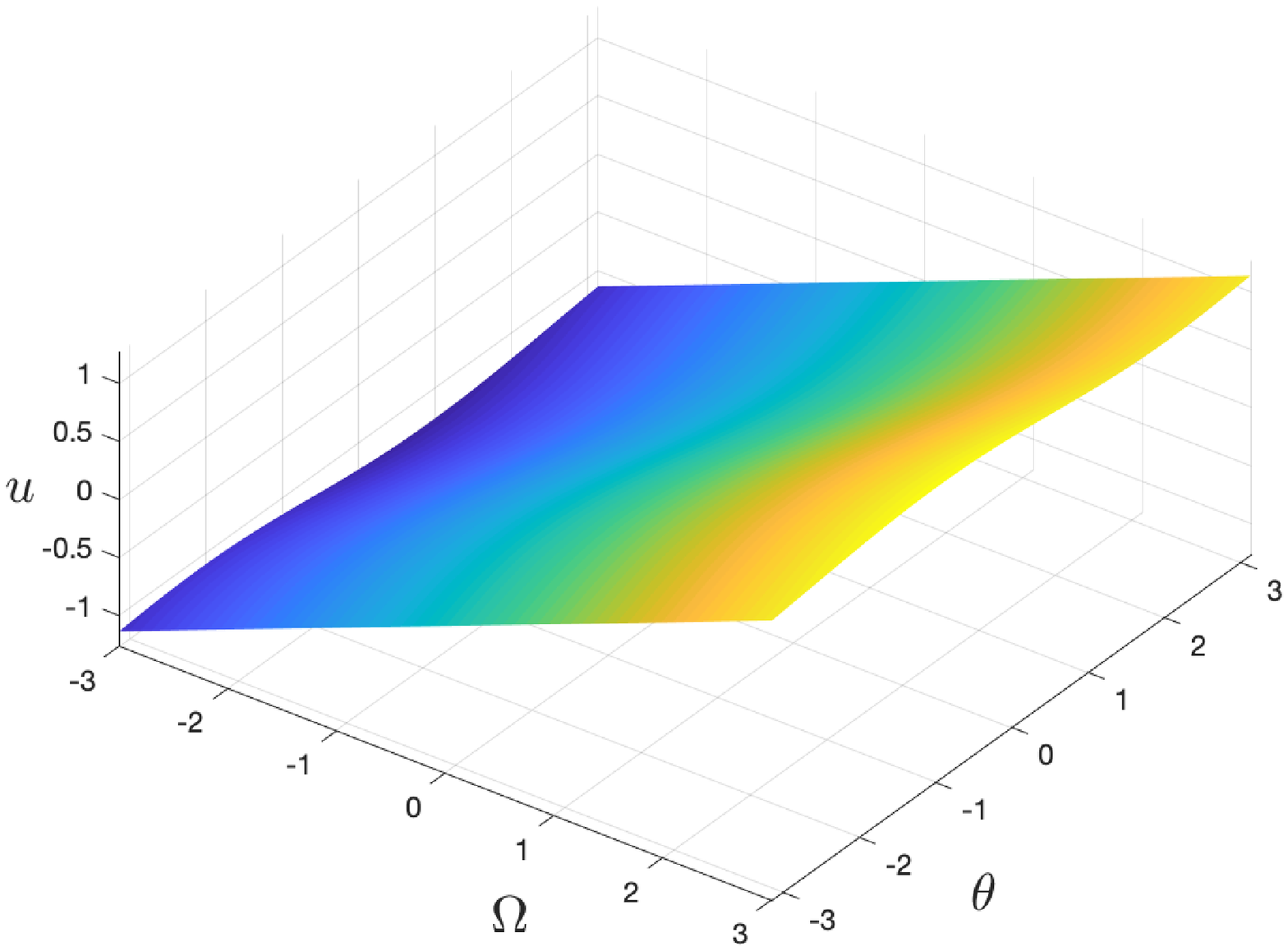}} 
\subfigure[Density at $t=3$] { 
\includegraphics[width=2.8in,height=1.5in]{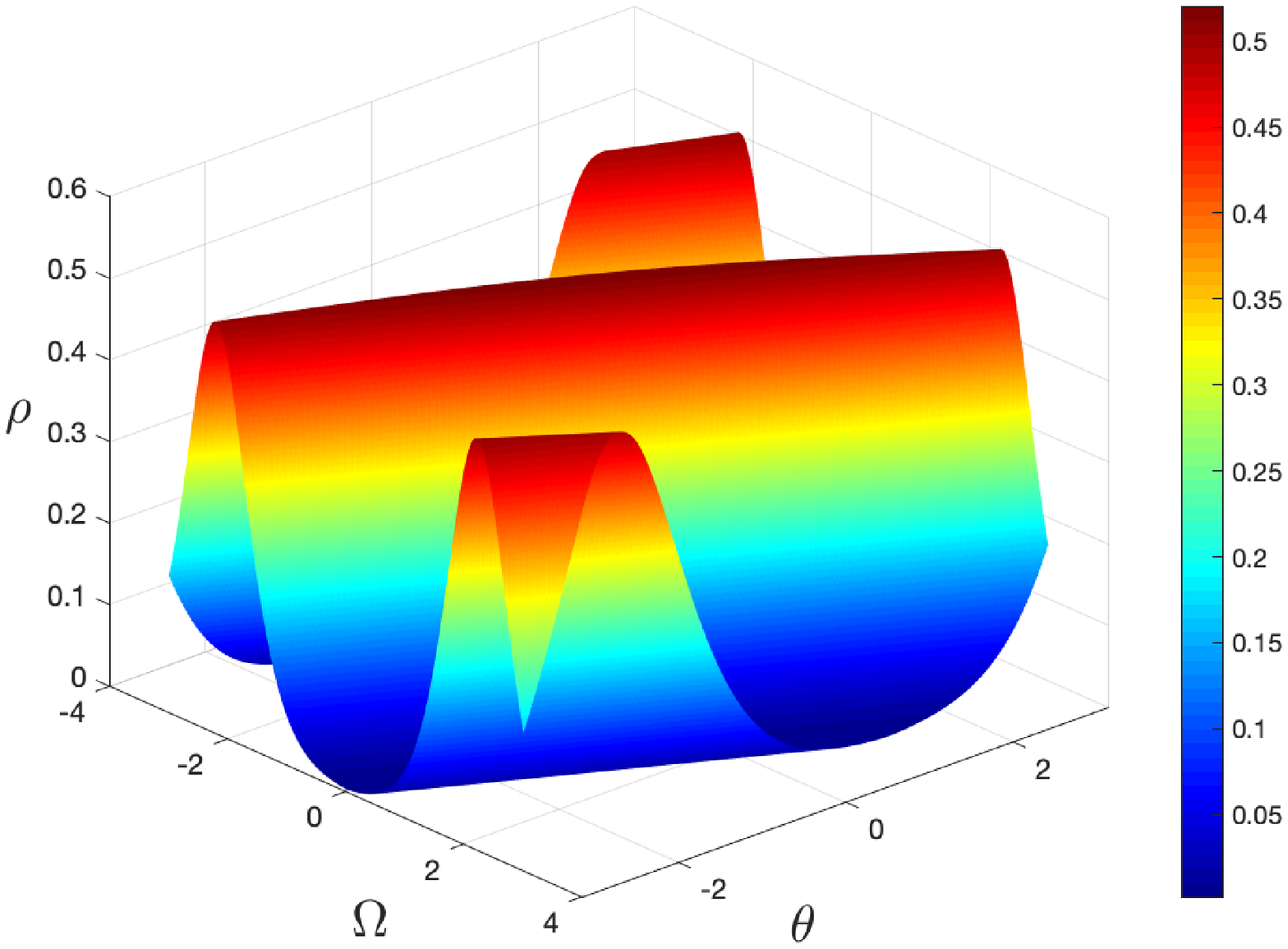}} 
\subfigure[Velocity at $t=3$]{  
\includegraphics[width=2.8in,height=1.5in]{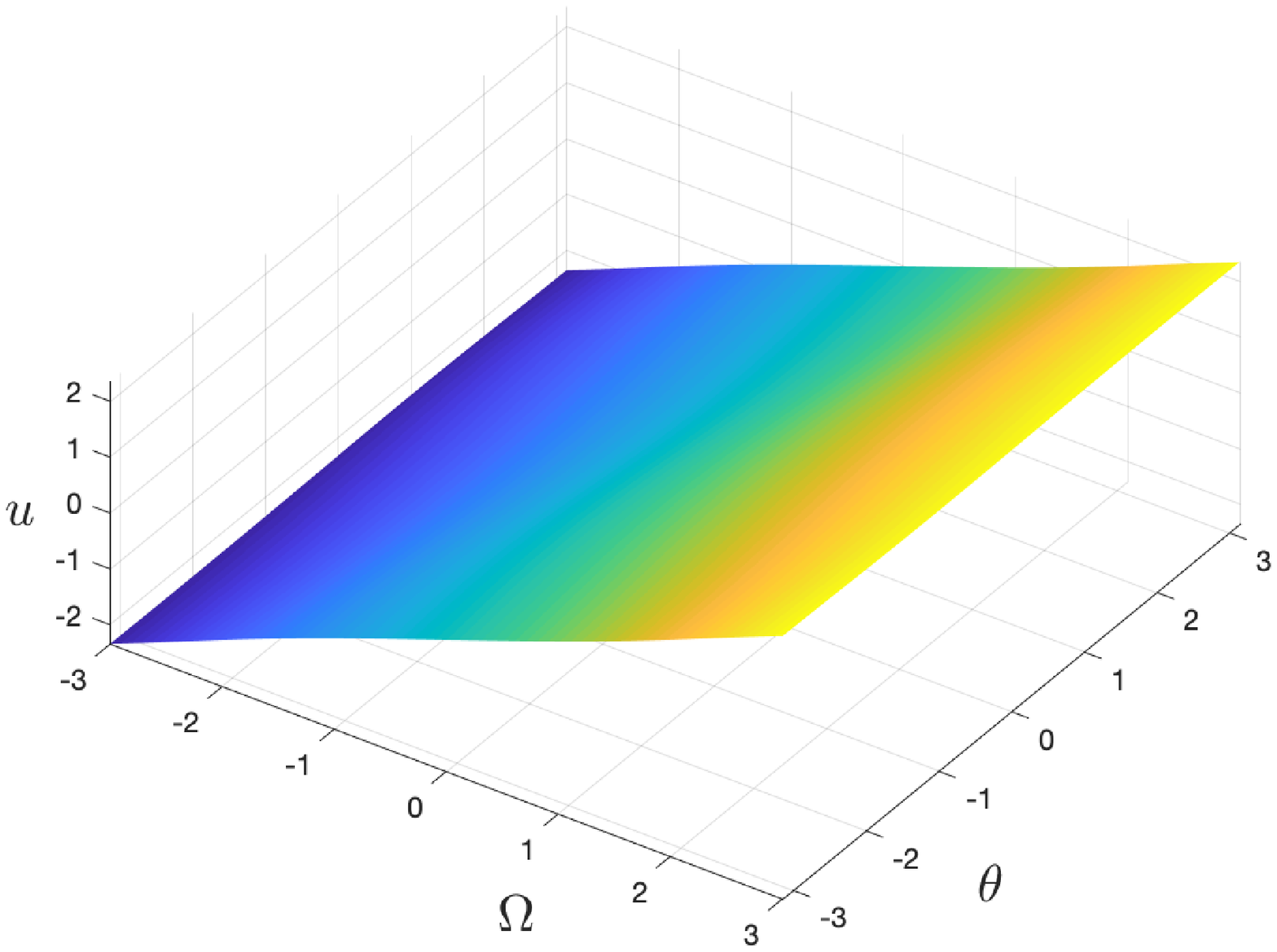}} 
\subfigure[Density at $t=5$] { 
\includegraphics[width=2.8in,height=1.5in]{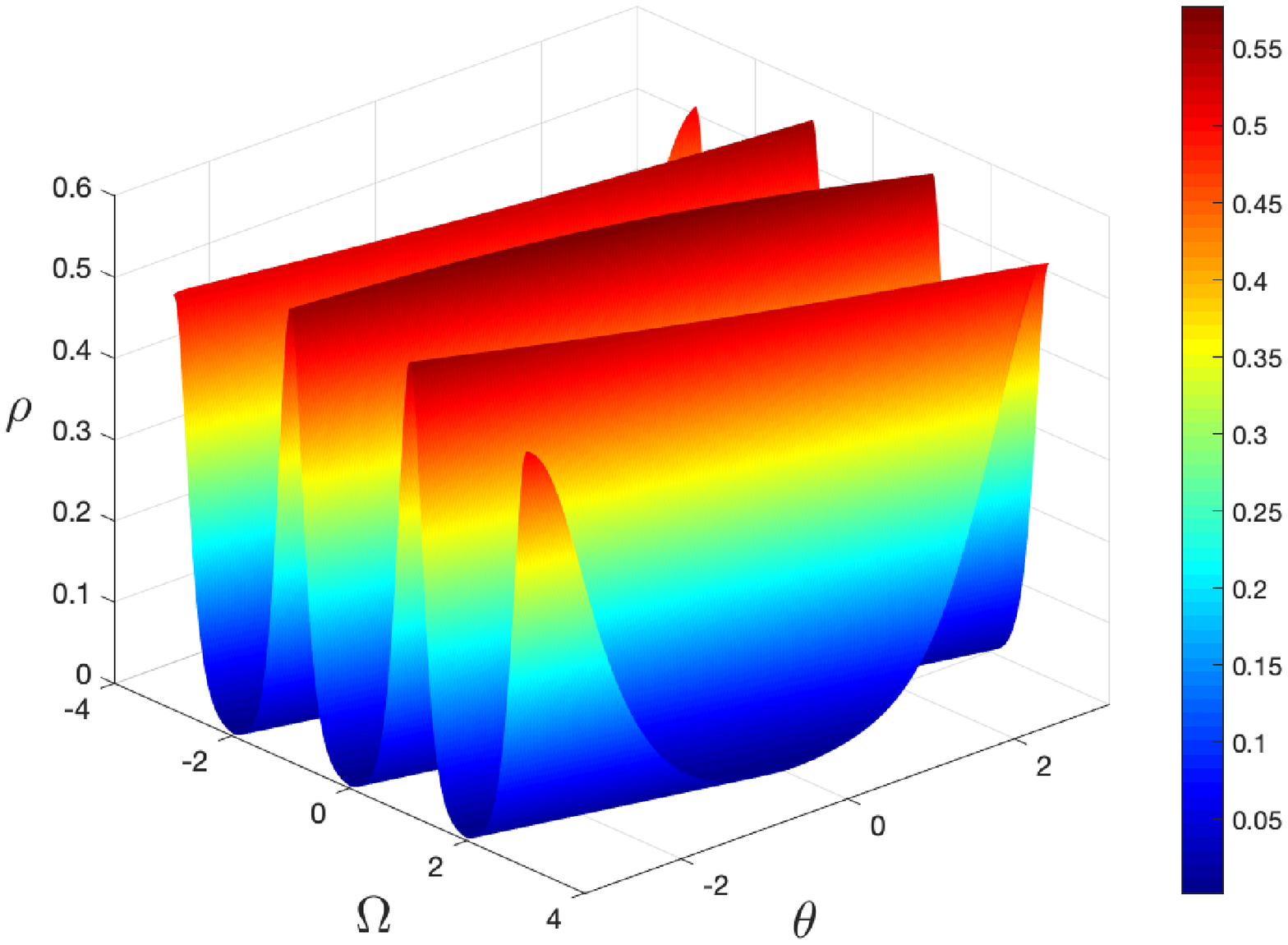}} 
\subfigure[Velocity at $t=5$]{  
\includegraphics[width=2.8in,height=1.5in]{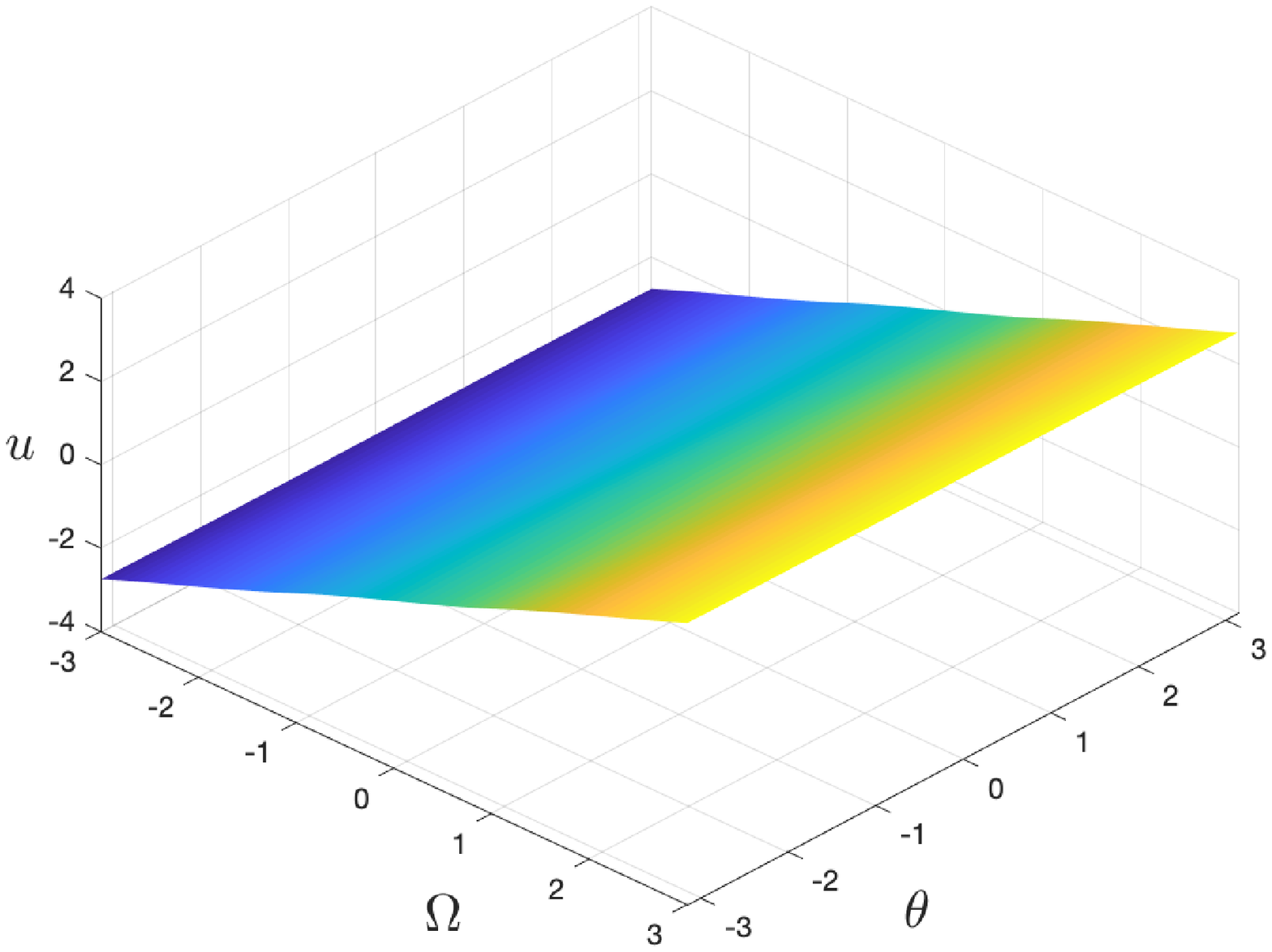}} 
\caption{Subcritical case}
\label{fig_nidsub}
\end{figure}

\begin{figure}[ht]
\centering
\subfigure[Density at $t=0$]{  
\includegraphics[width=2.8in,height=1.5in]{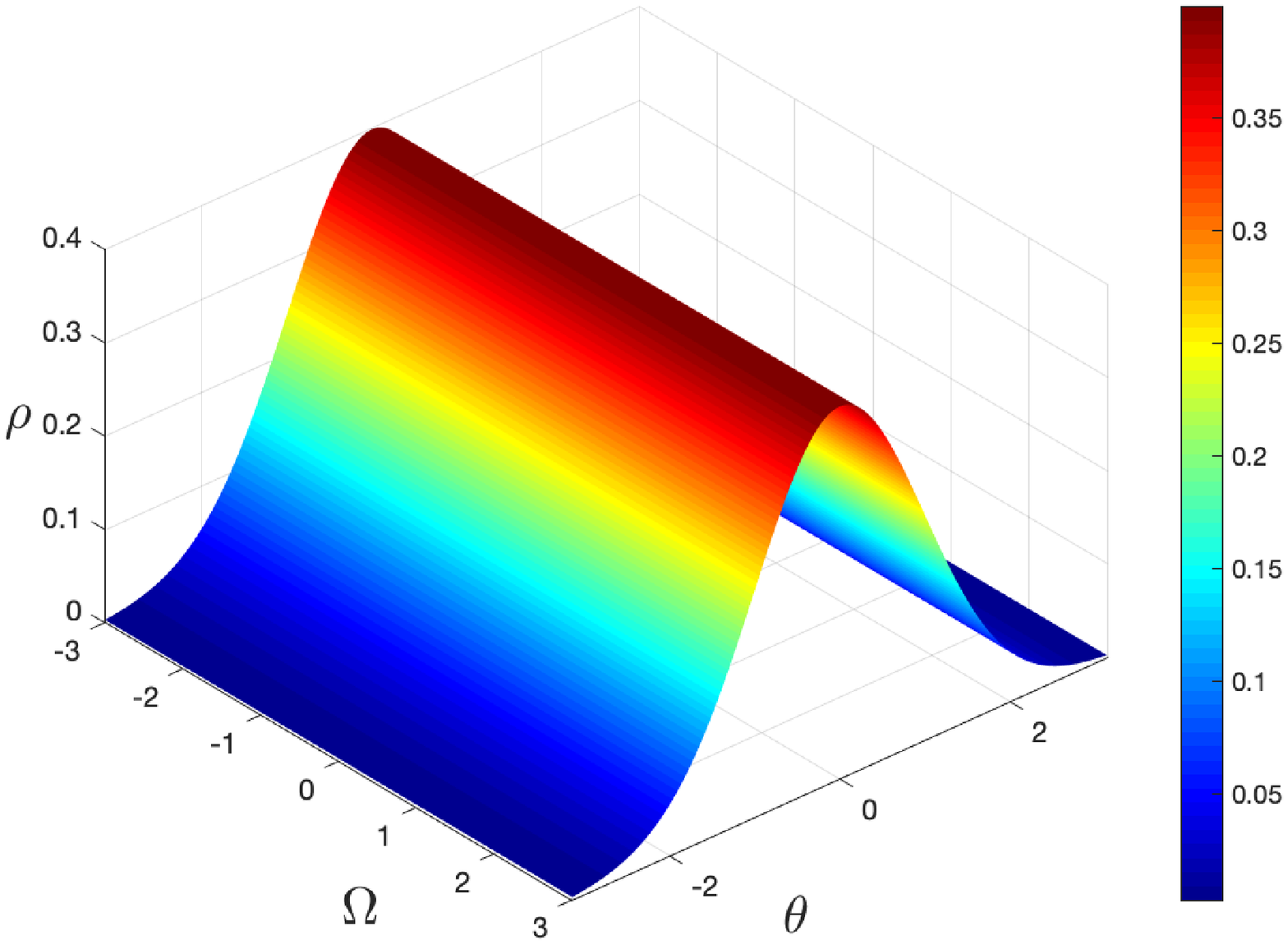}} 
\subfigure[Velocity at $t=0$]{  
\includegraphics[width=2.8in,height=1.5in]{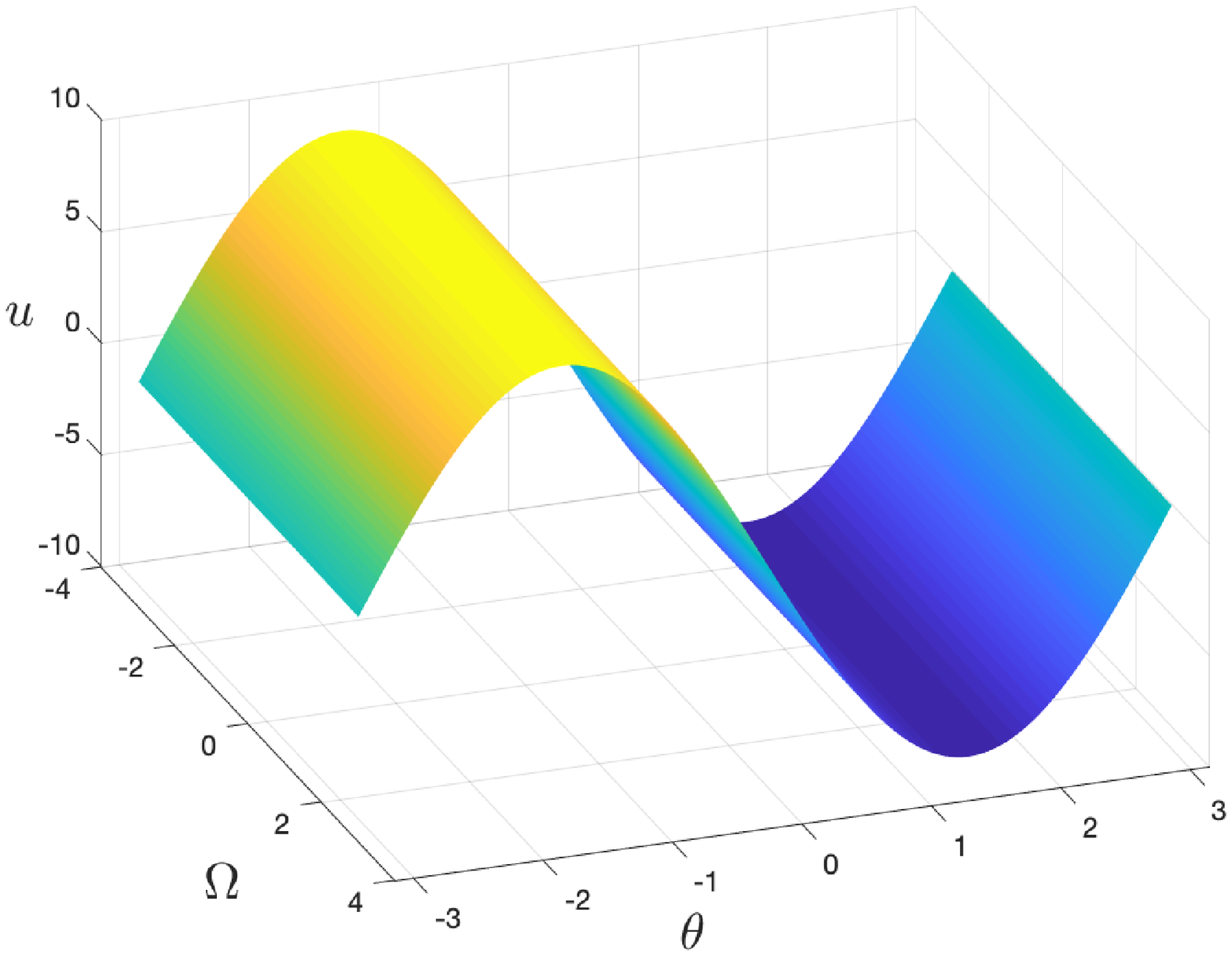}} 
\subfigure[Density at $t=0.08$] { 
\includegraphics[width=2.8in,height=1.5in]{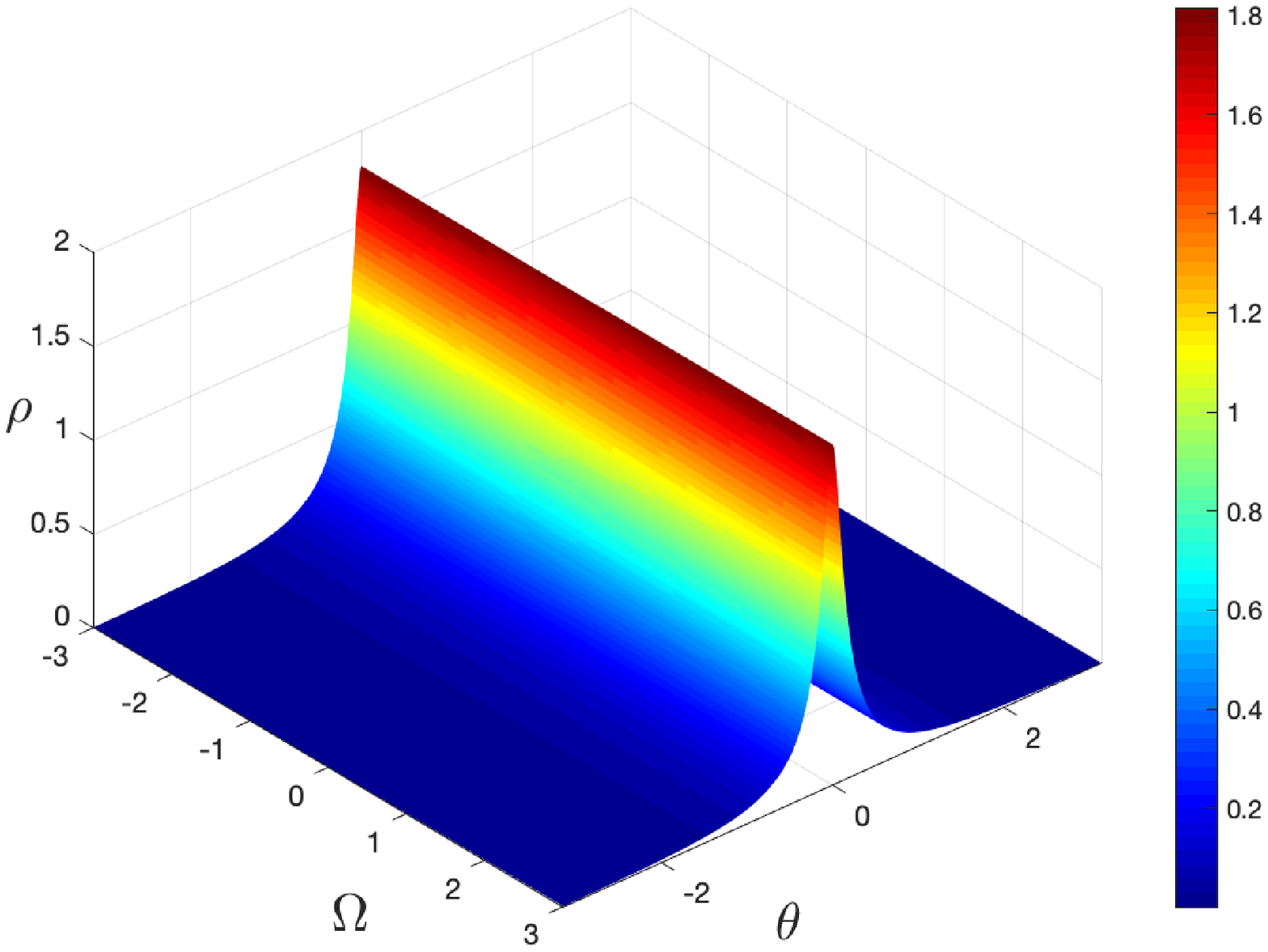}} 
\subfigure[Velocity at $t=0.08$]{  
\includegraphics[width=2.8in,height=1.5in]{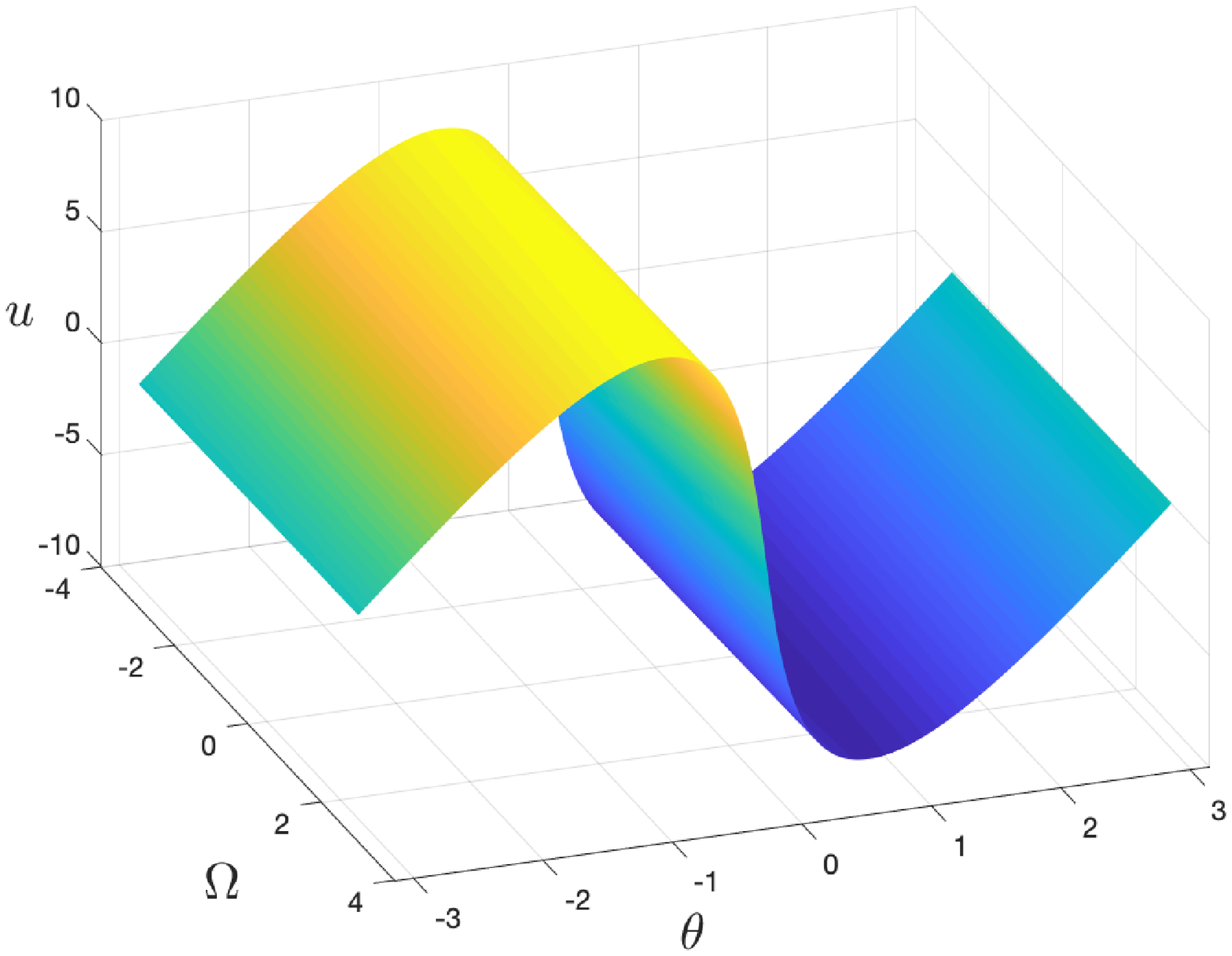}} 
\subfigure[Density at $t=0.1$] {  
\includegraphics[width=2.8in,height=1.5in]{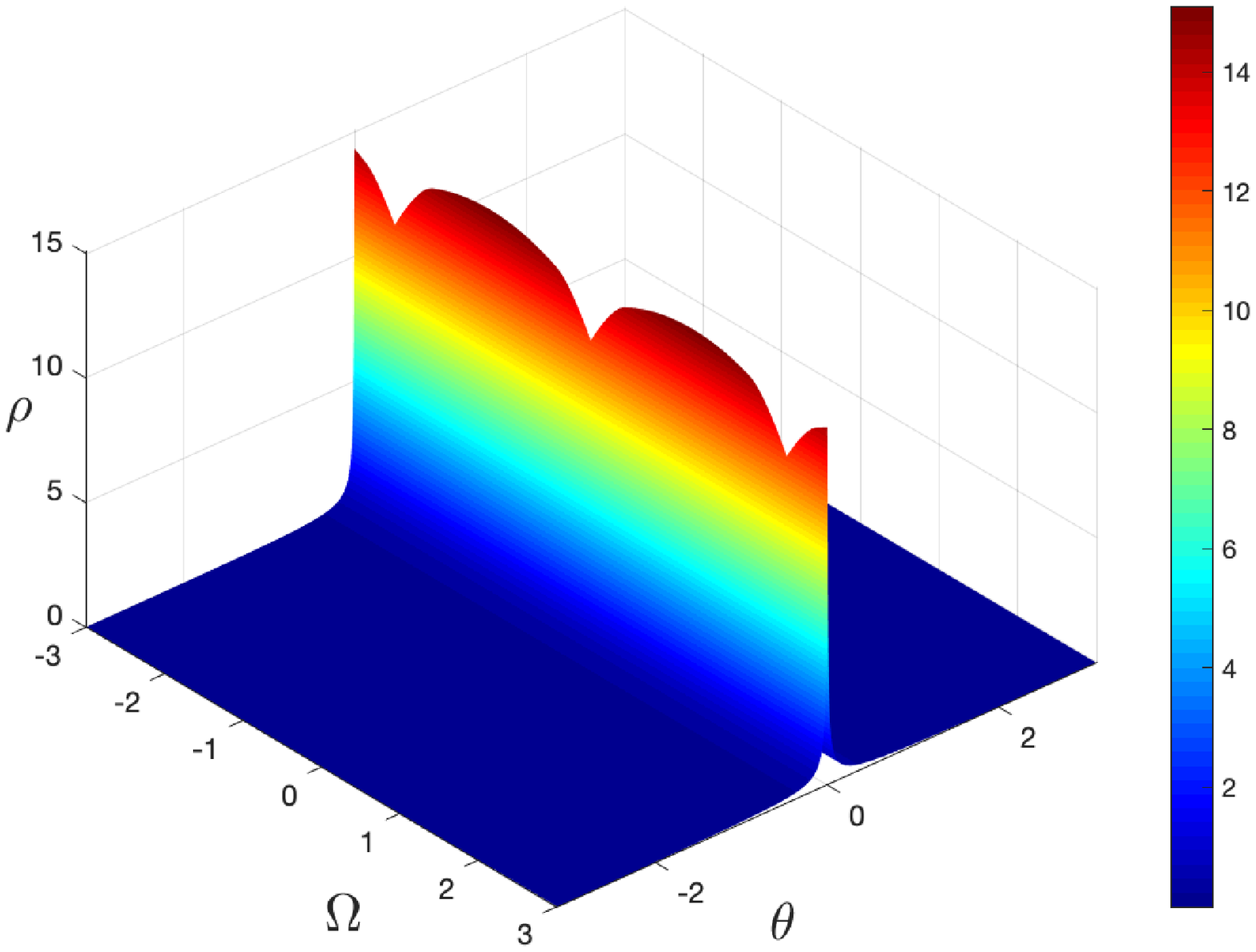}} 
\subfigure[Velocity at $t=0.1$]{  
\includegraphics[width=2.8in,height=1.5in]{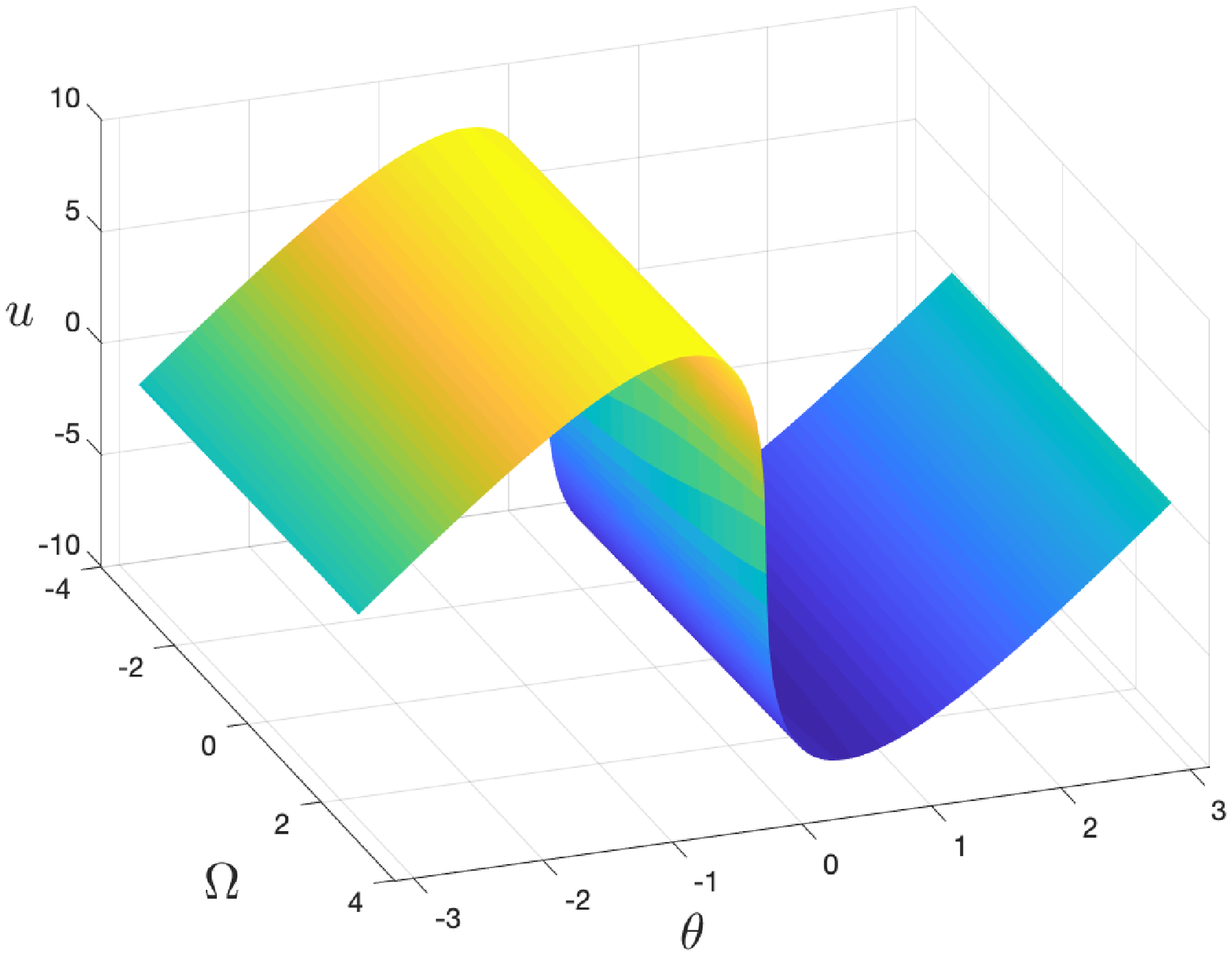}} 
\subfigure[Density at $t=0.2$] { 
\includegraphics[width=2.8in,height=1.5in]{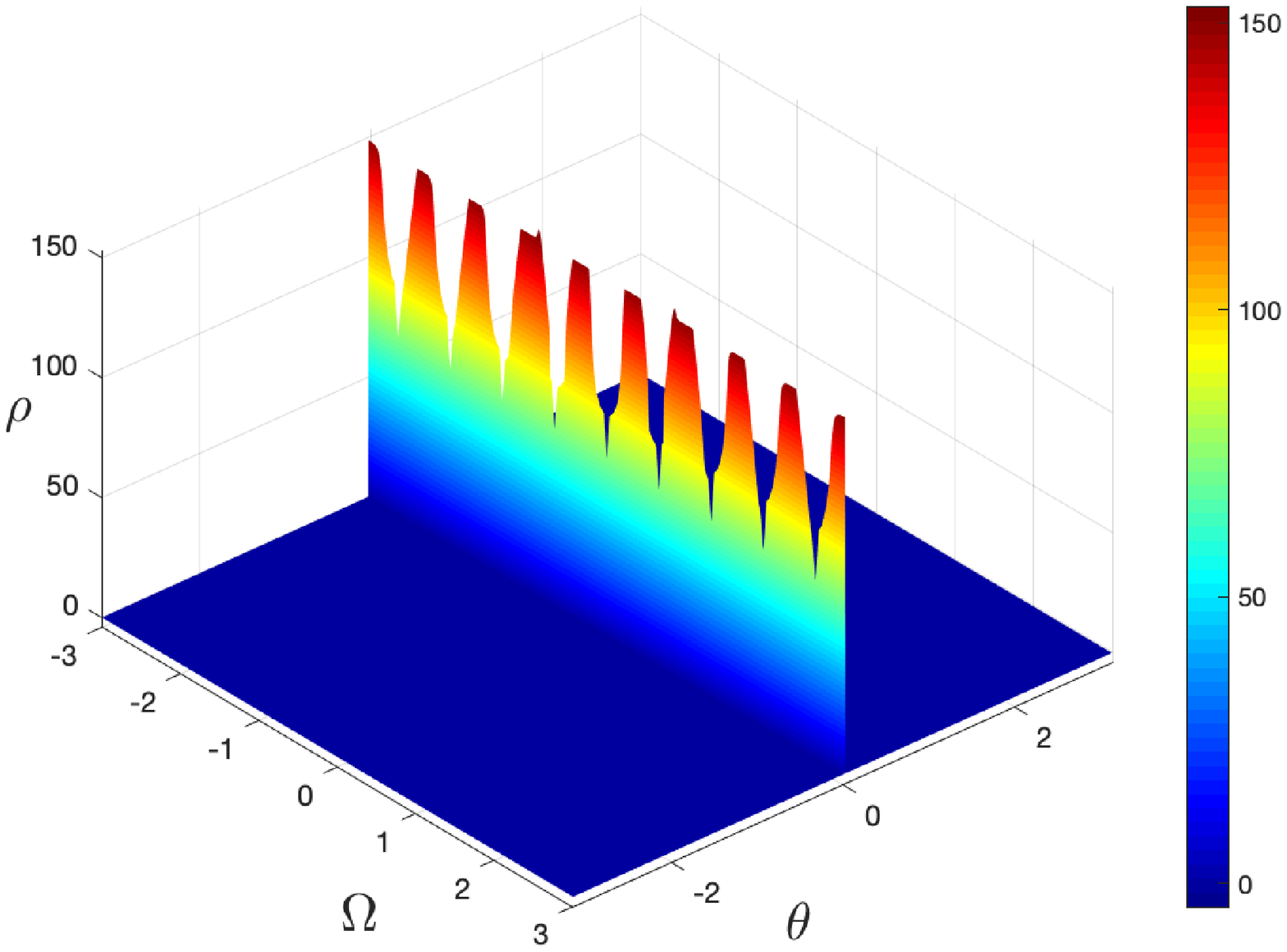}} 
\subfigure[Velocity at $t=0.2$]{  
\includegraphics[width=2.8in,height=1.5in]{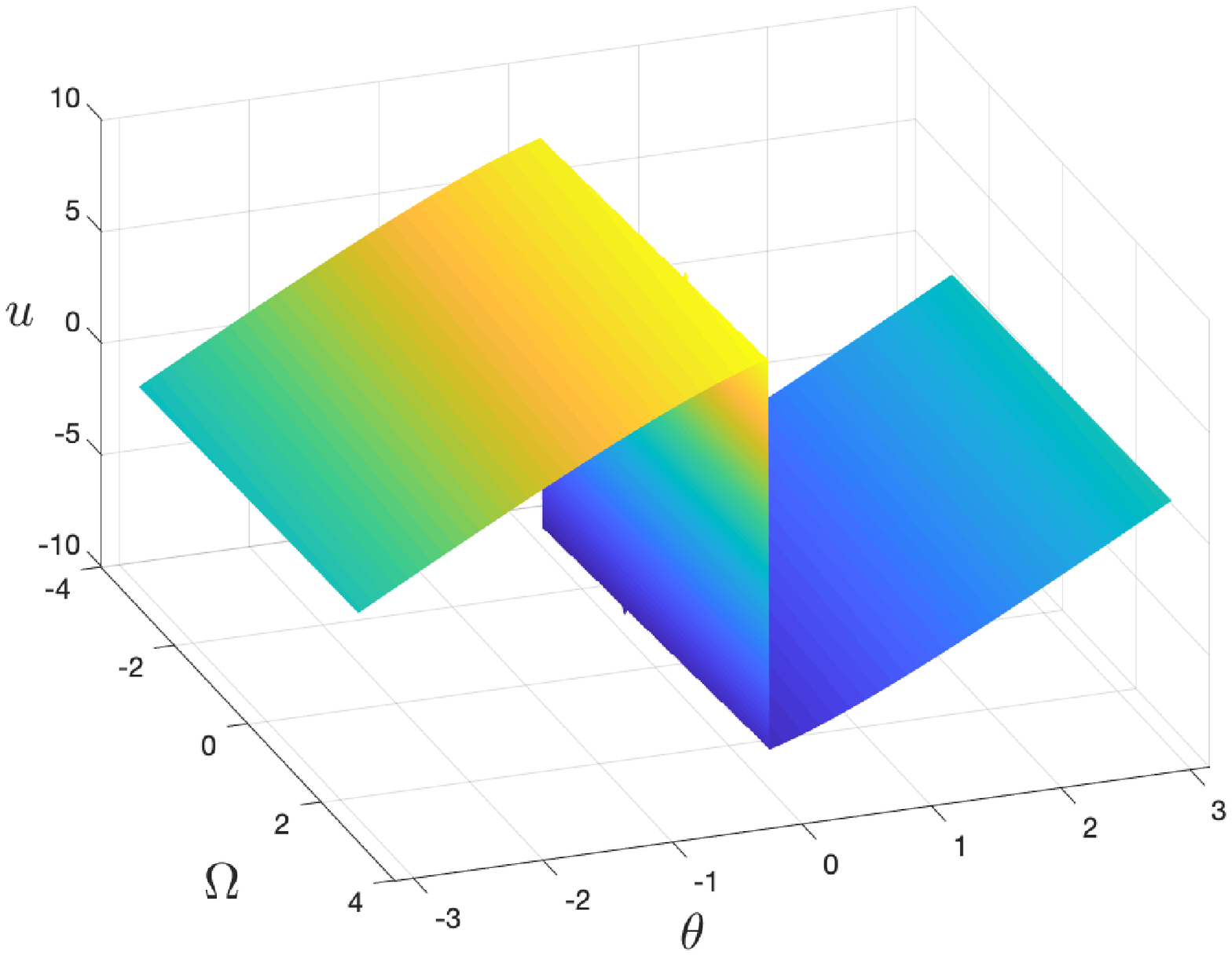}} 
\caption{Supercritical case 1}
\label{fig_nidsup}
\end{figure}

\begin{figure}[ht]
\centering
\subfigure[Density at $t=0$]{ 
\includegraphics[width=2.8in,height=1.5in]{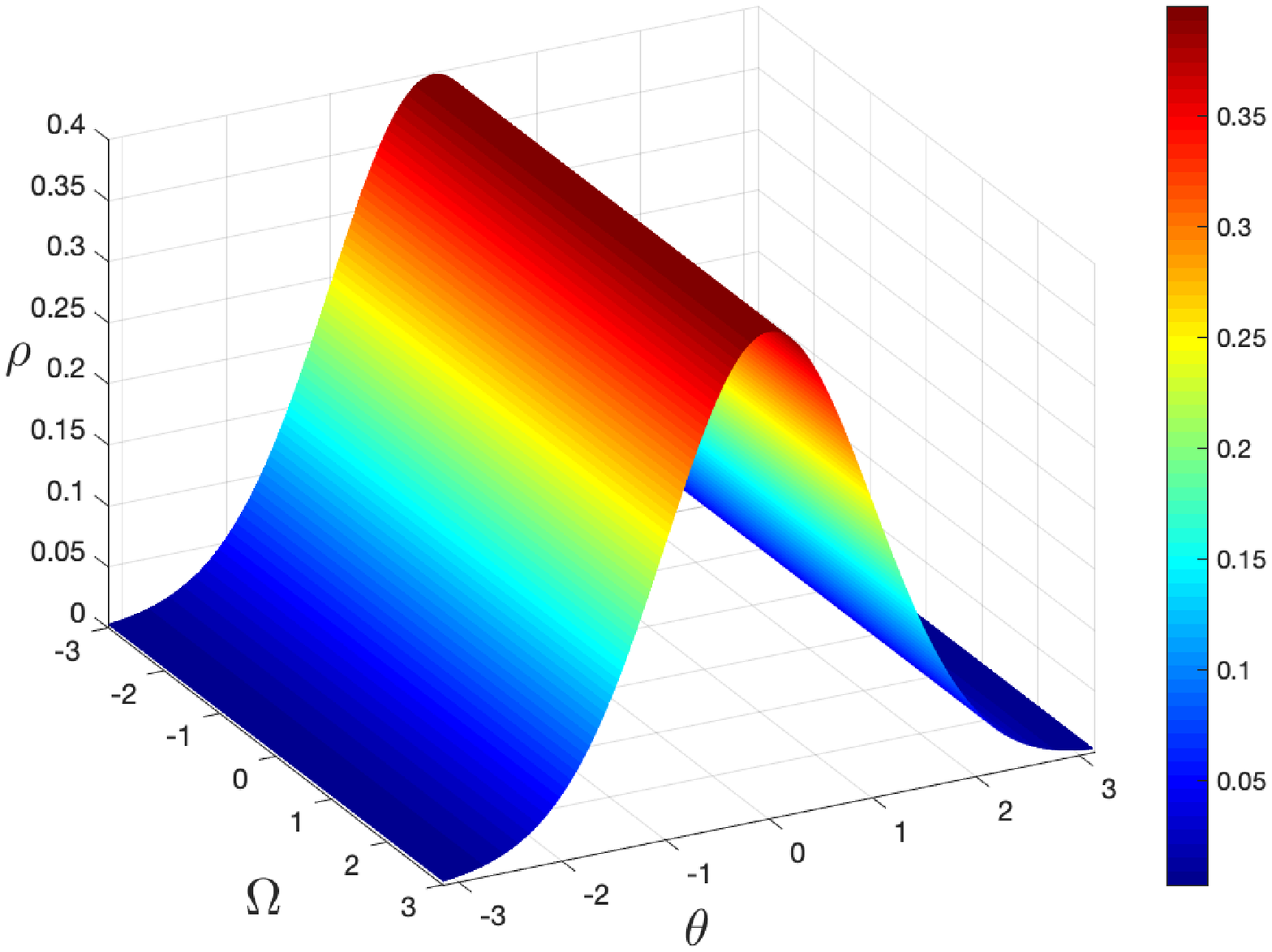}} 
\subfigure[Velocity at $t=0$]{  
\includegraphics[width=2.8in,height=1.5in]{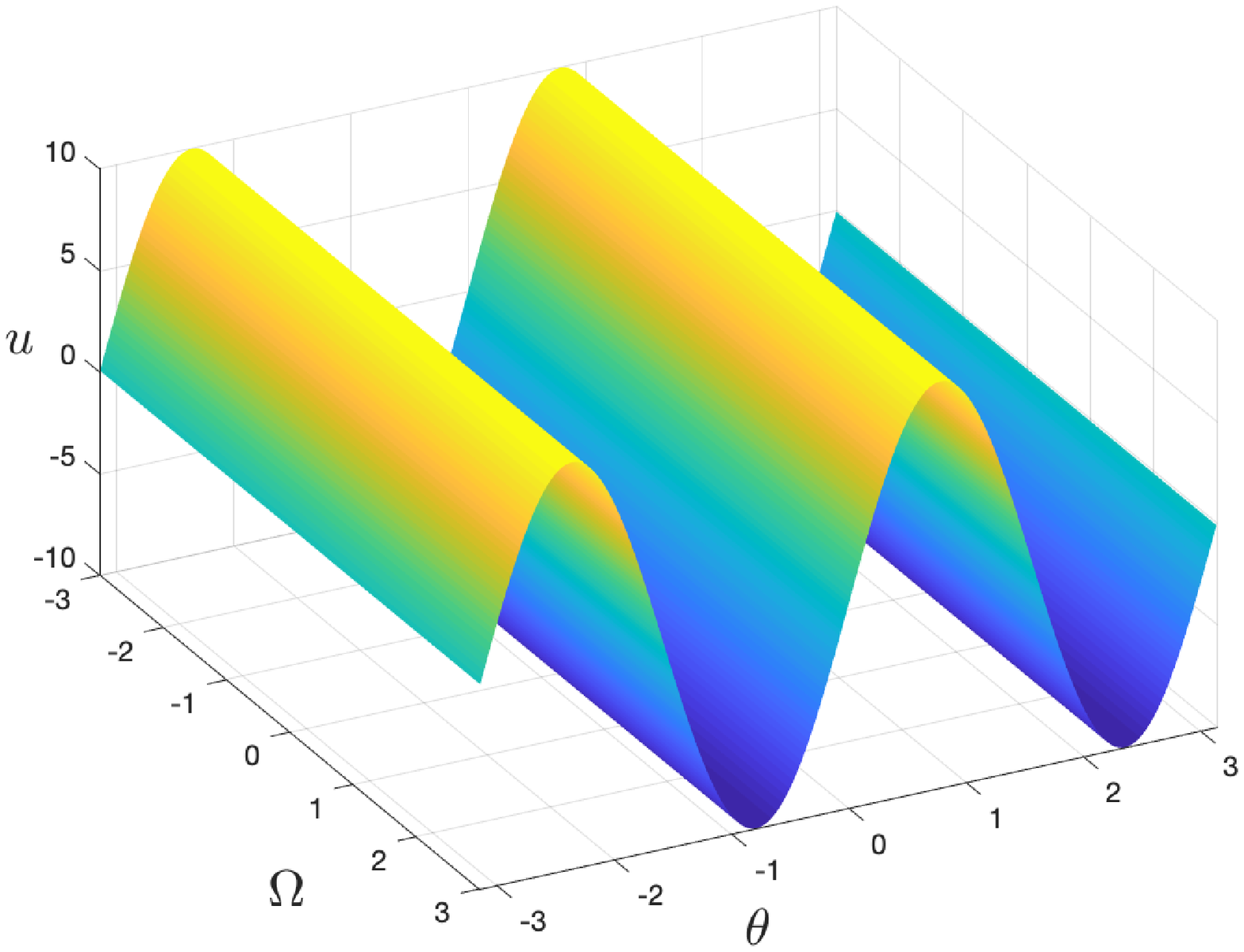}} 
\subfigure[Density at $t=0.02$] {  
\includegraphics[width=2.8in,height=1.5in]{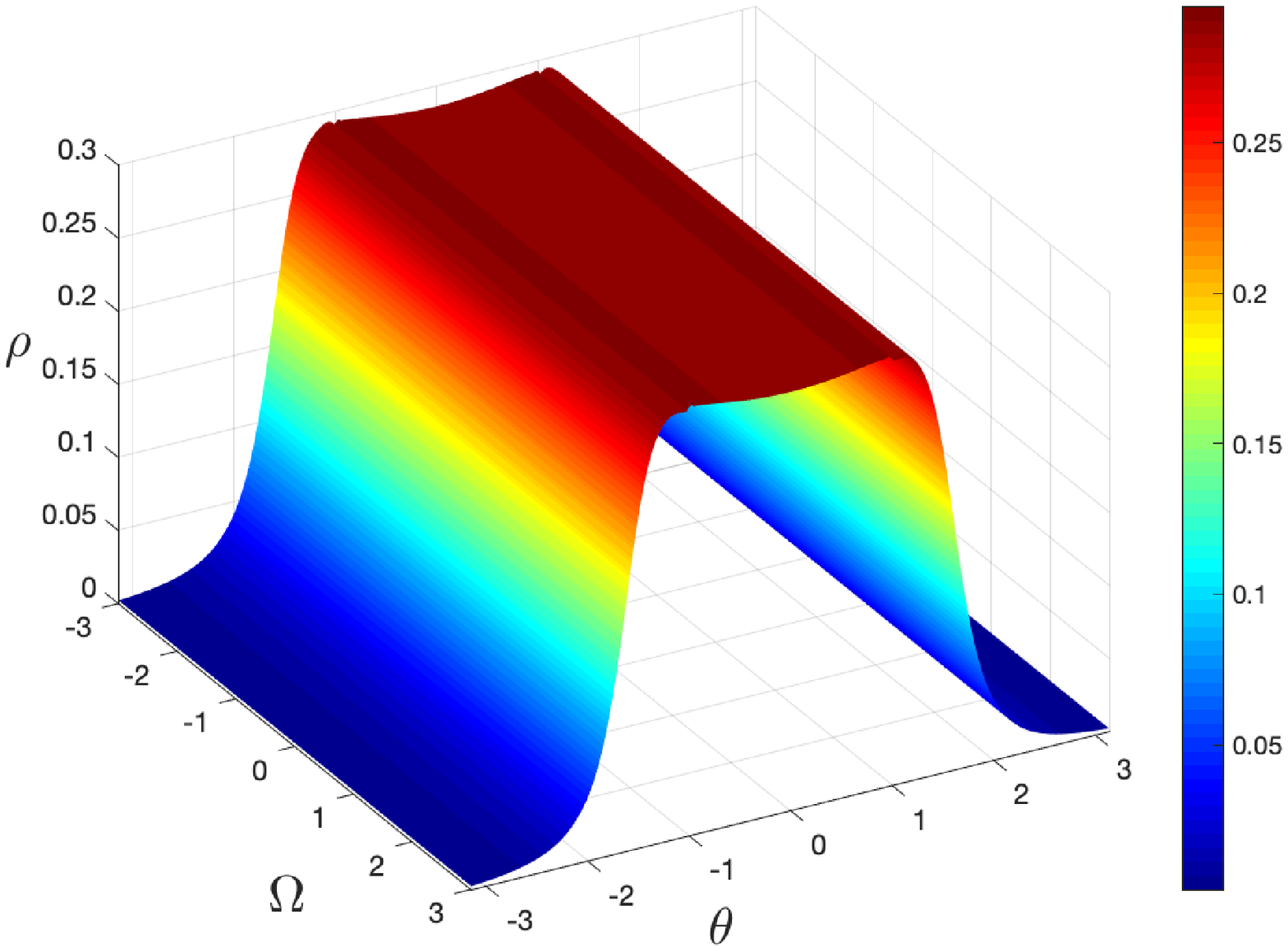}} 
\subfigure[Velocity at $t=0.02$]{  
\includegraphics[width=2.8in,height=1.5in]{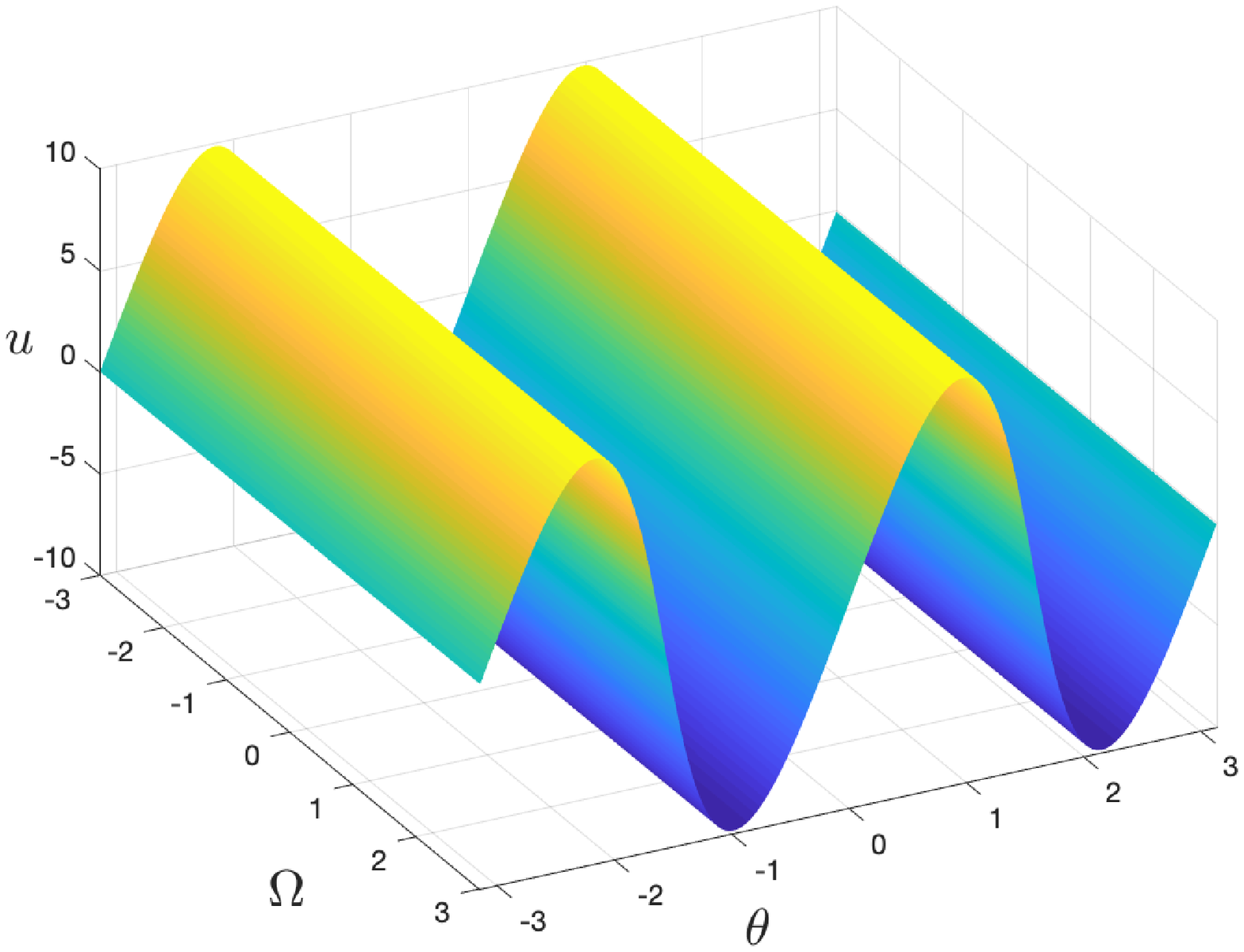}} 
\subfigure[Density at $t=0.04$] { 
\includegraphics[width=2.8in,height=1.5in]{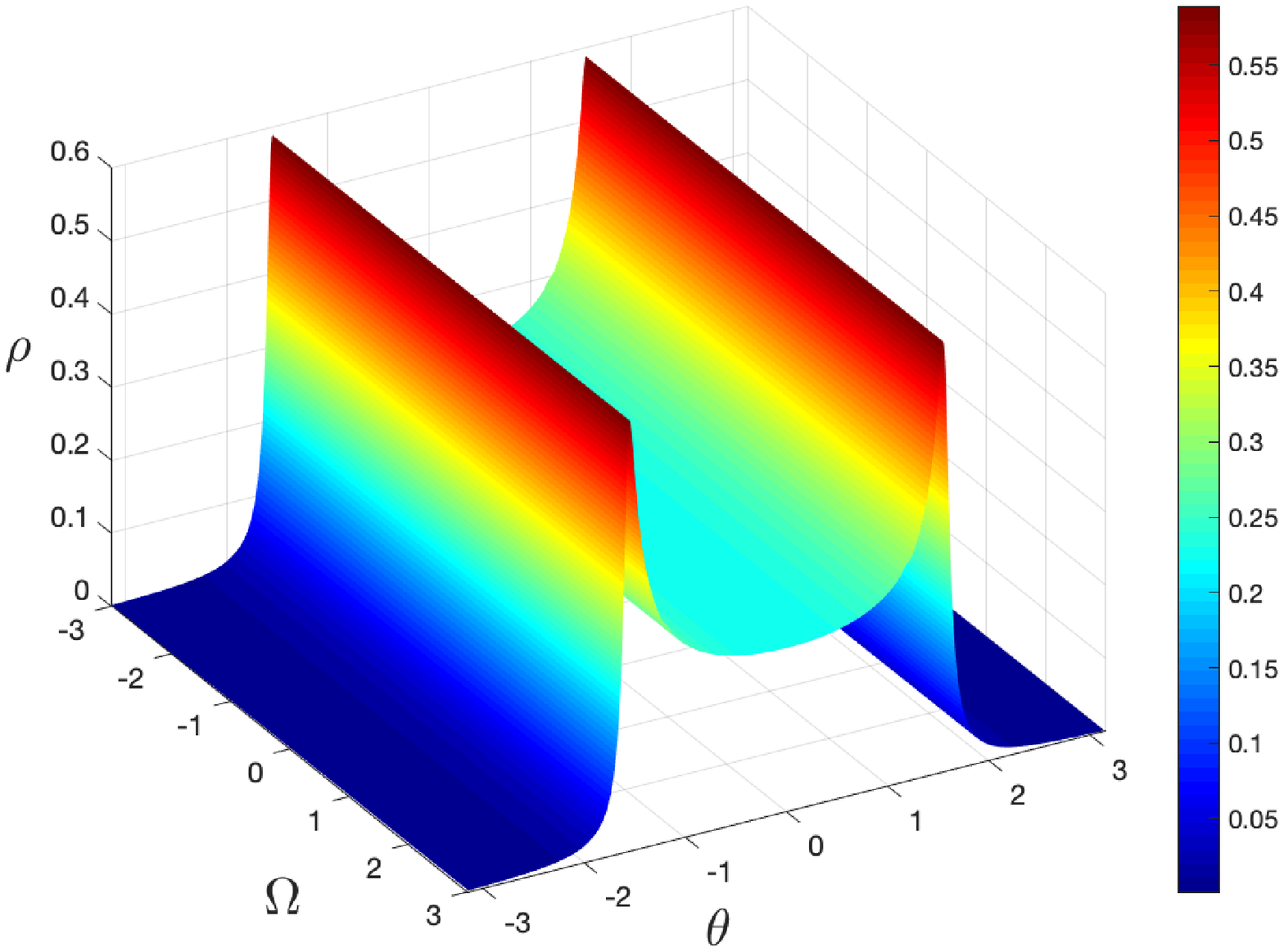}} 
\subfigure[Velocity at $t=0.04$]{  
\includegraphics[width=2.8in,height=1.5in]{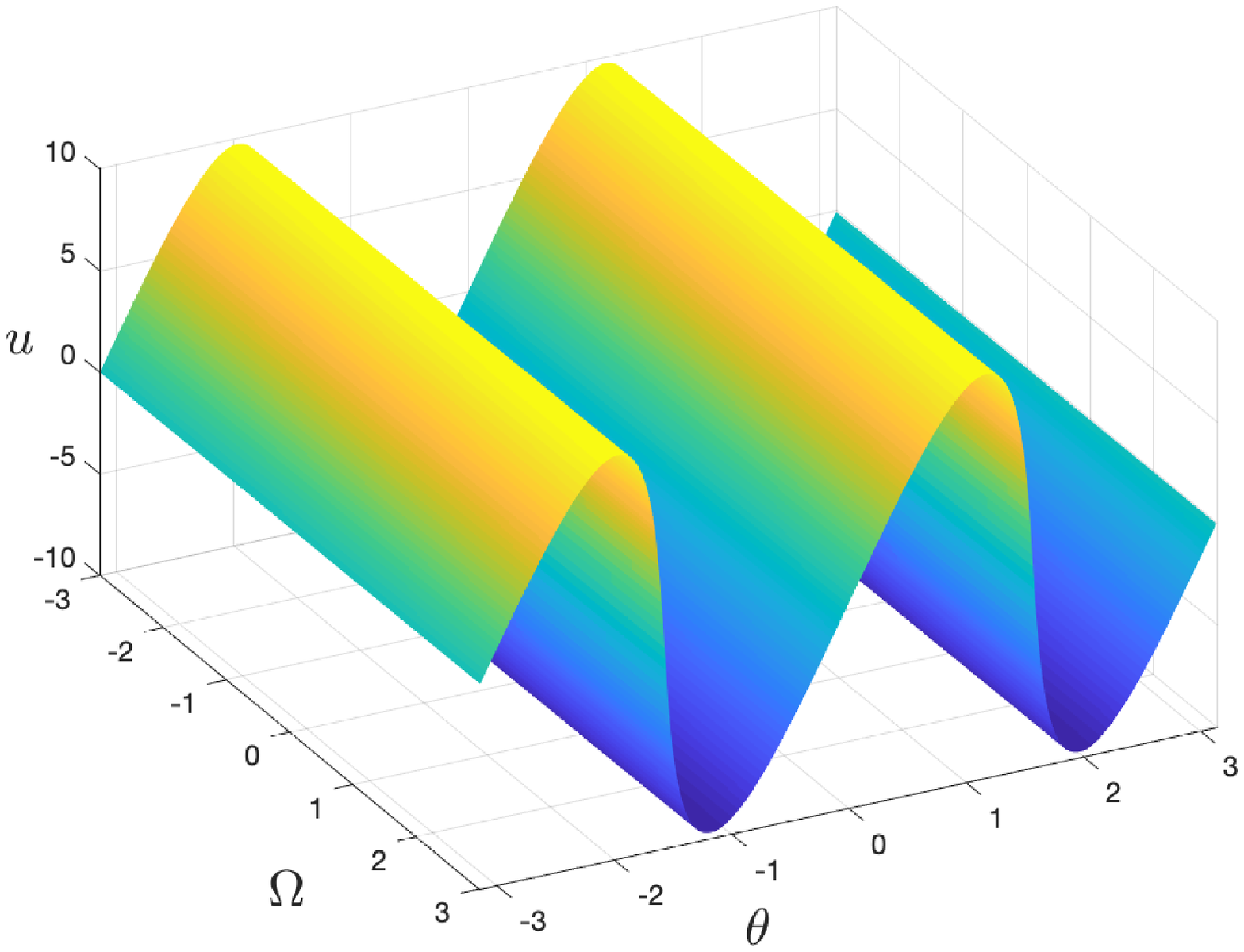}} 
\subfigure[Density at $t=0.1$] { 
\includegraphics[width=2.8in,height=1.5in]{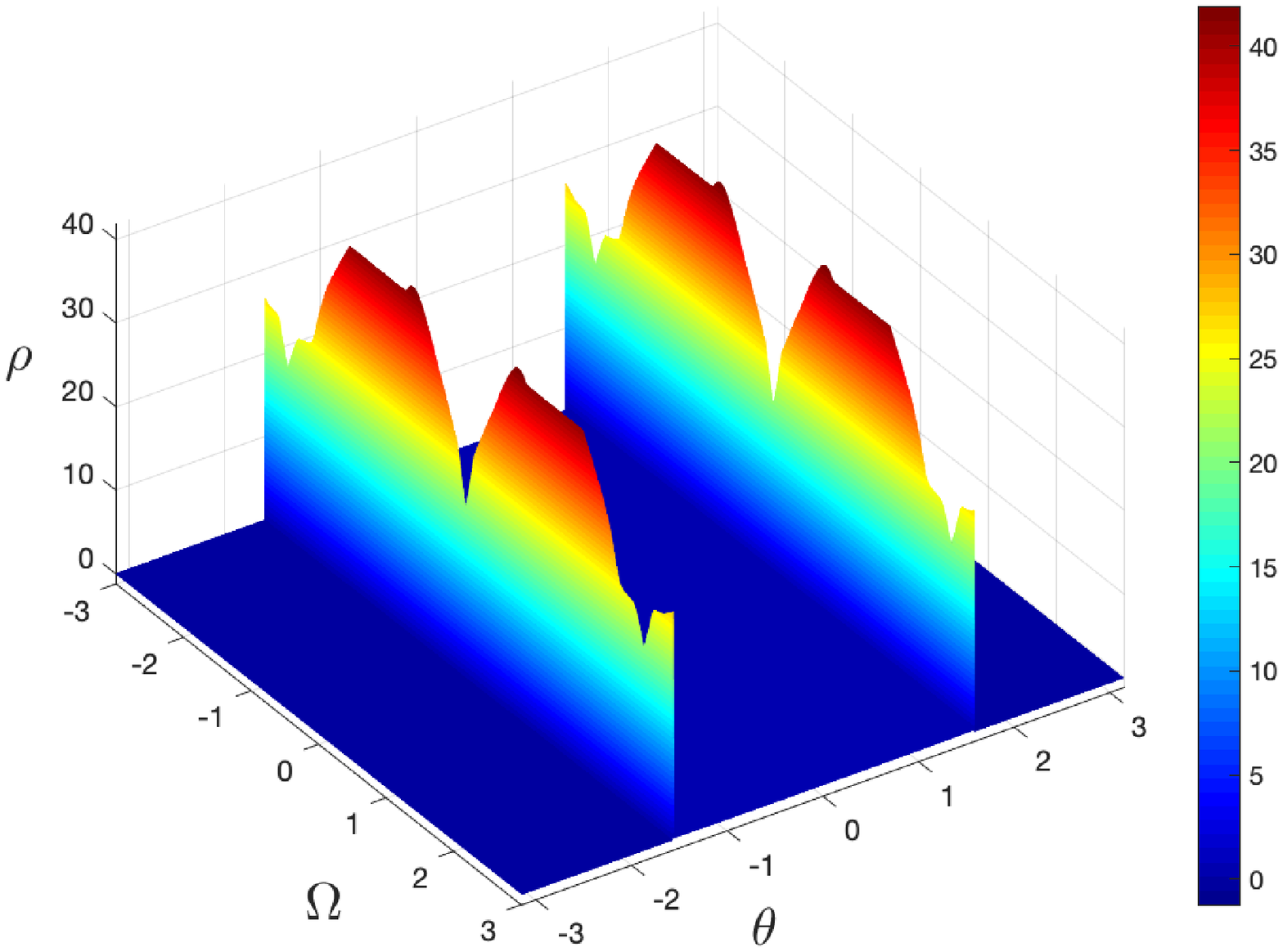}} 
\subfigure[Velocity at $t=0.1$]{  
\includegraphics[width=2.8in,height=1.5in]{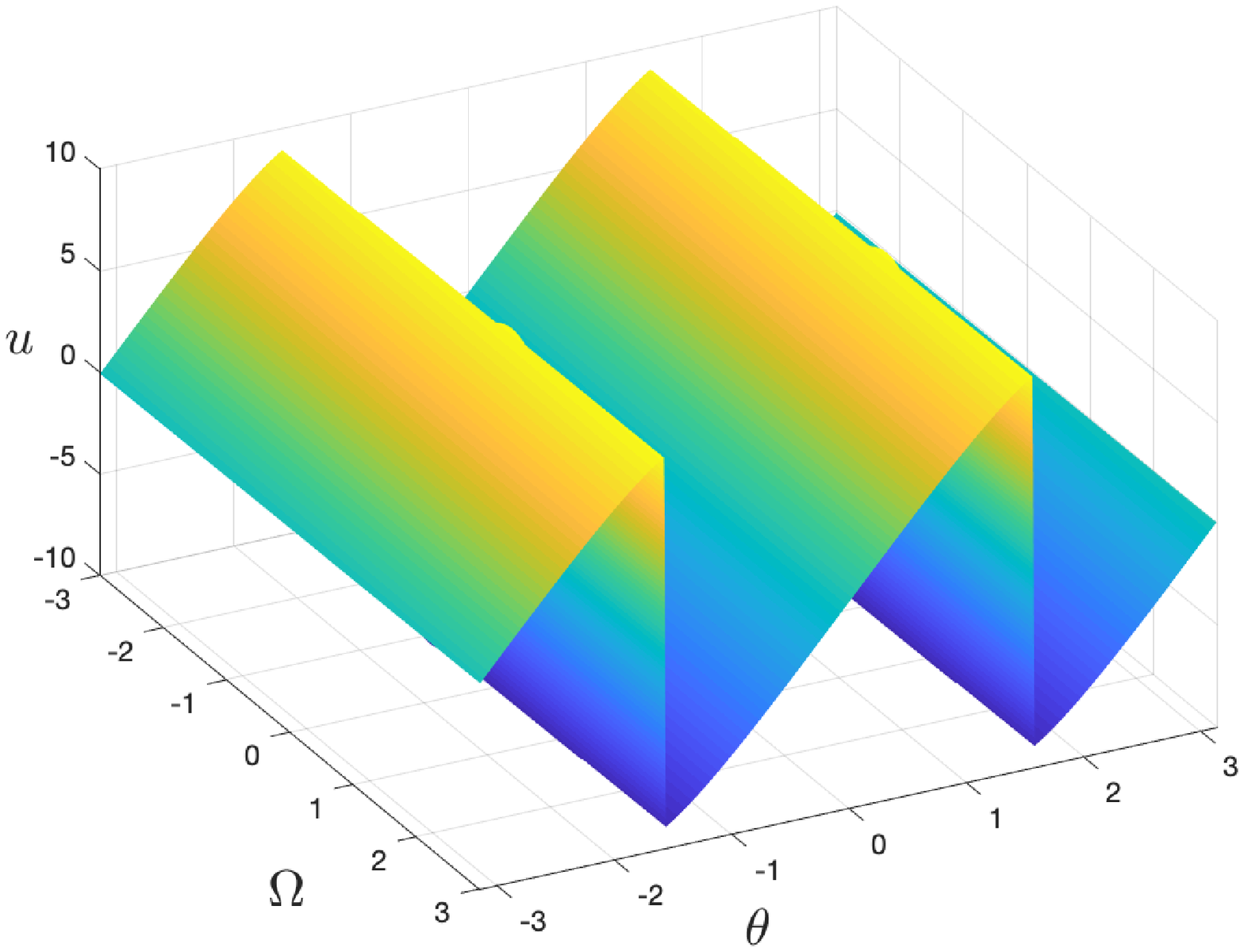}} 
\caption{Supercritical case 2}
\label{fig_nidsup2}
\end{figure}

Figures \ref{fig_nid} (a), (b) and Figure \ref{fig_nidsub} illustrate the profiles for the subcritical cases in the two-dimensional and three-dimensional plots, respectively. In Figures \ref{fig_nid} (a) and (b), we plot modified density $\tilde \rho$ and velocity $\tilde u$ which are given by 
\[
\tilde \rho(\theta,t) := \int_\R \rho(\theta,\Omega,t)g(\Omega)\,d\Omega \quad \mbox{and} \quad \tilde u(\theta,t) := \int_\R u(\theta,\Omega,t)g(\Omega)\,d\Omega
\]
so that the profiles can be observed on $\T$. The parameters and the initial velocity are set
\[
m=2, \quad K=0.1, \quad \mbox{and} \quad  u_0(\theta,\Omega) = -(0.1)\sin \theta,
\]
which guarantees the subcritical region for all $(\theta,\Omega)$ so that we have the global existence of the solution, see Theorem \ref{thm_cri} (i). We can observe that the density does not tend to a Dirac measure unlike identical case.  Note that for each fixed $\Omega$, the density profile $\rho(\theta,\Omega)$ exhibits the bell-shaped distribution whose center varies according to the natural frequency $\Omega$. The frequency synchronization is also observed in the time evolution of $u$. 

The plots of $\tilde \rho, \rho, \tilde u, u$ for the first supercritical case are provided in Figures \ref{fig_nid} (c), (d) and Figure \ref{fig_nidsup} where the parameters and the initial data are chosen as
\[
m=2, \quad K=0.1, \quad \mbox{and} \quad  u_0(\theta,\Omega) = -10\sin \theta.
\]
In this case, the initial data lie in the supercritical region around $\theta=0$. The finite-time blow-up of $\rho$ and $\pa_\theta u$ in the small time interval is easily observed here, which is consistent with our theoretical results Proposition \ref{prop_cri} and Remark \ref{rem_density}. 

In Figures \ref{fig_nid} (e), (f) and Figure \ref{fig_nidsup2}, we present the plots of the density and velocity profiles for the second supercritical case.  As is the case in the identical oscillators(Figures \ref{fig_id} (e) and (f)), we take the following parameters and the initial velocity:
\[
m=2, \quad K=0.1, \quad \mbox{and} \quad  u_0(\theta, \Omega) = 10\sin 2\theta.
\]
to consider the supercritical case around $\theta = \pm \pi/2$, and the figures also exhibit the finite-time blow-up around these points.

\subsection{Phase transitions \& hysteresis phenomena} 
In Figure \ref{fig_hyst},  we show the phase transitions of the order parameter $r^\infty$ for the hydrodynamic model \eqref{hydro_Ku} with the coupling strength on the interval $[0,4]$. It is known that unlike the Kuramoto model without inertia, where the phase transition of the order parameter versus the coupling strength is continuous provided the distribution function $g$ is Gaussian, even the small inertia can lead to a discontinuous and hysteretic phase transition for the system \eqref{particle_Ku}. As noted in Introduction, the hydrodynamic model \eqref{hydro_Ku} also carries the hysteresis phenomena as \eqref{particle_Ku} does. Figure \ref{fig_hyst} exhibits the discontinuous phase transition of \eqref{hydro_Ku} with $m=0.1, 0.5, 1$. The numbers of $\theta$-grid and $\Omega$-grid are set to $100$ and $600$, respectively. We increase $K$ from $0$ to $4$ with the mesh spacing of $0.1$ for $K$. When $K$ reaches $4$, the same procedure is iterated by decreasing $K$ back to $0$. In order to gain the clear observation on the thresholds, the finer mesh spacing in $K$, which is $0.05$, is used around them. The direction of jump is indicated with arrows. The initial density $\rho_0$ and the distribution $g$ are set to the Gaussian distribution and the initial velocity is $u_0(\theta, \Omega) = -(0.5)\sin \theta$. Note that $u_0$ is chosen such that it does not lie in the supercritical region for any of $m \in \{0.1,0.5,1\}$ and $K \in [0,4]$, see Section \ref{sec_cri}. We can observe that as $m$ increases, the hysteresis becomes more noticeable.

\begin{figure}[ht] 
\centering
\includegraphics[width=5.0in]{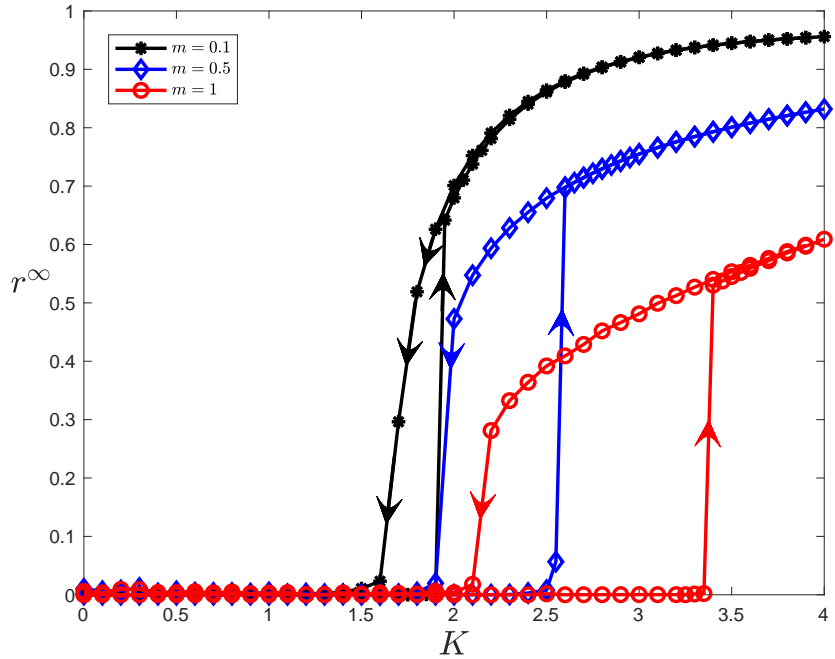}
\caption{Phase transitions \& hysteresis phenomena} 
\label{fig_hyst}
\end{figure}

\section{Conclusion}
In this manuscript, we presented a new hydrodynamic model for the synchronization phenomena and discussed the local-in-time existence theory. For the identical natural frequencies, we provided two different approaches for the synchronization estimates; kinetic energy combined with the order parameter estimates and the second-order Gr\"onwall-type inequality estimates on the phase and velocity diameters. In particular, by the latter strategy, we showed that the limiting density is the form of the Dirac measure. We also analyzed the critical threshold phenomena in our main system. By this analysis, we found that classical solutions can be blow-up in finite time, which is not observed in the classical Kuramoto models. We were not able to prove this finite-time blow-up of solutions implies the finite-time synchronization, however, numerical simulations illustrated that the density with initial data in the supercritical region becomes Dirac measures in finite time. We also presented several numerical simulations validating our analytical results. The numerical results showed that our main system also has similar features, such as phase transitions and hysteresis phenomena, compared to the Kuramoto model with inertia. As briefly mentioned in Introduction, the pressureless Euler-type equations may develop the formation of singularities. For this reason, it is natural to take into account the notion of measure-valued solutions. Thus it would be interesting to study the existence of measure-valued solutions to our main system. This may enable us to have the global-in-time regularity of solutions. As the first step in this hydrodynamic modeling of synchronization phenomena, we only deal with the case of identical oscillators for the synchronization estimates. Hence, our next step is to generalize our analysis for the case of nonidentical natural frequencies. We will investigate these interesting issues in future.


\appendix


\section{Proof of Theorem \ref{thm_local2}} \label{app_local}

For computational simplicity, we set $m = K = 1$. Let $T>0$ be given, and we consider the system: 
\begin{align}\label{app_euler}
\begin{aligned}
&\pa_t \rho + \pa_\theta(\rho \bar u) = 0, \quad (\theta,\Omega) \in \T \times \R, \quad t> 0,\cr
&\rho \pa_t u +  \rho \bar u \pa_\theta u =   -\rho u +  \rho \Omega + \rho\int_{\T \times \R} \sin(\theta_* - \theta)\rho(\theta_*,\Omega_*,t)g(\Omega_*)\,d\theta_* d\Omega_*,
\end{aligned}
\end{align}
with the initial data $(\rho_0, u_0)$ satisfying the assumptions \eqref{asp_thm2}. Here $\bar u$ satisfies
\bq\label{asp_bu}
\sup_{0 \leq t \leq T} \lt(\|\bar u(\cdot,\cdot,t)\|_{H^3_g} + \|\pa_\theta \bar u(\cdot,\cdot,t)\|_{L^\infty} \rt) < M.
\eq
Note that we can use a standard linear theory to show the existence of solutions to the system \eqref{app_euler}. We begin by estimating $\|\rho\|_{H^2_g}$. A direct calculation gives
$$\begin{aligned}
\frac12\frac{d}{dt}\int_{\T \times \R} \rho^2 g\,d\theta d\Omega &= \int_{\T \times \R} \rho \bar u(\pa_\theta \rho)  g\,d\theta d\Omega \cr
&=-\frac12\int_{\T \times \R} \rho^2 (\pa_\theta \bar u) g\,d\theta d\Omega \cr
&\leq \|\pa_\theta \bar u\|_{L^\infty} \|\rho\|_{L^2_g}^2.
\end{aligned}$$
Similarly, we can easily obtain
\[
\frac{d}{dt} \int_{\T \times \R} (\pa_\theta \rho)^2 g\,d\theta d\Omega \ls  \|\pa_\theta \bar u\|_{L^\infty} \|\pa_\theta \rho\|_{L^2_g}^2 + \|\rho\|_{L^\infty}\|\pa_\theta \rho\|_{L^2_g} \|\pa_\theta^2 u\|_{L^2_g}.
\]
We next estimate
\begin{align}\label{est_rho2}
\begin{aligned}
&\frac12\frac{d}{dt} \int_{\T \times \R} (\pa_\theta^2 \rho)^2 g\,d\theta d\Omega \cr
&\quad = -\int_{\T \times \R} \pa_\theta^2 \rho \lt(\bar u\pa_\theta^3 \rho  + 3 \pa_\theta^2 \rho \pa_\theta \bar u + 3 \pa_\theta \rho \pa_\theta^2 \bar u + \rho \pa_\theta^3 \bar u \rt) g \,d\theta d\Omega\cr
&\quad \ls  \|\pa_\theta \bar u\|_{L^\infty} \|\pa_\theta^2 \rho\|_{L^2_g}^2 + \|\rho\|_{L^\infty}\|\pa_\theta^2 \rho\|_{L^2_g} \|\pa_\theta^3 \bar u\|_{L^2_g} \cr
&\qquad + \int_{\T \times \R} (\pa_\theta \rho)^2 (\pa_\theta^3 \bar u) g\,d\theta d\Omega.
\end{aligned}
\end{align}
Note that
\[
0 = \int_{\T \times \R} \pa_\theta (\rho (\pa_\theta \rho)^3)g\,d\theta d\Omega = \int_{\T \times \R} \lt((\pa_\theta \rho)^4 + 3\rho(\pa_\theta \rho)^2 \pa_\theta^2 \rho \rt)g\,d\theta d\Omega,
\]
then this together with applying H\"older's inequality yields
$$\begin{aligned}
\int_{\T \times \R} (\pa_\theta \rho)^4 g\,d\theta d\Omega &= -3\int_{\T \times \R}  \rho(\pa_\theta \rho)^2 (\pa_\theta^2 \rho)g\,d\theta d\Omega \cr
&\leq 3\|\rho\|_{L^\infty}\lt(\int_{\T \times \R} (\pa_\theta \rho)^4 g\,d\theta d\Omega \rt)^{1/2} \lt(\int_{\T \times \R} (\pa_\theta^2 \rho)^2 g\,d\theta d\Omega \rt)^{1/2}.
\end{aligned}$$
Thus we get
\[
\|\pa_\theta \rho\|_{L^4_g}^2 \leq 3\|\rho\|_{L^\infty}\|\pa_\theta^2 \rho\|_{L^2_g}.
\]
Using the above inequality, we estimate the last term on the right hand side of \eqref{est_rho2} as 
\[
\int_{\T \times \R} (\pa_\theta \rho)^2 (\pa_\theta^3 \bar u) g\,d\theta d\Omega \leq \|\pa_\theta \rho\|_{L^4_g}^2\|\pa_\theta^3 \bar u\|_{L^2_g} \leq 3\|\rho\|_{L^\infty}\|\pa_\theta^2 \rho\|_{L^2_g}\|\pa_\theta^3 \bar u\|_{L^2_g}.
\]
We now combine all of the above observations to have
\[
\frac{d}{dt} \|\rho\|_{H^2_g}^2 \leq C\|\pa_\theta \bar u\|_{L^\infty}\|\rho\|_{H^2_g}^2 + C\|\rho\|_{L^\infty}\|\pa_\theta \rho\|_{H^1_g}\|\pa_\theta \bar u\|_{H^2_g},
\]
that is,
\begin{align}\label{est_rho3}
\begin{aligned}
\frac{d}{dt} \|\rho\|_{H^2_g} &\leq C\|\pa_\theta \bar u\|_{L^\infty}\|\rho\|_{H^2_g} + C\|\rho\|_{L^\infty}\|\pa_\theta \bar u\|_{H^2_g}\cr
&\leq CM\lt(\|\rho\|_{L^\infty} + \|\rho\|_{H^2_g}\rt).
\end{aligned}
\end{align}
On the other hand, by taking into account the characteristic flow defined by
\bq\label{app_char}
\pa_t \bar \eta(\theta,\Omega,t) = \bar u(\bar \eta(\theta,\Omega,t),\Omega,t) \quad \mbox{with} \quad \bar \eta(\theta,\Omega,0) = \theta,
\eq
we can easily estimate
\bq \label{est_rhoinf}
\|\rho(\cdot,\cdot,t)\|_{L^\infty} \leq \|\rho_0\|_{L^\infty} \exp\lt(\int_0^t \|\pa_\theta \bar u(\cdot,\cdot,s)\|_{L^\infty}\,ds \rt) \leq \|\rho_0\|_{L^\infty} e^{MT},
\eq
for $t \in [0,T]$. This together with \eqref{est_rho3} asserts
\[
\frac{d}{dt} \|\rho(\cdot,\cdot,t)\|_{H^2_g} \leq CM\lt(\|\rho_0\|_{L^\infty} e^{MT} + \|\rho\|_{H^2_g}\rt).
\]
Applying Gr\"onwall's lemma to the inequality above, we have
\bq\label{est_rhoh2}
\|\rho(\cdot,\cdot,t)\|_{H^2_g} \leq \|\rho_0\|_{H^2_g} e^{CMT} + \|\rho_0\|_{L^\infty} e^{MT}\lt(e^{CMT} - 1 \rt),
\eq
for $t \in [0,T]$. 

We next estimate $\|\pa_\theta u\|_{L^\infty}$ and $\|u\|_{H^3_g}$. For this, similarly as before, we can use the characteristic flow to find
\[
\rho(\theta,\Omega,t)  \geq \inf_{(\theta,\Omega) \in \T \times \R}\rho_0(\theta,\Omega) e^{-MT} > 0,
\]
for all $(\theta,\Omega) \in \T \times \R$. This enables us to divide the momentum equation in \eqref{app_euler} by $\rho$, and this gives
\[
\pa_t u + \bar u \pa_\theta u = - u +  \Omega + \int_{\T \times \R} \sin(\theta_* - \theta)\rho(\theta_*,\Omega_*,t)g(\Omega_*)\,d\theta_* d\Omega_*.
\]
Along the characteristic flow \eqref{app_char}, we estimate
$$\begin{aligned}
\|\pa_\theta u(\cdot,\cdot,t)\|_{L^\infty}e^{t} &\leq \|\pa_\theta u_0\|_{L^\infty} + \int_0^t e^s\|\pa_\theta \bar u(\cdot,\cdot,t)\|_{L^\infty} \|\pa_\theta u(\cdot,\cdot,t)\|_{L^\infty} \,ds\cr
&\quad - \int_0^t e^s \lt(\int_{\T \times \R} \cos(\theta_* - \bar \eta(\theta,\Omega,t))\rho(\theta_*,\Omega_*,s)g(\Omega_*)\,d\theta_* d\Omega_*\rt) ds\cr
&\leq \|\pa_\theta u_0\|_{L^\infty} + M\int_0^t e^s\|\pa_\theta u(\cdot,\cdot,t)\|_{L^\infty} \,ds + e^t  - 1.
\end{aligned}$$
Thus we obtain
\bq\label{est_dtheta}
\|\pa_\theta u(\cdot,\cdot,t)\|_{L^\infty} \leq \|\pa_\theta u_0\|_{L^\infty} e^{CMT} + C(e^{CMT} - 1),
\eq
for $t \in [0,T]$, where $C>0$ is independent of $t$. For the estimate of $\|u\|_{H^3_g}$, we first notice that
\[
\lt|\pa_\theta^k \int_{\T \times \R} \sin(\theta_* - \theta) \rho(\theta_*,\Omega_*,t)g(\Omega_*)\,d\theta_* d\Omega_* \rt| \leq \int_{\T \times \R} \rho(\theta_*,\Omega_*,t)g(\Omega_*)\,d\theta_* d\Omega_* = 1,
\]
for $k \in \N$, due to Remark \ref{rmk_21}. This together with similar estimates as above yields
$$\begin{aligned}
\frac{d}{dt}\|u\|_{L^2_g}^2 &\ls \|\pa_\theta \bar u \|_{L^\infty}\|u\|_{L^2_g}^2 + \|u\|_{L^2_g},\cr
\frac{d}{dt}\|\pa_\theta u\|_{L^2_g}^2 &\ls \|\pa_\theta \bar u \|_{L^\infty}\|\pa_\theta u\|_{L^2_g}^2 + \|\pa_\theta u\|_{L^2_g},\cr
\frac{d}{dt}\|\pa_\theta^2 u\|_{L^2_g}^2 &\ls \|\pa_\theta u\|_{L^\infty}\|\pa_\theta^2 u\|_{L^2_g}\|\pa_\theta^2 \bar u\|_{L^2_g} + \|\pa_\theta \bar u \|_{L^\infty}\|\pa_\theta^2 u\|_{L^2_g}^2 + \|\pa_\theta^2 u\|_{L^2_g},\cr
\frac{d}{dt}\|\pa_\theta^3 u\|_{L^2_g}^2 &\ls \|\pa_\theta u \|_{L^\infty}\|\pa_\theta^3 \bar u\|_{L^2_g}\|\pa_\theta^3 u\|_{L^2_g} + \|\pa_\theta^3 \bar u\|_{L^2_g}\|\pa_\theta^2 u\|_{L^4_g}^2 \cr
&\quad + \|\pa_\theta \bar u \|_{L^\infty}\|\pa_\theta^3 u\|_{L^2_g}^2 + \|\pa_\theta^3 u\|_{L^2_g} \cr
&\ls \|\pa_\theta u \|_{L^\infty}\|\pa_\theta^3 \bar u\|_{L^2_g}\|\pa_\theta^3 u\|_{L^2_g} + \|\pa_\theta \bar u \|_{L^\infty}\|\pa_\theta^3 u\|_{L^2_g}^2 + \|\pa_\theta^3 u\|_{L^2_g},
\end{aligned}$$
where we used the assumption $\int_\R \Omega^2 g(\Omega)\,d\Omega < \infty$ in \eqref{condi_g} and the following inequality:
\[
\| \pa_\theta^2 u\|_{L_g^4} \lesssim  \| \pa_\theta u\|_{L^\infty}^{1/2} \| \pa_\theta^3 u\|_{L_g^2}^{1/2}.
\] 
This asserts
\begin{align}\label{est_u3}
\begin{aligned}
\frac{d}{dt}\|u\|_{H^3_g} &\leq C\|\pa_\theta \bar u \|_{L^\infty}\|u\|_{H^3_g} + C\|\pa_\theta u\|_{L^\infty}\|\pa_\theta^2 \bar u\|_{H^2_g} + C\cr
&\leq CM\|u\|_{H^3_g} + CM\|\pa_\theta u_0\|_{L^\infty} e^{CMT} + CM(e^{CMT} - 1) + C.
\end{aligned}
\end{align}
Here we used the estimate \eqref{est_dtheta} and the assumption \eqref{asp_bu}. Applying Gr\"onwall's lemma to \eqref{est_u3} gives
\[
\|u(\cdot,\cdot,t)\|_{H^3_g} \leq \|u_0\|_{H^3_g} e^{CMT} + C\lt(\|\pa_\theta u_0\|_{L^\infty} e^{CMT} + (e^{CMT} - 1) + \frac1M\rt)(e^{CMT} -1).
\]
By combining this with \eqref{est_rhoinf},  \eqref{est_rhoh2}, and \eqref{est_dtheta}, we have
$$\begin{aligned}
&\sup_{0 \leq t \leq T} \lt(\|\rho(\cdot,\cdot,t)\|_{H^2_g} + \|\rho(\cdot,\cdot,t)\|_{L^\infty} + \|u(\cdot,\cdot,t)\|_{H^3_g} + \|\pa_\theta u(\cdot,\cdot,t)\|_{L^\infty} \rt)  \cr
&\quad \leq \lt(\|\rho_0\|_{L^\infty} + \|\rho_0\|_{H^2_g} + \|\pa_\theta u_0\|_{L^\infty} + \|u_0\|_{H^3_g} \rt)e^{CMT} \cr
&\qquad + C\lt(\|\pa_\theta u_0\|_{L^\infty} e^{CMT} + (e^{CMT} - 1) + \frac1M + \|\rho_0\|_{L^\infty} e^{CMT} + 1\rt)(e^{CMT} -1)\cr
&\leq Ne^{CMT} + C\lt(\|\pa_\theta u_0\|_{L^\infty} e^{CMT}+ \|\rho_0\|_{L^\infty} e^{CMT}  + 1\rt)(e^{CMT} -1).
\end{aligned}$$
We finally choose $T>0$ small enough such that the right hand side of the above inequality is less than $M$. We then deal with the approximations for the system \eqref{hydro_Ku}:
\begin{align}\label{app_euler2}
\begin{aligned}
&\pa_t \rho^{n+1} + \pa_\theta(\rho^{n+1} u^n) = 0, \quad (\theta,\Omega) \in \T \times \R, \quad t> 0,\cr
&\rho^{n+1} \pa_t u^{n+1} +  \rho^{n+1} u^n \pa_\theta u^{n+1}\cr
&\qquad  =   -\rho^{n+1} u^{n+1} +  \rho^{n+1} \Omega + \rho^{n+1}\int_{\T \times \R} \sin(\theta_* - \theta)\rho^{n+1}(\theta_*,\Omega_*,t)g(\Omega_*)\,d\theta_* d\Omega_*
\end{aligned}
\end{align}
with the initial data and the first iteration step given by
\[
(\rho^n(\theta,\Omega,t), u^n(\theta,\Omega,t))|_{t = 0} = (\rho_0(\theta,\Omega), u_0(\theta,\Omega)), \quad n \geq 1, \quad (\theta,\Omega) \in \T \times \R,
\]
and
\[
(\rho^0(\theta,\Omega,t), u^0(\theta,\Omega,t)) = (\rho_0(\theta,\Omega), u_0(\theta,\Omega)), \quad (\theta,\Omega,t) \in \T \times \R \times \R_+.
\]
For the system \eqref{app_euler2}, we use the previous argument to get
\bq\label{est_uni}
\sup_{0 \leq t \leq T_0} \sup_{n \in \N}\lt(\|\rho^n(\cdot,\cdot,t)\|_{H^2_g} + \|\rho^n(\cdot,\cdot,t)\|_{L^\infty} + \|u^n(\cdot,\cdot,t)\|_{H^3_g} + \|\pa_\theta u^n(\cdot,\cdot,t)\|_{L^\infty} \rt) < M,
\eq
for some $T_0 > 0$. We next show that $(\rho^n, u^n)$ is a Cauchy sequence in $\mc([0,T_0]; L^2_g(\T \times \R)) \times \mc([0,T_0]; H^1_g(\T \times \R))$. For this, we introduce the following simplified notations:
\[
\rho^{n+1,n} := \rho^{n+1} - \rho^n \quad \mbox{and} \quad u^{n+1,n} := u^{n+1} - u^n \quad \mbox{for} \quad n \in \N \cup \{0\}.
\]
Then straightforward computations yield
$$
\begin{aligned}
\frac{d}{dt}\|\rho^{n+1,n}\|_{L^2_g}^2 &\ls \|\pa_\theta u^n\|_{L^\infty}\|\rho^{n+1,n}\|_{L^2_g}^2 + \|\rho^n\|_{L^\infty}\|\rho^{n+1,n}\|_{L^2_g}\|\pa_\theta u^{n,n-1}\|_{L^2_g}\cr
&\quad + \|\rho^{n+1,n}\|_{L^2_g}\|\pa_\theta \rho^n\|_{L^4_g}\|u^{n,n-1}\|_{L^4_g}\cr
&\ls \|\rho^{n+1,n}\|_{L^2_g}^2 + \|\rho^{n+1,n}\|_{L^2_g}\|\pa_\theta u^{n,n-1}\|_{L^2_g} +\|\rho^{n+1,n}\|_{L^2_g}\|u^{n,n-1}\|_{L^4_g},
\end{aligned}
$$
where we used
\[
\|\pa_\theta \rho^n\|_{L^4_g} \ls \|\rho^n\|_{L^\infty}^{1/2}\|\pa_\theta^2 \rho^n \|_{L^2_g}^{1/2} \ls 1,
\]
due to \eqref{est_uni}. This asserts
\bq\label{cau_rho2}
\frac{d}{dt}\|\rho^{n+1,n}\|_{L^2_g} \ls \|\rho^{n+1,n}\|_{L^2_g} + \|\pa_\theta u^{n,n-1}\|_{L^2_g}  + \|u^{n,n-1}\|_{L^4_g}.
\eq
We next estimate $\|u^{n+1,n}\|_{L^4_g}$ and $\|\pa_\theta u^{n+1,n}\|_{L^2_g}$. It follows from the momentum equation in \eqref{app_euler2} that
$$\begin{aligned}
&\pa_t u^{n+1,n} + u^{n,n-1}\pa_\theta u^{n+1} + u^{n-1} \pa_\theta u^{n+1,n}\cr
&\quad = -u^{n+1,n} + \int_{\T \times \R} \sin(\theta_* - \theta)\rho^{n+1,n}(\theta_*,\Omega_*,t)g(\Omega_*)\,d\theta_* d\Omega_*.
\end{aligned}$$
Thus we find
$$\begin{aligned}
&\frac{d}{dt}\int_{\T \times \R} |u^{n+1,n}|^4 g\,d\theta d\Omega\cr
&\quad = -4\int_{\T \times \R} (u^{n+1,n})^3 \lt(  u^{n,n-1}\pa_\theta u^{n+1} + u^{n-1} \pa_\theta u^{n+1,n} + u^{n+1,n}\rt)g\,d\theta d\Omega\cr
&\qquad + 4\int_{\T^2 \times \R^2}  (u^{n+1,n}(\theta,\Omega,t))^3 \sin(\theta_* - \theta)\rho^{n+1,n}(\theta_*,\Omega_*,t)g(\Omega_*) g(\Omega)\,d\theta_* d\Omega_* d\theta d\Omega\cr
&\quad \leq 4\|\pa_\theta u^{n+1}\|_{L^\infty}\|u^{n+1,n}\|_{L^4_g}^3\|u^{n,n-1}\|_{L^4_g} + 4\|\pa_\theta u^{n-1}\|_{L^\infty}\|u^{n+1,n}\|_{L^4_g}^4\cr
&\qquad -4\|u^{n+1,n}\|_{L^4_g}^4 + 4\|u^{n+1,n}\|_{L^4_g}^3\|\rho^{n+1,n}\|_{L^2_g}\cr
&\quad \ls \|u^{n+1,n}\|_{L^4_g}^3\|u^{n,n-1}\|_{L^4_g} + \|u^{n+1,n}\|_{L^4_g}^4 + \|u^{n+1,n}\|_{L^4_g}^3\|\rho^{n+1,n}\|_{L^2_g},
\end{aligned}$$
where we used \eqref{condi_g}. This gives
\bq\label{cau_u2}
\frac{d}{dt}\|u^{n+1,n}\|_{L^4_g} \ls \|u^{n,n-1}\|_{L^4_g}  + \|u^{n+1,n}\|_{L^4_g} + \|\rho^{n+1,n}\|_{L^2_g}.
\eq
We also use the similar argument as above to estimate
$$\begin{aligned}
&\frac12\frac{d}{dt} \int_{\T \times \R} (\pa_\theta u^{n+1,n})^2 g\,d\theta d\Omega\cr
&\quad = -\int_{\T \times \R} \pa_\theta u^{n+1,n} \lt(  \pa_\theta u^{n,n-1}\pa_\theta u^{n+1} + u^{n,n-1}\pa_\theta^2 u^{n+1} + \pa_\theta u^{n-1} \pa_\theta u^{n+1,n}\rt)g\,d\theta d\Omega\cr
&\qquad -\int_{\T \times \R} \pa_\theta u^{n+1,n} \lt(  u^{n-1} \pa_\theta^2 u^{n+1,n} + \pa_\theta u^{n+1,n}\rt)g\,d\theta d\Omega\cr
&\qquad  -\int_{\T^2 \times \R^2}  \pa_\theta u^{n+1,n}(\theta,\Omega,t) \cos(\theta_* - \theta)\rho^{n+1,n}(\theta_*,\Omega_*,t)g(\Omega_*) g(\Omega)\,d\theta_* d\Omega_* d\theta d\Omega\cr
&\quad \leq \|\pa_\theta u^{n+1}\|_{L^\infty}\|\pa_\theta u^{n+1,n}\|_{L^2_g}\|\pa_\theta u^{n,n-1}\|_{L^2_g} + \|\pa_\theta u^{n+1,n}\|_{L^2_g}\|u^{n,n-1}\|_{L^4_g}\|\pa_\theta^2 u^{n+1}\|_{L^4_g}\cr
&\qquad  + \|\pa_\theta u^{n-1}\|_{L^\infty}\|\pa_\theta u^{n+1,n}\|_{L^2_g}^2 - \|\pa_\theta u^{n+1,n}\|_{L^2_g}^2 + \|\pa_\theta u^{n+1,n}\|_{L^2_g}\|\rho^{n+1,n}\|_{L^2_g}\cr
&\quad \ls \|\pa_\theta u^{n+1,n}\|_{L^2_g}\|\pa_\theta u^{n,n-1}\|_{L^2_g} + \|\pa_\theta u^{n+1,n}\|_{L^2_g}\|u^{n,n-1}\|_{L^4_g}\cr
&\qquad  + \|\pa_\theta u^{n+1,n}\|_{L^2_g}^2 + \|\pa_\theta u^{n+1,n}\|_{L^2_g}\|\rho^{n+1,n}\|_{L^2_g}.
\end{aligned}$$
Here we used 
\[
\|\pa_\theta^2 u^{n+1}\|_{L^4_g} \ls \|\pa_\theta u^{n+1}\|_{L^\infty}^{1/2} \|\pa_\theta^3 u^{n+1}\|_{L^2_g}^{1/2} \ls 1.
\]
Thus we obtain
\[
\frac{d}{dt}\|\pa_\theta u^{n+1,n}\|_{L^2_g} \ls \|\pa_\theta u^{n,n-1}\|_{L^2_g} + \|u^{n,n-1}\|_{L^4_g} + \|\pa_\theta u^{n+1,n}\|_{L^2_g} + \|\rho^{n+1,n}\|_{L^2_g},
\]
and this together with \eqref{cau_rho2} and \eqref{cau_u2} yields
$$\begin{aligned}
&\|\rho^{n+1,n}(\cdot,\cdot,t)\|_{L^2_g} + \|u^{n+1,n}(\cdot,\cdot,t)\|_{L^4_g} + \|\pa_\theta u^{n+1,n}(\cdot,\cdot,t)\|_{L^2_g} \cr
&\quad \leq C\int_0^t \|\rho^{n+1,n}(\cdot,\cdot,s)\|_{L^2_g} + \|u^{n+1,n}(\cdot,\cdot,s)\|_{L^4_g} + \|\pa_\theta u^{n+1,n}(\cdot,\cdot,s)\|_{L^2_g} \,ds\cr
&\qquad + C\int_0^t \|u^{n,n-1}(\cdot,\cdot,s)\|_{L^4_g} + \|\pa_\theta u^{n,n-1}(\cdot,\cdot,s)\|_{L^2_g} \,ds,
\end{aligned}$$
for $t \in [0,T_0]$, where $C>0$ is independent of $n$. Since 
$$\begin{aligned}
\|u^{n+1,n}(\cdot,\cdot,t)\|_{L^2_g}^2 &= \int_{\T \times \R} |u^{n+1,n}|^2 g\,d\theta d\Omega \cr
&\leq \lt(\int_{\T \times \R} |u^{n+1,n}|^4 g\,d\theta d\Omega\rt)^{1/2}\lt(\int_{\T \times \R} g(\Omega) \,d\theta d\Omega\rt)^{1/2}\cr
& \leq C\|u^{n+1,n}(\cdot,\cdot,t)\|_{L^4_g}^2
\end{aligned}$$
this conclude that $(\rho^n, u^n)$ is a Cauchy sequence in $\mc([0,T_0]; L^2_g(\T \times \R)) \times \mc([0,T_0]; H^1_g(\T \times \R))$. The rest part of the proof is almost the same with Steps 3 - 5 in the proof of Theorem \ref{thm_local}. This completes the proof.
%
%
%
%

\section*{Acknowledgments}
YPC was supported by National Research Foundation of Korea(NRF) grant funded by the Korea government(MSIP) (No. 2017R1C1B2012918) and POSCO Science Fellowship of POSCO TJ Park Foundation. 

%
%
%
%

\end{document}